\documentclass[a4paper,11pt,reqno]{amsart}


\usepackage{amsmath,amssymb,amscd,amsthm,amsxtra, esint,mathtools}
\usepackage{mathabx}

\usepackage{mathrsfs} 
\usepackage{enumerate}
\usepackage{times}
\usepackage{bbold}
\allowdisplaybreaks

\usepackage[implicit=true]{hyperref}

\usepackage{color}

\let\emptyset \undefined
\newsymbol\emptyset    203F

%
%


\theoremstyle{plain}
\newtheorem{theorem}{Theorem}[section]
\newtheorem{corollary}[theorem]{Corollary}
\newtheorem{lemma}[theorem]{Lemma}
\newtheorem{proposition}[theorem]{Proposition}

\theoremstyle{remark}

\numberwithin{equation}{section}

\newcommand\numberthis{\stepcounter{equation}\tag{\theequation}}


\newcommand{\E}{\mathbb{E}}
\newcommand{\N}{\mathbb{N}}
\newcommand{\C}{\mathbb{C}}
\renewcommand{\P}{{\mathbb P}}
\newcommand{\R}{\mathbb{R}}

\newcommand{\Z}{\mathbb{Z}}
\newcommand{\1}{\mathbb 1}
\newcommand{\wt}{\widetilde}
\newcommand{\ta}{\theta}
\newcommand{\conj}{\overline}
\newcommand{\CGN}{C_{\mathrm{GNS}}}


\newcommand{\norm}[1]{\left\|#1\right\|}


\newcommand{\T}{\mathbb{T}}

\newcommand{\D}{\mathcal D}

\newcommand{\NN}{\mathbf{N}}


\newcommand{\eps}{\varepsilon}
\newcommand{\ph}{\varphi}
\renewcommand{\leq}{\leqslant}
\renewcommand{\geq}{\geqslant}
\newcommand{\les}{\lesssim}

\newcommand{\dl}{\delta}
\renewcommand{\H}{{\mathbb H}^1_a}


\DeclareMathOperator{\loc}{loc}
\DeclareMathOperator{\supp}{supp}
\DeclareMathOperator{\Law}{Law}

\newcommand{\jb}[1]{\langle #1 \rangle}

\let\Re\relax
\DeclareMathOperator{\Re}{Re}

\let\Im\relax
\DeclareMathOperator{\Im}{Im}


\usepackage{marginnote}

\begin{document}

\title[Phase transitions for focusing NLS]
{Phase transition for invariant measures of the focusing Schr\"odinger equation}

\author{Leonardo Tolomeo}
\address{Leonardo Tolomeo, University of Edinburgh}
\email{l.tolomeo@ed.ac.uk}

\author{Hendrik Weber}
\address{Hendrik Weber, University of M\"unster
}
\email{hendrik.weber@uni-muenster.de}





 \begin{abstract}
We consider the Gibbs measure for the focusing nonlinear Schr\"odinger equation on the one-dimensional torus $\mathbb T$, that was introduced in a seminal paper by Lebowitz, Rose and Speer (1988). We show that in the large torus limit, the measure exhibits a phase transition, depending on the size of the nonlinearity. This phase transition was originally conjectured on the basis of numerical simulation by Lebowitz, Rose and Speer (1988). Its existence is however striking in view of a series of negative results by McKean (1995) and Rider (2002). 
\end{abstract}


\date\today

\maketitle

\section{Introduction}
\label{s:Intro}

In this paper, we continue the study of the focusing Gibbs measures
for the nonlinear Schr\"odinger equations (NLS), 
initiated in the seminal papers by Lebowitz, Rose, and Speer~\cite{Lebowitz1988}.
A Gibbs measure $\rho$ 
denotes a probability measure on 
functions\,/\,distributions with formal density:
\begin{align*}
d\rho = Z^{-1} e^{- H(u)} d u , 
\end{align*}
\noindent
where  $H(u)$ is a Hamiltonian functional and $Z$ is a normalisation constant, called partition function. 
We will focus our attention to the focusing nonlinear Schr\"odinger equation (NLS), 
\begin{align}
i u_t +  \Delta u + \beta |u|^{p-2} u = 0, 
\label{NLS1}
\end{align}
for which the Hamiltonian $H(u)$ is given by 
\begin{align}
H(u) = \frac 12\int |\nabla u|^2 d x - \frac \beta p\int |u|^p d x.
\label{Gibbs2}
\end{align}
 The study of the equation~\eqref{NLS1}
 from the point of view of the (non-)equilibrium statistical mechanics 
 has received wide attention; see for example \cite{Lebowitz1988, Bo94, BO96, BO97,Tzv1,  Tzv2, LMW, BBulut, 
CFL, DNY2, Bring3}. See also \cite{BOP4} for a survey on the subject, more from the dynamical point of view.

The main difficulty in constructing the focusing Gibbs measures
comes from the high degree $p>2$ of the negative term in the Hamiltonian \eqref{Gibbs2}.
This makes the problem extremely different from the defocusing case, which would correspond to the well studied $\Phi^p$ model of quantum field theory. In \cite{Lebowitz1988}, Lebowitz, Rose, and Speer suggested
to consider the Gibbs measure with an extra $L^2$-cutoff 
\begin{align}
d\rho = Z_\NN^{-1} e^{- H(u)} \1_{\{M(u) \le \NN\}} d u,
\label{Gibbs3}
\end{align}
where $M(u)$ denotes the mass functional 
\begin{equation}
M(u) = \int |u|^2. \label{mass}
\end{equation}
This choice is suitable for the study of the statistical mechanics of NLS, since the mass $M(u)$ is conserved by the flow of NLS, and represents the ``generalised number of particles". 

Our main goal in this paper is to study the Gibbs measure \eqref{Gibbs3}
on the one-dimensional torus with size $L$
$$L\T  = \R/(L\Z) \cong [-L/2, L/2],$$
and describe its behaviour in the large-torus limit $L \to \infty$, depending on the parameter $\beta$. More precisely, the main result of this paper is the following. 
\begin{theorem}\label{thm:list}
Let $2 < p < 6$, $\alpha, \NN > 0$, and consider the measure $\rho_L$ formally given by
\begin{align}
d\rho_L(u) = \frac{1}{Z_{\NN,L}} \exp\Big(\frac{\beta}{L^\gamma} \int_{L\T} |u|^p - \frac12 \int_{L\T} |\partial_x u|^2 - \frac\alpha2 \int_{L\T} |u|^2 \Big) \1_{\{M(u) \le \NN L\}} d u.
\label{Gibbs}
\end{align}
Then the following statements hold. 
\begin{itemize}
\item[\textup(i)] \textup{(supercritical case)} Let $\gamma < \frac p2-1$. Then the measure $\rho_L$ converges to the probability measure $\delta_{u=0}$. More precisely, one has the following concentration phenomenon. There exists a function $Q_p \in S(\R)$, depending only on the parameters $\beta, \NN$, and a rescaling $Q_p^L$ of $Q_p$, such that for an appropriate norm, one has
$$ \rho_L\Big( \Big\{ \inf_{x_0\in \R, \ta \in [0,2\pi]} \big \| u - e^{i\ta}L^\frac 12 Q_p^L(\cdot - x_0)\big\| > o(1)\Big) = o(1), $$
and $ L^\frac 12 Q_p^L $ converges to $0$ in distribution.
\item[\textup{(ii)}] \textup{(critical case, strong nonlinearity)} Let $\gamma = \frac p2-1$. Then there exists $\beta_1 = \beta_1(\alpha, p, \NN) \gg 1$ such that for every $\beta > \beta_1$, no limit point of $\rho_L$ as $L\to \infty$ is equal 
to the Ornstein-Uhlenbeck measure $\mu_{OU}$ on $\R$ with mass parameter $\alpha$, formally given by 
\begin{equation}
d\mu_{OU}(u) = \frac{1}{Z} \exp\Big(- \frac12 \int_{\R} |\partial_x u|^2 - \frac\alpha2 \int_{\R} |u|^2 \Big) \Big) du.
\end{equation}
\item[\textup{(iii)}] \textup{(critical case, weak nonlinearity)} Suppose that $\NN > \frac{1}{2\sqrt\alpha}$, and $\gamma = \frac p2 -1$. Then there exists $\beta_0 = \beta_0(\alpha, p, \NN) \ll 1$ such that for each $\beta < \beta_0$, $\rho_L$ weakly converges 
to  $\mu_{OU}$. 
%
\item
[\textup{(iv)}] \textup{(subcritical case)} Suppose that $\NN > \frac{1}{2\sqrt\alpha}$. For $\gamma > \frac p2 -1$, the measure $\rho_L$ weakly converges to same measure $\mu_{OU}$.
\end{itemize}
In all situations above, the convergence is to be understood  as weak convergence in the space of probability measures over distributions over $\R$, where we see distributions over $L\T \cong [-L/2,L/2]$ as distributions over $\R$ via their periodic extension.
\end{theorem}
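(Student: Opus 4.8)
The plan is to regard $\rho_L$ as a Gibbs perturbation of a Gaussian. Let $\mu_L$ denote the periodic Ornstein--Uhlenbeck measure on $L\T$ with covariance $(\alpha-\Delta)^{-1}$ (formal density $\propto\exp(-\tfrac12\int_{L\T}|\nabla u|^2-\tfrac{\alpha}{2}\int_{L\T}|u|^2)$); since $d=1$ and $p<\infty$, $\mu_L$ is supported on continuous functions and $\int_{L\T}|u|^p$ needs no renormalisation, so $\rho_L$ is genuinely $Z_{\NN,L}^{-1}\exp(\tfrac{\beta}{L^\gamma}\int_{L\T}|u|^p)\1_{\{M(u)\le\NN L\}}\,d\mu_L$, and everything reduces to (a) the $L\to\infty$ asymptotics of $\log Z_{\NN,L}$ and (b) the concentration of $\rho_L$. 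I would use three inputs: the Bou\'e--Dupuis variational formula $\log Z_{\NN,L}=\sup_{w}\E\big[\tfrac{\beta}{L^\gamma}\int_{L\T}|Y+w|^p-\tfrac12\|w\|_{H^1_\alpha}^2\big]$ (supremum over adapted drifts $w$ with $M(Y+w)\le\NN L$ a.s., $Y\sim\mu_L$, $\|w\|_{H^1_\alpha}^2:=\int|\nabla w|^2+\alpha\int|w|^2$); the ergodicity and exponential mixing of the infinite-volume OU process, which give $\tfrac1L\int_{L\T}|u|^p\to c_p$ and $\tfrac1L M(u)\to\tfrac{1}{2\sqrt\alpha}$ under $\mu_L$ with Gaussian-type large deviations; and the sharp Gagliardo--Nirenberg inequality $\|u\|_{L^p}^p\le C_{GN}\|u\|_{L^2}^{p/2+1}\|\nabla u\|_{L^2}^{p/2-1}$ with its optimisers $Q_p$. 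In the variational formula I would split $\int|Y+w|^p$ into a Gaussian-bulk part (which, using concentration of $\int_{L\T}|Y|^p$, contributes only the lower-order deterministic $\beta c_pL^{1-\gamma}$) and a drift part estimated via Gagliardo--Nirenberg with $\|w\|_{L^2}^2\lesssim M(Y+w)+M(Y)\lesssim\NN L$; inserting $w=A\lambda^{1/2}Q_p(\lambda(\cdot-x_0))$ and optimising over $(A,\lambda)$ collapses the problem to $\sup_{0\le m\le\NN L}\big(c_1(\beta L^{-\gamma})^{4/(6-p)}m^{(p+2)/(6-p)}-\tfrac{\alpha}{2}m\big)$ with explicit $c_1=c_1(p,C_{GN})$. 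Because $(p+2)/(6-p)>1$ (as $p>2$), this function of $m$ first decreases then increases, so its maximiser is $m=0$ or $m=\NN L$; and the exponent of $L$ in the value at $m=\NN L$ is $\tfrac{p+2-4\gamma}{6-p}$, which is $>1$, $=1$, $<1$ according as $\gamma<\tfrac{p}{2}-1$, $\gamma=\tfrac{p}{2}-1$, $\gamma>\tfrac{p}{2}-1$. This trichotomy, together with the sign of $c_1\beta^{4/(6-p)}\NN^{(p+2)/(6-p)}-\tfrac{\alpha}{2}\NN$ in the critical case, drives all four claims.

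\textbf{Convergence to $\mu_{OU}$ (cases (iii), (iv)).} For $\gamma>\tfrac{p}{2}-1$, or $\gamma=\tfrac{p}{2}-1$ with $\beta<\beta_0$ chosen so that $c_1\beta^{4/(6-p)}\NN^{(p+2)/(6-p)}<\tfrac{\alpha}{2}\NN$, the value at $m=\NN L$ is non-positive, so the supremum sits at $m=0$: then $\log Z_{\NN,L}=\beta c_pL^{1-\gamma}+o(L)=o(L)$, the optimal drift is $o(1)$ in $H^1_\alpha$ and, after a quantitative refinement, $o(1)$ in every local norm, so no bump beats the Gaussian bulk. Since $\NN>\tfrac{1}{2\sqrt\alpha}$ keeps the mass cutoff inactive, a Cameron--Martin comparison then shows that $\rho_L$ and $\mu_L$ are asymptotically indistinguishable on functionals of $\langle u,\varphi\rangle$, $\varphi\in C_c^\infty(\R)$, and $\mu_L\to\mu_{OU}$ in the claimed topology. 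The one delicate point is that, in the critical case, dominating the large-deviation upper tail of $\tfrac1L\int_{L\T}|u|^p$ by Gagliardo--Nirenberg must be carried out after truncating to frequencies $\lesssim N$ and letting $N\to\infty$ slowly, since $Y\notin H^1$; the smallness of $\beta$ is exactly what makes this estimate close.

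\textbf{Soliton regime (cases (i), (ii)).} Here the value at $m=\NN L$ is positive (for $\gamma<\tfrac{p}{2}-1$ always; for $\gamma=\tfrac{p}{2}-1$ once $\beta>\beta_1$), and applying the Cameron--Martin formula to the optimiser gives the lower bound on $\log Z_{\NN,L}$. The harder half is the matching upper bound and the concentration of $\rho_L$ on the manifold $\{e^{i\theta}A_*\lambda_*^{1/2}Q_p(\lambda_*(\cdot-x_0))\}$, which I would obtain from a concentration--compactness / profile decomposition carried out on $L\T$: any configuration of non-negligible $\rho_L$-weight must, after a modulation and translation, be $O(L^{-\delta})$-close (on the relevant scale) to the optimal profile, plus a remainder of mass $o(L)$, with $x_0$ asymptotically uniform. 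In case (i) the soliton sits at a uniformly random location, so as a random distribution on $\R$ it tends to $0$ — this yields the stated concentration phenomenon together with $\rho_L\to\delta_{u=0}$. In case (ii) the soliton carries mass $\approx\NN L$, so for $\beta>\beta_1\gg1$ it depletes the remainder's mass below $\tfrac{1}{2\sqrt\alpha}L$; since the soliton meets $\supp\varphi$ only with probability $O(1/L)$, $\langle u,\varphi\rangle$ under $\rho_L$ is, with probability $1-o(1)$, a centred Gaussian of variance bounded away from its $\mu_{OU}$-value, so $\E_{\rho_L}[\cos\langle u,\varphi\rangle]\not\to\E_{\mu_{OU}}[\cos\langle u,\varphi\rangle]$ and no subsequential weak limit of $\rho_L$ equals $\mu_{OU}$.

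\textbf{Main obstacle.} The hard part will be the rigidity in cases (i)--(ii) — and, symmetrically, ruling out \emph{all} bumps (not just the optimal one) when $\beta<\beta_0$ in case (iii): one must push a concentration--compactness argument through in the random, infinite-volume setting, uniformly in $L$, keeping track of how the optimal profile interacts — through both the $L^2$-cutoff and the $\int|u|^p$ term — with the rough Gaussian field it perturbs. Since that field is not small in any global norm, the profile decomposition must be localised and married to large-deviation bounds for $\mu_L$, and the failure of $Y\in H^1$ forces a multi-scale, frequency-truncated handling of Gagliardo--Nirenberg throughout.
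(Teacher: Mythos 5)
Your overall plan — Bou\'e--Dupuis plus a Gagliardo--Nirenberg reduction to a one-parameter family of profiles, then concentration--compactness for the rigidity — is the right family of ideas and does match the paper's strategy in the supercritical case. However, there are two genuine gaps, both concentrated in the critical case $\gamma=\tfrac p2-1$.

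\textbf{The scalar reduction is wrong at criticality.} You collapse the variational problem to
$\sup_{0\le m\le\NN L}\bigl(c_1(\beta L^{-\gamma})^{4/(6-p)}m^{(p+2)/(6-p)}-\tfrac\alpha2 m\bigr)$
and then read off the phase transition from the sign of $c_1\beta^{4/(6-p)}\NN^{(p+2)/(6-p)}-\tfrac\alpha2\NN$. This treats the Gaussian bulk as contributing only the deterministic $\beta c_p L^{1-\gamma}$ and the drift--Gaussian cross term as negligible. In the supercritical case this is legitimate because the leading term $L^{(p+2-4\gamma)/(6-p)}$ dominates $L$. But when $\gamma=\tfrac p2-1$ all three scales coincide at $L$: if $\ph(1)+V(1)$ is to look like a soliton of mass $\approx\NN L$, then (because $Q_p$ decays rapidly) $\bigl|\int\ph(1)\overline{V(1)}\bigr|\gtrsim L$, hence $\alpha\int|V|^2\gtrsim L$, of the same order as the term you keep. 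The paper's heuristic makes exactly this point, and it is why Theorem~\ref{thm:list} only asserts separated thresholds $\beta_0\ll1\ll\beta_1$ and not a sharp $\beta_c$. Your criterion would produce a sharp threshold determined by $(c_1,\NN,\alpha)$ — a stronger statement than what is proved, and one the cross term actually spoils. Accordingly the paper's Theorem~\ref{thm:2b} (the strongly nonlinear critical case) only recovers the soliton asymptotics and concentration in the limit $\beta\to\infty$, with error $o(1)$ in $\beta$, precisely because the cross term cannot be absorbed for fixed large $\beta$.

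\textbf{Cameron--Martin comparison is not enough for (iii)/(iv).} You argue that once the optimal drift is $o(1)$ in $H^1_\alpha$, a Cameron--Martin comparison shows $\rho_L$ and $\mu_L$ are asymptotically indistinguishable on local observables. But the Radon--Nikodym density $\tfrac{d\rho_L}{d\mu_L}$ does \emph{not} converge to $1$ in $L^1(\mu_L)$: as the paper notes, Jensen plus translation invariance gives $Z_L\ge\exp(cL^{1-\gamma})(1+o(1))\to\infty$ whenever $\gamma\le1$, so the density is globally far from $1$ even when the optimal drift is small. The convergence of the \emph{local} marginals requires a different mechanism: the paper splits the exponent into a near-field part on $[-L^\theta/2,L^\theta/2]$ and a far-field part, proves the near-field density converges to $1$ in $L^1(\mu_L)$ (Lemma~\ref{lemma:small_scale1}), and then shows the far-field weight is asymptotically \emph{independent} of the restriction $\ph|_{[-K,K]}$ by localizing the Bou\'e--Dupuis drift and exploiting the exponential screening of $(\alpha-\Delta)^{-1}$ (Lemmas~\ref{lemma: large_scale}--\ref{lemma: small_scale2b}), followed by a careful exchange of the cutoff on $M(\ph)$ versus $M_\theta(\ph)$ and the removal of $\theta$ via translation invariance (Lemmas~\ref{lemma: eps_removal}--\ref{lemma: theta_removal2}). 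None of this structure is present in your sketch, and without it the step from ``drift is small'' to ``local marginals converge'' does not go through.

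Your treatment of case (i) (random soliton location $\Rightarrow$ local $L^1$ mass small $\Rightarrow$ weak limit $\delta_0$) is the right mechanism and matches the paper's Corollary~\ref{triviality}. Your case (ii) argument about a ``depleted remainder with variance bounded away from the OU value'' is not quite what the paper uses; it instead shows $\int_{-M}^M|\ph|$ is small with high $\rho_L$-probability, which directly contradicts any subsequential limit being $\mu_{OU}$.
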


A more precise version of statements (i)-(iv) will be provided in what follows. In particular 
\begin{itemize}
\item Theorem \ref{thm:list}, (i) follows from Theorem \ref{thm:2} and Corollary \ref{triviality}, (1).
\item Theorem \ref{thm:list}, (ii) is expanded upon in Theorem \ref{thm:2b} and follows from Corollary \ref{triviality}, (2).
\item Theorem \ref{thm:list}, (iii) and (iv) are the content of Theorem \ref{thm:3}.
\end{itemize}

In this context, the choice of imposing a mass threshold proportional to the size $L$ of the torus is fairly natural, due to the translation-invariance of the measure. 
Moreover, regarding the convergence to the Ornstein-Uhlenbeck process in (iii) and (iv), the restriction $\NN > \frac{1}{2\sqrt\alpha}$ seems to be substantial, since one has that 
$$ \mu_{OU}\Big(\Big\{ \Big|\int_{-L/2}^{L/2} |u|^2 - \frac{L}{2\sqrt\alpha}\Big| \gtrsim L \Big)\Big\} \to 0$$
exponentially fast as $L \to \infty$, see \eqref{LDE}. Therefore, for $\NN < \frac{1}{2\sqrt\alpha}$, we do not expect convergence to the Ornstein-Uhlenbeck measure (with the same $\alpha$), and that the behaviour of the measure $\rho_L$ to be dominated by mass cutoff, even (slightly) above the parameter $\beta_0$.  

The most striking result in Theorem \ref{thm:list} is the phase transition happening at the critical case. The existence and description of this phase transition has a somewhat troubled history. On the basis of numerical simulations, in their original work, Lebowitz, Rose, and Speer suggested that such a transition might occur, depending on the particular value of a certain quantity, see \cite[Section 3.3]{Lebowitz1988}. 
However, in our context, this quantity corresponds exactly to $(\NN L)^{-1}$, which in view of Theorem \ref{thm:list}, it is not the critical quantity that determines the phase transition.\footnote{In order to relate the quantities of the two papers, one just needs to make the change of variable $u = T^\frac12 v$.} 
This problem was then tackled by McKean in \cite{McKean1995}, who considered the case $\gamma = 0$, $p=4$. However, his proof contained a mistake, and provided an incorrect result. 
Finally, the problem with $\gamma=0, p=4$ was solved by Rider \cite{doi:10.1002/cpa.10043}, who showed that no phase transition occurs and that the measure converges to the trivial measure $\delta_{u=0}$, by showing a slightly weaker version of concentration phenomenon provided by  Theorem \ref{thm:list}, (i). 

\subsection{Construction of the measure and variational formulation}\label{Sec:GaussianMeasure}
Consider the torus $L\T$, and recall that the Gaussian measure with inverse covariance $\alpha-\Delta$, i.e.\ the measure formally given by 
\begin{equation*}
\mu_L =  \frac{1}{Z} \exp\Big( -\frac12 \int_{L\T} |\partial_x u|^2 - \frac{\alpha}2 \int |u|^2\Big) du d\overline u,
\end{equation*}
can be seen as the law of the random series 
\begin{equation}\label{random_fourier0}
u(x) = \sum_{k\in \Z} \frac{g_k}{\sqrt{\alpha + \lambda_k^2}} e_k(x),
\end{equation}
where $g_k$ are i.i.d., standard complex-valued Gaussian random variables, and $\{e_k\}_{k\in \Z}$ is an $L^2$-orthonormal basis for the Laplacian $-\Delta$ with eigenvalues $\lambda_k^2$. By identifying $L\T$ with the interval $[-L/2,L/2]$ 
and imposing periodic boundary conditions, we see that 
\begin{gather*}
e_k(x) = \frac{1}{\sqrt L} \exp\big(2\pi i \frac{kx}{L} \big),\\
\lambda_k^2 = 4\pi^2 \frac{k^2}{L^2}.
\end{gather*}
Therefore, \eqref{random_fourier0} becomes 
\begin{equation}\label{random_fourier}
u(x)  = \sum_{k\in\Z} \frac{g_k}{\sqrt{\alpha + 4\pi^2\big(\frac{k}{L}\big)^2}} \frac{e^{i\frac{2\pi kx}{L}}}{\sqrt L}.
\end{equation}
Therefore, the measure \eqref{Gibbs} can be expressed as 
\begin{equation}\label{rhoLdef}
\rho_L = \frac{1}{Z}\exp\Big(\frac{\beta}{L^\gamma} \int_{L\T} |u|^p\Big) \1_{\{M(u) \le L\NN\}} \mu_L,
\end{equation}
whenever the partition function 
\begin{equation} \label{partition_function}
 Z= Z(\beta,L,\NN) = \int \exp\Big(\frac{\beta}{L^\gamma} \int_{L\T} |u|^p\Big) \1_{\{M(u) \le L\NN\}} d\mu_L(u)
\end{equation}
is finite. With this definitions, the following result holds. 
\begin{theorem}[\cite{Lebowitz1988, Bo94, OST}]\label{thm:1}
Let $\gamma = 0$, and assume either
\begin{itemize}
\item[(i)] $p<6$ and $\NN >0$, or
\item[(ii)] $p=6$ and $\NN \le \NN_0$,
\end{itemize}
where 
\begin{equation}\label{N0}
\NN_0^2 = \frac{6}{2\beta \CGN}, 
\end{equation}
and $\CGN=\CGN(6)$ denotes the optimal constant in the Gagliardo-Nirenberg-Sobolev inequality  
$$ \int_{\R} |u|^6 \le \CGN M(u)^2 \int |\partial_x u|^2. $$
Then 
\begin{equation} \label{eqn: Zfinite}
Z(\beta,L,\NN) < \infty,
\end{equation}
otherwise 
\[
Z(\beta,L,\NN) = \infty.
\]
\end{theorem}
This result was first shown by Lebowitz, Rose and Speer in \cite{Lebowitz1988} with the exception of the threshold case $\NN = \NN_0$. However, the proof of the finiteness of the partition function contained a mistake, that was later corrected by Bourgain in \cite{Bo94} in the case $p<6$ or $p=6, \NN \ll 1$, 
and finally the case $p= 6$, $\NN \le \NN_0$ was proved by Oh, Sosoe and the first author in \cite{OST}. 
We point out that the threshold case $\NN = \NN_0$ is surprising due to the existence of finite-time blowup solutions of \eqref{NLS1} when $p=6$ and $M(u) = \NN_0$, and the proof of the boundedness of the partition function required a very precise decomposition of the Gaussian measure in a neighbourhood of the ``soliton manifold" $\{ Q_{\dl,x_0,\ta} \}_{\dl >0, x_0 \in \R, \ta >0}.$ We will not discuss the issue further here, and refer the interested readers to \cite{OST}.

The main technical novelty of this paper, is providing a different characterisation of the partition function \eqref{partition_function}, based on the Bou\'e-Dupuis variational formula introduced in \cite{BD}. 
Namely, this formula states that if $\mu$ is a function-valued Gaussian measure and $\ph(t)$ is a Brownian motion with $\Law(\ph(1)) = \mu$, and $F$ is a real-valued functional, then under appropriate hypotheses, 
\begin{align*}
&\log\Big(\int \exp\big(F(u)\big) d\mu_L(u)\Big) \\
&= \sup_{h:[0,1] \to H, \atop h \text{ progressively measurable }} \E\Bigg[ F\Big(\ph(1) +  \int_0^1 h(s) ds\Big) - \frac 12 \int_0^1 \|h(s)\|_{H}^2 ds\Bigg],
\end{align*}
where here $H$ denotes the Cameron-Martin space of the Gaussian measure $\mu$, and the filtration with respect to which $h$ needs to be progressively measurable is the natural filtration induced by $\ph$.
Via the change of variable 
$$ V(t) = \int_0^t h(s) ds, $$
and specialising the formula to the Gaussian measure $\mu_L$, this formula becomes 
\begin{equation} \label{BDintro}
\begin{aligned}
&\log\Big(\int \exp\big(F(u)\big) d\mu(u)\Big) \\
&= \sup_{V:[0,1] \to H, \atop V \text{ progressively measurable }} \E\Bigg[ F\Big(\ph(1) +  V(1)\Big) - \frac 12 \int_0^1 \| \partial_x \dot V(s)\|_{L^2}^2 + \alpha \| \dot V(s) \|_{L^2}^2 ds\Bigg].
\end{aligned}
\end{equation}
We will present the particular version of the Bou\'e-Dupuis formula that we are going to use in this paper in Proposition \ref{LEM:BD} and Proposition \ref{prop: BDinf}.

This formula has been first introduced in the context of constructing Gibbs-type measures in the work by Barashkov and Gubinelli \cite{barashkov2018variational}, who used it to give a new and very elegant proof for ultraviolet stability of the $\Phi^4_3$ model of quantum field theory. 
After this breakthrough, the formula immediately gained popularity, and it has since been used extensively to develop further understanding of these measures, see \cite{GOTW, CGW, OOT1, OOT2, OST2, FoTo, RSTW, barashkov2022variational, barashkov2022stochastic, barashkov2022multiscale, barashkov2023invariance, Bring, bringmann2022gibbs}. 

As a first step of this paper, we will show that a formula similar to \eqref{BDintro} holds in the context of the measures $\rho_L$. As a byproduct, this will provide an extremely simple proof of Theorem \ref{thm:1},\footnote{With the exception of the threshold case $\NN = \NN_0$.} see Proposition \ref{prop:1a} and Proposition \ref{prop:1b}. This analysis is performed in Section 3. 

\subsection{Supercritical case and strongly nonlinear critical case: concentration around a single soliton}

We now explain the source of the triviality 
in Theorem \ref{thm:list}, (i) and give a better description of Theorem \ref{thm:list}, (ii). Define the quantity 
\begin{equation} \label{min:eqn}
A(\beta,\NN) := \inf_{\norm{u}_{L^2(\R)}^2 \le \NN} - \frac \beta p \int |u|^p + \frac12 \int |\partial_xu|^2.
\end{equation}

We apply the formula \eqref{BDintro} to the functional  \footnote{The argument at the beginning of Section \ref{ss:MAIN_PROOF}  shows that the indicator function can be moved into the exponential. }
$$ F(u) = \1_{\{M(u)\le \NN L\}} \frac \beta {pL^\gamma} \int_{L\T} |u|^p.$$
This regime is characterised by the fact that  the term $\ph(1)$ can be treated as a perturbation.
Therefore, we obtain that  
\begin{align*}
&\log Z(\beta,L,\NN) \\
&\approx \sup_V \E\Bigg[ \1_{\{M(V(1))\le \NN L\}} \frac \beta {pL^\gamma} \int_{L\T} |V(1)|^p - \frac 12 \int_0^1 \| \partial_x \dot V(s)\|_{L^2}^2 + \alpha \| \dot V(s) \|_{L^2}^2 ds\Bigg] \\
&\approx \sup_V \Bigg( \1_{\{M(V)\le \NN L\}} \frac \beta {pL^\gamma} \int_{L\T} |V|^p -\frac12 \|\partial_x V\|_{L^2}^2 - \frac\alpha2 \|V\|_{L^2}^2\Bigg),
\intertext{and after applying the scaling $ V = L^\frac12 \lambda^{-\frac12} W(\lambda^{-1} \cdot)$ for $\lambda^{-1} = L^{\frac{p-2-2\gamma}{6-p}}$, we obtain for $L\gg1$,}
&\approx L^{\frac{p+2-4\gamma}{6-p}} \sup_W \Big(  \1_{\{M(W)\le \NN \}} \frac \beta p \int_\R |W|^p - \frac12 \|\partial_x W\|_{L^2}^2 \Big)\\
&= - L^{\frac{p+2-4\gamma}{6-p}} A(\beta,\NN).
\end{align*}
This suggests that, as long as the perturbation coming from $\ph$ can be seen as a small order perturbation, then the quantity $Z(\beta,L,\NN)$ has a very precise asymptotic, 
and the profile of the perturbation $V$ in \eqref{BDintro} should be close to a rescaled version of an optimiser for \eqref{min:eqn}. By making this argument rigorous, in the supercritical case, we obtain the following statement, that provides both a good asymptotic for the partition function $Z(\beta,L,\NN)$, and shows a concentration phenomenon for the measure $\rho_L$ around optimisers for \eqref{min:eqn}.
\begin{theorem}[Supercritical case]\label{thm:2}
Let $2 < p < 6$, and let $\alpha, \NN > 0$.  Let 
\begin{equation}
Z_L := \E\left[\exp\Big(\frac\beta {pL^\gamma} \int |\ph|^p\Big) \1_{\{M(\ph) \le \NN L\}}\right],
\end{equation}
on the torus of size $L$, with $\gamma < \frac p2 -1$. 
Here $\E$ denotes expectation with respect to the measure $\mu_L$.
Then 
\begin{equation}\label{thm2: asymptotic}
\lim_{L \to \infty} \frac{\log Z_L}{L^{\frac{p+2-4\gamma}{6-p}}} = -A(\beta,\NN).
\end{equation}
Moreover, let $Q_p \in H^1(\R)$ be the unique\footnote{Up to translations and multiplications by a unitary complex number.} optimiser for $A(\beta,\NN)$, and let $\delta > 0$. For fixed $q < \infty$, consider the strip
\begin{align*}
S_\delta := \Big\{ u: &\exists x_0 \in [-L/2, L/2] \text{ s.t. }\\
&\norm{L^{-\frac12} \lambda^\frac 12 u(\lambda (\cdot- x_0)) -Q_p}_{L^2 \cap L^q([-L/2,L/2])} < \dl, \lambda^{-1} = L^{\frac{p-2-2\gamma}{6-p}}\Big\},
\end{align*}
where $x-x_0$ is to be understood modulo $L$.
Then the measure concentrates on the strip $S_\delta$; more precisely,
\begin{equation} \label{concentration}
\lim_{L\to \infty} \frac{1}{Z_L} \E\left[\exp\big(\frac\beta {pL^\gamma} \int |\ph|^p\big) \1_{\{M(\ph) \le \NN L\}} \1_{\{\ph \not \in S_\delta \}}\right] = 0.
\end{equation}
\end{theorem}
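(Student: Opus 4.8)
The plan is to make rigorous the heuristic computation sketched above, in which the Boué--Dupuis formula \eqref{BDintro} reduces the asymptotics of $\log Z_L$ to the deterministic variational problem \eqref{min:eqn}. I would proceed in three stages: (a) the upper bound $\limsup L^{-\frac{p+2-4\gamma}{6-p}}\log Z_L \le -A(\beta,\NN)$; (b) the matching lower bound; (c) the concentration estimate \eqref{concentration}, which is essentially a quantitative refinement of the upper bound. Throughout, the scaling parameter is $\lambda = L^{\frac{p-2-2\gamma}{6-p}}$, and one checks that since $\gamma < \frac p2-1$ one has $\lambda \to \infty$ but $\lambda / L \to 0$, so a rescaled profile $L^{-1/2}\lambda^{1/2} u(\lambda\cdot)$ that lives at $O(1)$ scale in $L^2(\R)$ sits comfortably inside one period; this is the point where the hypothesis $\gamma < \frac p2-1$ enters.

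For the upper bound, apply \eqref{BDintro} with $F(u) = \1_{\{M(u)\le \NN L\}}\frac{\beta}{pL^\gamma}\int_{L\T}|u|^p$. Split the drift term $\ph(1)+V(1)$ and use that $\ph(1)$ is a fixed Gaussian of size $O(1)$ in the relevant norms while $V(1)$ carries the large mass; more precisely, bound $\int |\ph(1)+V(1)|^p$ by $(1+\eps)\int|V(1)|^p$ plus a remainder controlled by $\eps$ and moments of $\ph(1)$ using the Gagliardo--Nirenberg--Sobolev inequality together with the quadratic cost $\frac12\int_0^1 \|\partial_x\dot V\|_{L^2}^2 + \alpha\|\dot V\|_{L^2}^2\,ds \ge \frac12\|\partial_x V(1)\|_{L^2}^2 + \frac\alpha2\|V(1)\|_{L^2}^2$ (Jensen / Cauchy--Schwarz in $s$). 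After this, the supremum over $V$ is dominated by a deterministic supremum over a single function $W=V(1)$ subject to $M(W)\le \NN L$, and the rescaling $W = L^{1/2}\lambda^{-1/2}\wt W(\lambda^{-1}\cdot)$ turns this into $L^{\frac{p+2-4\gamma}{6-p}}$ times $\sup_{M(\wt W)\le\NN}\{\frac\beta p\int_\R|\wt W|^p - \frac12\|\partial_x\wt W\|_{L^2}^2\} + (\text{lower order})$, i.e.\ $-L^{\frac{p+2-4\gamma}{6-p}}A(\beta,\NN)$, after sending $\eps\to0$ at the end and absorbing the $\alpha$-term (which is lower order after rescaling since $\lambda\to\infty$) and the $\ph$-remainder (which is $o(L^{\frac{p+2-4\gamma}{6-p}})$ since $p<6$ gives a subcritical power of $\lambda$).

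For the lower bound, I would simply plug a good test drift into \eqref{BDintro}: take $V(1) = L^{1/2}\lambda^{-1/2} Q_p(\lambda^{-1}(\cdot - x_0))$ periodized (with $x_0$ arbitrary, say $0$), realized via a deterministic, hence trivially progressively measurable, path $\dot V(s) \equiv V(1)$. Then $M(V(1)) = \NN L \cdot \|Q_p\|_{L^2}^2 \le \NN L$ (replacing $Q_p$ by $(1-\eps)Q_p$ if the constraint in \eqref{min:eqn} is not saturated, at the cost of $\eps\to0$ later), the quadratic cost equals exactly $L^{\frac{p+2-4\gamma}{6-p}}(\frac12\|\partial_x Q_p\|_{L^2}^2 + o(1))$, and the nonlinear term is $L^{\frac{p+2-4\gamma}{6-p}}\frac\beta p\int|Q_p|^p$ up to the error from adding $\ph(1)$, which is controlled in expectation by the same GNS-plus-moments argument and is of lower order. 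Subtracting gives $-L^{\frac{p+2-4\gamma}{6-p}}A(\beta,\NN) + o(L^{\frac{p+2-4\gamma}{6-p}})$, as needed; one must also confirm that the indicator $\1_{\{M(\ph+V(1))\le\NN L\}}$ is satisfied with overwhelming probability, which follows from $M(\ph) = O(L)$ with exponential concentration (as in \eqref{LDE}) and taking the $(1-\eps)$ room above.

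Finally, for concentration, on the event $\{\ph \notin S_\delta\}$ one reruns the upper-bound computation but now with the drift $V$ constrained so that the rescaled profile $L^{-1/2}\lambda^{1/2}V(1)(\lambda\cdot)$ stays $\delta$-away in $L^2\cap L^q$ from every translate of $Q_p$; since $Q_p$ is (by the cited uniqueness) the \emph{unique} optimiser of \eqref{min:eqn} up to the symmetries quotiented out in $S_\delta$, the corresponding deterministic variational problem has value strictly below $-A(\beta,\NN)$, say $\le -A(\beta,\NN) - c(\delta)$ with $c(\delta)>0$ — this is a standard compactness/concentration-compactness argument for the constrained GNS functional, using $2<p<6$ for subadditivity and to rule out vanishing and dichotomy. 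Hence the restricted expectation in \eqref{concentration} is $\le \exp(-(A(\beta,\NN)+c(\delta))L^{\frac{p+2-4\gamma}{6-p}} + o(\cdot))$, while $Z_L \ge \exp(-(A(\beta,\NN)+o(1))L^{\frac{p+2-4\gamma}{6-p}})$ by part (b), and the ratio is $\le \exp(-\frac{c(\delta)}2 L^{\frac{p+2-4\gamma}{6-p}}) \to 0$.

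The main obstacle is controlling the interaction between the Gaussian term $\ph(1)$ and the large drift $V(1)$ in the nonlinearity uniformly in $V$: one needs that $\int_{L\T}|\ph(1)+V(1)|^p$ is, up to $(1+\eps)$-multiplicative and additive errors, governed by $\int|V(1)|^p$, with the additive error genuinely of lower order than $L^{\frac{p+2-4\gamma}{6-p}}$ after the rescaling — and this must hold without a priori bounds on $V$ beyond the quadratic cost, so the GNS inequality and the endpoint nature of $p=6$ are what make $p<6$ essential here. A secondary technical point is justifying that the cost functional controls $\|V(1)\|_{H^1}$ rather than just $\|V(1)\|_{\dot H^1}$, which is where the $\alpha>0$ mass term and the $L^2$-cutoff cooperate; and in the concentration step, transferring the deterministic strict-suboptimality away from the optimiser (a statement on $\R$) to the periodic setting requires the scale separation $\lambda/L\to0$ so that mass cannot "hide" by spreading around the torus.
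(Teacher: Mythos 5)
Your three-stage plan (Bou\'e--Dupuis upper bound, test-drift lower bound, quantitative strengthening of the upper bound for concentration via a stability/profile-decomposition argument) is the same overall architecture as the paper's proof. However, there is a genuine gap in how you handle the Gaussian field $\ph(1)$ relative to the $L^2$-mass constraint, and this gap affects both the upper and lower bounds.

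The key quantitative fact you are overlooking is that $\ph(1)$ is \emph{not} $O(1)$ in the relevant $L^2$ sense: by \eqref{ph_pointwise}, $\E M(\ph) \approx L/(2\sqrt\alpha)$, i.e.\ the Gaussian carries a macroscopic mass of the \emph{same order} $L$ as the cutoff $\NN L$, and $M(\ph)$ concentrates around $L/(2\sqrt\alpha)$, not around $0$. This breaks your lower bound as written. You propose the deterministic drift $\dot V(s)\equiv V(1)=(1-\eps)L^{1/2}\lambda^{-1/2}Q_p(\lambda^{-1}\cdot)$ and claim the indicator $\1_{\{M(\ph+V(1))\le\NN L\}}$ holds with overwhelming probability because ``$M(\ph)=O(L)$ with exponential concentration and taking the $(1-\eps)$ room.'' But the room created is only $(1-(1-\eps)^2)\NN L\approx 2\eps\NN L$, whereas $M(\ph)\approx L/(2\sqrt\alpha)$, so for generic $\alpha$ (in particular for $\alpha$ small, which is allowed) the constraint is violated with overwhelming probability and the expectation in \eqref{BD} is approximately $-\frac12\int_0^1\|\dot V\|^2\,ds<0$, giving a useless bound. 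The paper resolves this by using the auxiliary process $X_\eta$ of Lemma~\ref{OUapprox}, which is progressively measurable, differentiable in $t$, lies in $H^1$, and approximates $\ph(1)$ in $L^2$; the drift is then $V=-X_\eta+t(1-\eps)w_\delta$, so that $\ph(1)+V(1)\approx(1-\eps)w_\delta$ and the mass constraint is genuinely satisfied. Simply setting $V=-\ph+\dotsb$ is not available, since $\ph(1)$ is neither $\mathcal F_t$-adapted nor in $H^1$; the construction of $X_\eta$ (and its error bounds) is precisely the nontrivial ingredient you are missing.

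A related (milder) issue appears in your upper bound. You write that after absorbing $\ph(1)$ via Young/GNS, ``the supremum over $V$ is dominated by a deterministic supremum over $W=V(1)$ subject to $M(W)\le\NN L$.'' But the original constraint is $M(\ph(1)+V(1))\le\NN L$, and because $M(\ph)\sim L$, converting this into $M(V(1))\lesssim\NN L$ without further input changes $\NN$ by an $O(1)$ additive constant, yielding $-A(\beta,\NN+c)$ rather than $-A(\beta,\NN)$. The paper handles this via the change of variable $V=W-\ph_\eps(1)$ with $\ph_\eps=e^{\eta\Delta}\ph$ and $\eta$ chosen random so that $\|\ph-\ph_\eps\|_{L^2}\le\eps(\NN L)^{1/2}$; then the constraint transfers to $M(W)\le(1+\eps)^2\NN L$, which costs only $o(1)$ after $\eps\to0$. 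Your concentration step is otherwise in the same spirit as the paper's Lemma~\ref{lemma: ub2} and Lemma~\ref{minimisers}(2) (strict sub-optimality away from the soliton manifold via profile decomposition), and would go through once the upper bound is repaired along these lines.
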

The uniqueness of the function $Q_p$ is shown in Lemma \ref{minimisers}, \eqref{minimisers1}.

The situation in the critical case is more delicate, the reason being that one cannot see the term $\ph(1)$ in \eqref{BDintro} as a small order perturbation anymore. This can be observed using the following heuristic. 
If we want the term $\ph(1) + V(1)$ to be close to a rescaled version of $Q_p$ with mass $\approx L\NN$, 
because of the rapid decay of $Q_p$ at infinity, $V$ must be close to $-\ph$ on much of the domain, and therefore one must have that 
$$ \Big|\int \ph(1)\overline{V(1)}\Big| \gtrsim L. $$
In turn, this implies that 
$$ \alpha \int |V|^2 \gtrsim L $$
as well. 
Moreover, we have that $L^{\frac{p+2-4\gamma}{6-p}} A(\beta,\NN) = LA(\beta,\NN)$, so the two terms have the same order. 
We add here that the role of $\alpha>0$ is not substantial in this heuristic, and one could repeat similar considerations by using only the term $\int |\partial_x V|^2$. 
This suggests that the asymptotic \eqref{thm2: asymptotic} and the concentration \eqref{concentration} do not hold when $\gamma = \frac p2-1$. However, we can recover a similar result in the asymptotic $\beta \to \infty$, which is the content of the following theorem.   
\begin{theorem}[Critical case, strong nonlinearity]\label{thm:2b}
Let $2 < p < 6$, and let $\alpha, \NN > 0$.  Let
\begin{equation}
Z_L := \E\left[\exp\Big(\frac\beta {pL^{\frac p2-1}} \int |\ph|^p\Big) \1_{\{M(\ph) \le \NN L\}}\right],
\end{equation}
on the torus of size $L$. Then 
\begin{equation}\label{thm2b: asymptotic}
\liminf_{L \to \infty} \frac{\log Z_L}{L} = -A(\beta, \NN)(1 + o(1)).
\end{equation}
as $\beta \to \infty$.
Moreover, let $Q_p \in H^1(\R)$ realise the maximum in the RHS of \eqref{thm2: asymptotic}, and let $\delta > 0$. For fixed $q < \infty$, consider the strip
\begin{align*}
S_\delta := \Big\{ u: &\exists x_0 \in [-L/2, L/2] \text{ s.t. }\\
&\norm{L^{-\frac12} \lambda^\frac 12 u(\lambda (\cdot- x_0)) -Q_p}_{L^2 \cap L^q(\T)} < \dl, \lambda = \beta^{\frac{4}{6-p}}\Big\},
\end{align*}
where $x-x_0$ is to be understood modulo $L$.
Then the measure concentrates on the strip $S_\delta$ as $\beta \to \infty$; more precisely, for every $\delta > 0$, there exists $\beta_0 = \beta_0(\dl) \gg 1$ such that for every $\beta \ge \beta_0$,
\begin{equation} \label{concentration2b}
\limsup_{L\to \infty} \frac{1}{Z_L} \E\left[\exp\Big(\frac\beta {pL^\gamma} \int |\ph|^p\Big) \1_{\{M(\ph) \le \NN L\}} \1_{\{\ph \not \in S_\delta \}}\right] = 0.
\end{equation}
\end{theorem}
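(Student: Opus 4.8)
\emph{Overall approach.} The plan is to run both the asymptotic \eqref{thm2b: asymptotic} and the concentration \eqref{concentration2b} through the Bou\'e--Dupuis formula of Proposition \ref{BDmain}, applied to $F(u)=\1_{\{M(u)\le\NN L\}}\frac{\beta}{pL^{p/2-1}}\int_{L\T}|u|^p$, with $\ph(1)\sim\mu_L$ and a progressively measurable drift $V$. Unlike the supercritical regime of Theorem \ref{thm:2}, at $\gamma=\frac p2-1$ the field $\ph(1)$ is not a lower-order perturbation: $M(\ph(1))$ is of size $\frac L{2\sqrt\alpha}$, the same order $\Theta(L)$ as the budget $\NN L$ and as the target rate $-A(\beta,\NN)L$. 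What unlocks the $\beta\to\infty$ statement is the scaling identity $A(\beta,\NN)=\beta^{4/(6-p)}A(1,\NN)$, obtained from the mass-preserving rescaling $u\mapsto\beta^{1/(6-p)}u(\beta^{2/(6-p)}\cdot)$: the target $-A(\beta,\NN)L\asymp\beta^{4/(6-p)}L$ diverges, whereas every correction coming from the presence of $\ph(1)$ (its mass, its $L^p$-contribution, the cost of steering it) is $O(L)$ with a $\beta$-independent constant, hence $o(A(\beta,\NN)L)$. So the proof is the supercritical-type argument plus the verification that these errors vanish in the $(1+o(1))$.

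\emph{Lower bound on $\log Z_L$.} I would take $V=V_1+V_2$. Here $V_1$ is deterministic with $V_1(1)=Q_p^L$, a mass-matched rescaling of the optimizer of $A(\beta,\NN')$ with $\NN'=\NN-\delta'$ slightly below $\NN$, so that $M(Q_p^L)=\NN'L$ and $\frac{\beta}{pL^{p/2-1}}\int|Q_p^L|^p-\frac12\|\partial_xQ_p^L\|_{L^2}^2=-A(\beta,\NN')L$, while the mass cost $\frac\alpha2\|Q_p^L\|_{L^2}^2=\frac{\alpha\NN'}2L$ is lower order. The role of $V_2$ is to steer $\ph$ so that $M(\ph(1)+V_2(1))\le\delta L$ with probability tending to $1$; such a control with $\frac12\E\int_0^1\|(\alpha-\Delta)^{1/2}\dot V_2\|_{L^2}^2\le I(\delta)L$ exists as a soft consequence of the large-deviation lower bound for $M(\ph(1))$ (equivalently of Bou\'e--Dupuis applied to $\1_{\{M\le\delta L\}}$). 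Since $Q_p^L$ is spatially concentrated while $\ph(1)+V_2(1)$ has mass $\le\delta L$, Cauchy--Schwarz on the cross term gives $M(\ph(1)+V_1(1)+V_2(1))\le(\NN'+\delta+2\sqrt{\NN\delta})L$, so the choice $\delta'=\delta+2\sqrt{\NN\delta}$ keeps the indicator $=1$ there, and the same concentration yields $\int|\ph(1)+V_1(1)+V_2(1)|^p=\int|Q_p^L|^p(1+o(1))$. Choosing $\delta=\delta(\beta)\to0$ slowly enough that $I(\delta(\beta))L=o(\beta^{4/(6-p)}L)$ makes $\delta'\to0$ and hence $A(\beta,\NN')=A(\beta,\NN)(1+o(1))$; summing the contributions gives $\liminf_L\frac{\log Z_L}L\ge-A(\beta,\NN)(1+o(1))$.

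\emph{Upper bound and concentration.} For the matching upper bound, given an arbitrary admissible $V$ I would split $\ph(1)=\ph_{\le N}(1)+\ph_{>N}(1)$ at frequency $N\sim L$: using $\|\ph_{>N}(1)\|_{L^\infty}^2\lesssim L/N$ (up to logarithms), the high tail contributes $\frac{\beta}{pL^{p/2-1}}\int|\ph_{>N}(1)|^p=O(\beta L^{2-p/2})=o(L)$, while $\ph_{\le N}(1)+V(1)\in H^1$ with $\|\partial_x(\ph_{\le N}(1)+V(1))\|_{L^2}\lesssim\sqrt L+\sqrt{\mathrm{cost}(V)}$. Gagliardo--Nirenberg--Sobolev on $\{M\le\NN L\}$ followed by optimization over $Y=\mathrm{cost}(V)$, with the supremum attained at $Y\asymp\beta^{4/(6-p)}L$, yields $\log Z_L\le-A(\beta,\NN)L(1+o(1))$, the $o(1)$ absorbing the $\beta$-independent $O(L)$ terms from $\ph(1)$ and from $\frac\alpha2\|V(1)\|_{L^2}^2$, the rescaled variational problem being recognized as $A(\beta,\NN)$. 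For \eqref{concentration2b} I would rerun this for the partition function $Z_L^{(\delta)}$ over $\{\ph\notin S_\delta\}$: there $\ph_{\le N}(1)+V(1)$ is, after rescaling and translation, at $L^2\cap L^q$-distance $\gtrsim\delta$ from the $S^1\times\R$-orbit of $Q_p$, and quantitative stability of the minimizer of $A(\beta,\NN)$ — unique up to these symmetries by Lemma \ref{minimisers} — improves the bound to $\log Z_L^{(\delta)}\le-(A(\beta,\NN)+\eta_\beta(\delta))L$ with $\eta_\beta(\delta)\asymp\beta^{4/(6-p)}\eta_1(\delta)>0$. Since this gap scales like $A(\beta,\NN)$ itself, there is $\beta_0(\delta)$ such that for $\beta\ge\beta_0(\delta)$ the lower bound on $\log Z_L$ exceeds $\log Z_L^{(\delta)}$ by an amount $\ge cL$, so the ratio in \eqref{concentration2b} tends to $0$.

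\emph{Main obstacle.} The delicate point throughout — the reason the critical case is harder than Theorem \ref{thm:2} and carries a phase transition — is the interaction of the sharp cutoff $\1_{\{M(u)\le\NN L\}}$ with the $\Theta(L)$ mass of $\ph(1)$: this mass is felt globally even though $\ph(1)$ barely overlaps the concentrated soliton, so one cannot merely superpose the soliton on the Gaussian, and room must be made by suppressing $M(\ph(1))$, which costs $\Theta(L)$ — exactly the order of the quantity being estimated. It is only the blow-up of $-A(\beta,\NN)$ as $\beta\to\infty$ that renders this cost negligible, and making the ensuing bookkeeping (control of the $V_1/V_2$ cross term, of $\|\ph(1)+V_2(1)\|_{L^\infty}$ on the soliton's support, and of the mass fluctuations) uniform enough to survive the iterated limit $L\to\infty$ then $\beta\to\infty$ is the technical heart of the proof. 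A secondary nuisance, handled by the same frequency cut-off at $N\sim L$, is that $\partial_x\ph(1)\notin L^2$ in one dimension.
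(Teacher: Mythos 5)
Your proposal follows the same broad strategy as the paper's proof: Bou\'e--Dupuis variational formula, soliton drift for the lower bound, smoothing/truncation of $\ph$ plus Gagliardo--Nirenberg--Sobolev for the upper bound, quantitative stability of the minimiser for the concentration, and the scaling $A(\beta,\NN) = \beta^{4/(6-p)} A(1,\NN)$ (equation~\eqref{Ascaling}) to make every $O(L)$ error an $o(A(\beta,\NN)L)$ error as $\beta\to\infty$.

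There is, however, a genuine gap in the lower bound. The steering drift $V_2$ that you extract from Bou\'e--Dupuis applied to $\1_{\{M\le\delta L\}}$ (equivalently from the large-deviation rate of $M(\ph)$) comes with only global $L^2$ control of the residual $r := \ph(1)+V_2(1)$: one learns $\P\big(M(r)\le\delta L\big)\to 1$ and $\tfrac12\E\int_0^1\|\dot V_2\|_{H^1_\alpha}^2\le I(\delta)L$, and nothing more. But when you expand $\int|Q_p^L + r|^p$, the elementary inequality you need is of the form $\int|Q_p^L + r|^p \ge (1-\eps)\int|Q_p^L|^p - C_\eps\int|r|^p$, so you require control of $\int|r|^p$ (or at least of $\|r\|_{L^p}$ on the support of the soliton), not just of $\|r\|_{L^2}$. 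The assertion ``the same concentration yields $\int|\ph(1)+V_1(1)+V_2(1)|^p = \int|Q_p^L|^p(1+o(1))$'' therefore does not follow. The paper circumvents this by making the steering explicit: Lemma~\ref{OUapprox} constructs a progressively measurable low-pass filter $X_\eta$ of $\ph$ (an ODE in $t$ forced by $\ph$, so causal and $C^1$ in time, hence an admissible $V$ after integration) with the pointwise-in-$x$ variance bound $\E|\ph(1)(x)-X_\eta(1)(x)|^2\lesssim\eta|\log\eta|$ and cost $\E\int_0^1\|\dot X_\eta\|_{\dot H^1}^2\lesssim L|\log\eta|/\eta$. From the pointwise bound and Gaussian hypercontractivity one then gets $\E\int|\ph(1)-X_\eta(1)|^p\lesssim L(\eta|\log\eta|)^{p/2}$, which is exactly the missing $L^p$ control; with $V = -X_\eta + t(1-\eps)w_\delta$ (where $w_\delta$ is a mean-zero rescaling of $Q_p$) the lower bound then closes. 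Your upper-bound and concentration sketches are minor variations of the paper's arguments (a Littlewood--Paley cut at $N\sim L$ in place of the adaptive heat-semigroup mollification $\ph_\eps = e^{\eta\Delta}\ph$; and a stability gap scaling with $A(\beta,\NN)$, which is what Lemma~\ref{lemma: ub2} encodes), and are fine at sketch level.
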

As it turns out, the quantitive concentration statements \eqref{concentration} and \eqref{concentration2b}, together with translation invariance of the measure $\rho_L$, are enough to show the qualitative parts of the statements in Theorem \ref{thm:list} (i) and (ii). Indeed, we have the following.
\begin{corollary} \label{triviality}
Let $2 < p < 6$, and let $\alpha, \NN > 0$, and $\gamma \le \frac p2 -1$. Let $M \gg 1$, $\eps >0$. Then
\begin{enumerate}
\item If $\gamma < \frac p2-1$,  
\begin{equation}
\lim_{L \to \infty} \frac 1 {Z_L} \E\left[\exp\Big(\frac\beta {pL^\gamma} \int |\ph|^p\Big) \1_{\{M(\ph) \le \NN L\}} \1_{\{\int_{-M}^{M} |\ph| > \eps \}}\right] = 0.
\end{equation}
\item If $\gamma = \frac p2-1$, 
\begin{equation}
\lim_{\beta \to \infty} \limsup_{L \to \infty} \frac 1 {Z_L} \E\left[\exp\Big(\frac\beta {pL^\gamma} \int |\ph|^p\Big) \1_{\{M(\ph) \le \NN L\}} \1_{\{\int_{-M}^{M} |\ph| > \eps \}}\right] = 0.
\end{equation}
\end{enumerate}
In particular, 
\begin{enumerate}
\item For $\gamma < \frac p2 -1$, the measure on distributions $\frac 1 {Z_L} \exp \Big( \frac\beta {pL^\gamma} \int |\ph|^p \Big) \mu $ converges weakly to $\dl_0$ as $L \to \infty$.
\item For $\gamma = \frac p2 -1$, $\beta \gg 1$, we have that
$$ \lim_{L\to \infty} \frac 1 {Z_L} \exp \big( \frac\beta {pL^\gamma} \int |\ph|^p \big) \mu \neq \mu,$$
if any such limit exists. 
\end{enumerate}
\end{corollary}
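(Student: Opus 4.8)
The plan is to deduce the two quantitative estimates (1) and (2) from the concentration statements \eqref{concentration} and \eqref{concentration2b}, the translation invariance of the probability measure $\rho_L:=Z_L^{-1}\exp\big(\tfrac{\beta}{pL^\gamma}\int_{L\T}|u|^p\big)\1_{\{M(u)\le \NN L\}}\mu_L$, and an elementary ``local-to-global'' averaging device; the two ``in particular'' statements will then follow by testing against compactly supported test functions. Throughout, recall that $\mu_L$ — hence $\rho_L$ — is supported on continuous functions, so $\int_{-M}^M|u|$ makes classical sense. The first step is the averaging identity: fix $M,\epsilon>0$, put $A:=\{u:\int_{-M}^M|u|>\epsilon\}$ and, for $y\in L\T$, $I_y:=[-M-y,M-y]\subset L\T$. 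Since $\rho_L$ is translation invariant, and the strip $S_\delta$ of Theorem~\ref{thm:2} (resp.\ Theorem~\ref{thm:2b}) is translation invariant (translating $u$ merely shifts its soliton parameter $x_0$), the sub-probability measure $\rho_L(\,\cdot\,\cap S_\delta)$ is translation invariant, and $\{u:u(\cdot-y)\in A\cap S_\delta\}=\{\int_{I_y}|u|>\epsilon\}\cap S_\delta$. Integrating over $y$ and using Chebyshev and Fubini (each point lies in $I_y$ for a set of $y$ of measure $2M$) gives
\[
\rho_L(A\cap S_\delta)=\frac1L\,\E_{\rho_L}\!\Big[\1_{S_\delta}\big|\{\,y\in[-\tfrac L2,\tfrac L2]:\textstyle\int_{I_y}|u|>\epsilon\,\}\big|\Big]\ \le\ \frac{2M}{\epsilon L}\,\E_{\rho_L}\!\big[\1_{S_\delta}\|u\|_{L^1(L\T)}\big].
\]

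Next I would bound $\|u\|_{L^1(L\T)}$ on $S_\delta$. On this event one writes $u=\Psi+e$, where $\Psi$ is the (translated, phase-rotated) rescaled soliton $L^{1/2}Q_p^L(\cdot-x_0)$ from the definition of $S_\delta$ and $e$ is the remainder. Undoing the rescaling, the $L^2(L\T)$-norm picks up exactly a factor $L^{1/2}$ relative to the rescaled norm controlled by $\delta$, so $\|e\|_{L^2(L\T)}\le\delta L^{1/2}$ and hence, by Cauchy--Schwarz on $L\T$, $\|e\|_{L^1(L\T)}\le L^{1/2}\|e\|_{L^2(L\T)}\le\delta L$. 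As for $\Psi$: it carries $L^2$-mass $\le\NN L$ and, since $Q_p\in S(\R)$, is essentially supported on an interval whose length $w_L$ is dictated by the rescaling of Theorem~\ref{thm:2} (resp.\ \ref{thm:2b}) and satisfies $w_L=o(L)$ — indeed $w_L\to0$ in the supercritical case and $w_L=\beta^{-4/(6-p)}$ (a fixed constant) in the critical case; in either case $w_L L=o(L^2)$, so Cauchy--Schwarz gives $\|\Psi\|_{L^1(L\T)}\les (w_L\NN L)^{1/2}=o(L)$. Thus $\|u\|_{L^1(L\T)}\le o(L)+\delta L$ on $S_\delta$, and the display above becomes $\rho_L(A\cap S_\delta)\le o(1)+\tfrac{2M\delta}{\epsilon}$. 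Combining with $\rho_L(A)\le\rho_L(S_\delta^c)+\rho_L(A\cap S_\delta)$: in the supercritical case \eqref{concentration} gives $\rho_L(S_\delta^c)\to0$, so $\limsup_L\rho_L(A)\le\tfrac{2M\delta}{\epsilon}$ for every $\delta$, i.e.\ $\lim_L\rho_L(A)=0$; this is (1). In the critical case, \eqref{concentration2b} gives $\limsup_L\rho_L(S_\delta^c)=0$ once $\beta\ge\beta_0(\delta)$, so for such $\beta$, $\limsup_L\rho_L(A)\le\tfrac{2M\delta}{\epsilon}$, whence $\lim_{\beta\to\infty}\limsup_L\rho_L(A)\le\tfrac{2M\delta}{\epsilon}$ for every $\delta$; this is (2).

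For the ``in particular'' claims I would argue as follows. The weak-$*$ topology on $\D'(\R)$ is generated by the maps $u\mapsto\langle u,\phi\rangle$, $\phi\in C_c^\infty(\R)$, and $|\langle u,\phi\rangle|\le\|\phi\|_{L^\infty}\int_{-M}^M|u|$ whenever $\supp\phi\subset[-M,M]$, so $\{|\langle u,\phi\rangle|>\epsilon\}\subseteq\{\int_{-M}^M|u|>\epsilon/\|\phi\|_{L^\infty}\}$. In case (1) this and the estimate just proved force $\rho_L(\{|\langle u,\phi\rangle|>\epsilon\})\to0$ for every $\phi$ and $\epsilon$, which is exactly weak convergence $\rho_L\to\delta_0$. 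In case (2), fix $\phi\in C_c^\infty(\R)\setminus\{0\}$ and $\epsilon>0$ with $c:=\mu_{OU}(\{|\langle u,\phi\rangle|>\epsilon\})>0$ — possible because $\langle u,\phi\rangle$ is a nondegenerate (complex) Gaussian under $\mu_{OU}$ — and use (2) to pick $\beta$ so large that $\limsup_L\rho_L(\{|\langle u,\phi\rangle|>\epsilon\})<c/2$. Since $\{|\langle u,\phi\rangle|>\epsilon\}$ is open in $\D'(\R)$, the Portmanteau theorem gives, for any weak limit $\nu$ of $\rho_L$, that $\nu(\{|\langle u,\phi\rangle|>\epsilon\})\le\liminf_L\rho_L(\{|\langle u,\phi\rangle|>\epsilon\})\le c/2<c$, so $\nu\ne\mu_{OU}$; since $\mu_L\to\mu_{OU}$ weakly, this is the stated conclusion.

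The main obstacle is the bound on $\|u\|_{L^1(L\T)}$ over $S_\delta$. The strip $S_\delta$ only supplies a \emph{rescaled} $L^2$ (and $L^q$) approximation of $u$ by a soliton, which when unscaled reads merely $\|u-\Psi\|_{L^2(L\T)}=O(\delta L^{1/2})$ — far too weak to control the local quantity $\int_{-M}^M|u|$ on a fixed window directly (splitting $u=\Psi+e$ and estimating $\int_{-M}^M|e|$ naively produces a bound of size $\delta L^{1/2}$). The averaging identity resolves this by trading $\int_{-M}^M|u|$ for the global norm $\|u\|_{L^1(L\T)}$ at the cost of the prefactor $L^{-1}$, which then beats $\|\Psi\|_{L^1(L\T)}=o(L)$ as well as $\|e\|_{L^1(L\T)}=O(\delta L)$. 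The one genuinely soliton-dependent input is the estimate $\|\Psi\|_{L^1(L\T)}=o(L)$, which rests on the narrowness of the profile ($w_L=o(L)$) together with $Q_p\in S(\R)$ and the explicit value of the rescaling scale furnished by Theorems~\ref{thm:2} and~\ref{thm:2b}; should one prefer to work directly with a pointwise description of $\Psi$, an equivalent route is to observe that, $x_0$ being (by translation invariance) effectively uniform on $[-L/2,L/2]$, the event $\{\int_{-M}^M|\Psi|>\epsilon\}$ confines $x_0$ to an $L$-independent interval of length $O_{M,\epsilon}(1)$, which has $\rho_L$-probability $O_{M,\epsilon}(1/L)\to0$.
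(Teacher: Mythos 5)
Your high-level strategy is exactly the paper's: use the concentration estimates \eqref{concentration}/\eqref{concentration2b}, invoke translation invariance of $\rho_L$ to average the event $\{\int_{-M}^M|\ph|>\eps\}$ over translates, and then bound the measure of the bad set of translates on the strip $S_\delta$ by splitting $u$ into a soliton piece $\Psi$ and a remainder $e$. The one structural difference is in how the averaged quantity is controlled: the paper keeps track directly of $|\{y:\int_{-M+y}^{M+y}|\ph|>\eps\}|$ and bounds the soliton part and the remainder part of that set separately (H\"older plus Fubini for the remainder, the exponential decay of $Q_p$ for the soliton), whereas you fold both into a single Chebyshev/Fubini step yielding $\rho_L(A\cap S_\delta)\le\tfrac{2M}{\eps L}\,\E_{\rho_L}[\1_{S_\delta}\|u\|_{L^1(L\T)}]$ and then bound $\|u\|_{L^1(L\T)}$ globally. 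This is a slightly cleaner packaging of the same idea, and the ``in particular'' deductions via Portmanteau match the intent of the paper.

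Two technical points deserve scrutiny before this can be called complete. First, the bound $\|e\|_{L^2(L\T)}\le\delta L^{1/2}$ does not follow directly from membership in $S_\delta$: the strip controls the rescaled quantity $\|L^{-1/2}\lambda^{1/2}u(\lambda(\cdot-x_0))-Q_p\|_{L^2\cap L^q}$, which after changing back to the physical variable only controls $e$ on the interval of length $\lambda L$ on which the soliton lives; control of $e$ on the remaining $(1-\lambda)L$ portion of the torus must come from the mass cutoff $M(u)\le\NN L$ combined with the fact that $M(\Psi)\approx\NN L$, which yields $\|e\|_{L^2(L\T)}\lesssim\delta^{1/2}L^{1/2}$ (harmless, but the argument should say so). Second, and more importantly, your claim ``$w_L\to 0$ in the supercritical case, and $w_L=\beta^{-4/(6-p)}$ in the critical case'' is the \emph{reciprocal} of the parameter $\lambda$ as it is written in Theorem~\ref{thm:2} and Theorem~\ref{thm:2b} ($\lambda=L^{(p-2-2\gamma)/(6-p)}\to\infty$, resp.\ $\lambda=\beta^{4/(6-p)}$). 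If one takes the paper's $\lambda$ at face value, your Cauchy--Schwarz estimate $\|\Psi\|_{L^1}\lesssim(\lambda\NN L)^{1/2}$ is $o(L)$ only when $\lambda=o(L)$, i.e.\ $\gamma>p-4$, which excludes part of the allowed range when $p\ge 4$. (For what it is worth, a direct balancing of the terms in the Bou\'e--Dupuis heuristic of Section~1.2 gives $\lambda=L^{-(p-2-2\gamma)/(6-p)}\to 0$, which agrees with your $w_L$; if that is the intended scaling, your estimate is safe, and the paper's proof — which carries factors $\lambda^{-1}$ and $\lambda^{-3/2}$ pointing the other way — would need the opposite reading. You should make the choice of convention explicit and check it against the proof of Theorem~\ref{thm:2}.) The ``alternative route'' you mention in the last paragraph also needs to allow the bad window of $x_0$'s to have length comparable to $\max(\lambda,M)$ rather than $O_{M,\eps}(1)$; this is what the paper's $\lambda L^{\eps_0}\1_{\{L\gtrsim\lambda^3\}}$ estimate is tracking.
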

The proofs of Theorem \ref{thm:2}, Theorem \ref{thm:2b} and Corollary \ref{triviality} will be presented in Section 4.

\subsection{Weakly nonlinear critical case and subcritical case} 
In this context, in Section \ref{sec:GaussianLimit} we will show the following.  
\begin{theorem}[Weakly nonlinear critical case and subcritical case]\label{thm:3}
Let $\gamma = \frac p 2 -1$ and let $\NN > \frac 1 {2 \sqrt{\alpha}}$. Then, for every $0\le\beta < \beta_0 = \beta_0(p,\alpha,\NN)$, the measure on $C(\R)$
\begin{equation}\label{rhoL}
\rho_L = \frac{1}{Z_L} \exp\Big(\frac\beta {pL^\gamma} \int_{-L/2}^{L/2} |\ph|^p\Big) \1_{\{M(\ph) \le \NN L\}}\mu_L,
\end{equation}
converges weakly to the Ornstein-Uhlenbeck measure with mass parameter $\alpha$ as $L \to \infty$. Here, by Ornstein-Uhlenbeck with mass parameter $\alpha$, we mean the centred, complex-valued Gaussian measure on $C(\R)$ with covariance operator
\begin{gather*}
 \E[\langle{\ph},{f}\rangle\overline{\langle{\ph},{g}\rangle}] = \overline{\langle{(\alpha - \Delta)f},{g}\rangle},\\
 \E[\langle{\ph},{f}\rangle{\langle{\ph},{g}\rangle}] =0
\end{gather*}
for each $f,g \in C^\infty_c(\R)$.
The same result holds for $\gamma > \frac p 2 -1$,  $\NN > \frac 1 {2 \sqrt \alpha}$, and for every $0 \le \beta < +\infty$.
\end{theorem}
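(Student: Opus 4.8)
The plan is to compare $\rho_L$ not directly with the whole-line Ornstein--Uhlenbeck measure $\mu_{OU}$ but with the periodic Gaussian field $\mu_L$, using that $\mu_L\to\mu_{OU}$ weakly as $L\to\infty$ (the periodic Ornstein--Uhlenbeck process on $L\T$, restricted to any fixed window, converges to the one on $\R$; this is elementary). Writing $W_L(u)=\frac{\beta}{pL^\gamma}\int_{L\T}|u|^p$, it then suffices to prove that for every bounded continuous $G$ depending only on $u|_{[-R,R]}$ one has $\E_{\rho_L}[G]-\E_{\mu_L}[G]\to0$, together with tightness of $\{\rho_L\}_L$; these two facts identify the weak limit of $\rho_L$ with that of $\mu_L$. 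Fix such a $G$ and split $W_L=W_{\mathrm{loc}}+W_{\mathrm{far}}$ and $M(u)=M_{\mathrm{loc}}(u)+M_{\mathrm{far}}(u)$ along $L\T=[-R,R]\cup(L\T\setminus[-R,R])$, so that $W_{\mathrm{loc}}=\frac{\beta}{pL^\gamma}\int_{-R}^{R}|u|^p$ is small (recall $\gamma=\frac p2-1>0$, and larger in the subcritical case) while $W_{\mathrm{far}}$ is extensive.

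\textbf{Decoupling via the Markov property.} Since $\mu_L$ is the law of a reversible Markov process on the circle, conditionally on the boundary values $u(\pm R)$ the restrictions $u|_{[-R,R]}$ and $u|_{L\T\setminus[-R,R]}$ are independent, the first being an Ornstein--Uhlenbeck bridge of fixed length $2R$ and the second a bridge of length $L-2R$. Sandwiching the global constraint between $\{M_{\mathrm{far}}\le\NN L\}$ and $\{M_{\mathrm{far}}\le\NN L-C\}\cap\{M_{\mathrm{loc}}\le C\}$ and integrating out the two bridges writes $\E_{\rho_L}[G]$, up to $o(1)$, as a quotient of $u(\pm R)$-averages of products $\Phi_L^{\mathrm{loc}}\cdot\Phi_L^{\mathrm{far}}$, where $\Phi_L^{\mathrm{far}}(a,b)=\E[e^{W_{\mathrm{far}}}\1_{\{M_{\mathrm{far}}\le\NN L+O(1)\}}\mid u(\pm R)=(a,b)]$ and $\Phi_L^{\mathrm{loc}}(a,b)$ is the corresponding local factor carrying $G$. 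The key observation is that $\Phi_L^{\mathrm{far}}(a,b)$ depends on $(a,b)$ only through a factor $1+o(1)$, uniformly for $(a,b)$ in compacts: by the exponential forgetting of the Ornstein--Uhlenbeck process, changing an endpoint of the long bridge only perturbs an $O(1)$ window, hence alters $W_{\mathrm{far}}$ by $O(L^{-\gamma})=o(1)$ and $M_{\mathrm{far}}$ by $O(1)$; the latter is harmless because the constraint is macroscopically slack, the typical value of $M_{\mathrm{far}}$ being of order $\frac{L}{2\sqrt\alpha}<\NN L$ (here $\NN>\frac{1}{2\sqrt\alpha}$ enters), with exponentially small fluctuations by \eqref{LDE}. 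Thus the far factor cancels in the quotient, and we are reduced to showing that $\E[G\,e^{W_{\mathrm{loc}}}\1_{\{M_{\mathrm{loc}}\le C\}}\mid u(\pm R)]/\E[e^{W_{\mathrm{loc}}}\1_{\{M_{\mathrm{loc}}\le C\}}\mid u(\pm R)]\to\E[G\mid u(\pm R)]$, after which the tower property and $\mu_L\to\mu_{OU}$ conclude.

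\textbf{The local step.} Since $W_{\mathrm{loc}}\to0$ pointwise, $G$ is bounded, and the truncation at level $C$ is inert as $C\to\infty$, the only danger is that $W_{\mathrm{loc}}$ be atypically large; by Gagliardo--Nirenberg (or a direct concentration-of-mass bound) this forces $M_{\mathrm{loc}}\gtrsim L^{\gamma}$, on which $W_{\mathrm{loc}}\lesssim\beta L^{-\gamma}M_{\mathrm{loc}}^{p/2}R^{1-p/2}$. Bounding the contribution of $\{M_{\mathrm{loc}}\ge m\}$ by the large-deviation cost $\P_{\mu_L}(M_{\mathrm{loc}}\ge m)\lesssim e^{-cm}$ for $m\gg R$, together with the hard cap $m\le\NN L$ coming from the global cutoff, and optimising over $m$ and over the shape of the would-be concentration, the net exponent is $L\big(J(m/L)+o(1)\big)$ with $J(\theta)=-A(\beta,\theta)-\frac\alpha2\theta$, where $A(\beta,\cdot)$ is the quantity \eqref{min:eqn} with a general mass threshold. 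Since $-A(\beta,\theta)\asymp\theta^{(p+2)/(6-p)}\beta^{4/(6-p)}$ with $(p+2)/(6-p)>1$, one has $J<0$ on all of $(0,\NN]$ precisely when $\beta<\beta_0(\alpha,p,\NN)$, the threshold at which $-A(\beta,\NN)=\frac\alpha2\NN$; in that regime the anomalous event contributes $o(1)$, and dominated convergence on its complement closes the estimate. For $\gamma>\frac p2-1$ the would-be gain from concentration is only $e^{o(L)}$ while the cost remains $\asymp e^{cL}$ (the concentrated profile costs at least $\frac\alpha2$ times its mass in the Gaussian action), so the suppression holds for all $\beta<\infty$. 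Tightness of $\{\rho_L\}_L$, i.e.\ $\sup_L\E_{\rho_L}\|u\|_{C^s([-R,R])}^k<\infty$ for suitable $s>0$, follows from the same conditioning and large-deviation inputs.

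\textbf{Main obstacle.} The crux is precisely the comparison in the local step between the large-deviation cost of a macroscopic $L^2$-mass concentration inside a fixed window and the exponential gain $e^{W_{\mathrm{loc}}}$ it produces: this is the mechanism of the phase transition, and one must pin down $\beta_0$ so that below it the ``soliton'' configurations are exponentially suppressed relative to the Gaussian bulk, which contributes only $\exp\big(\frac{\beta c_p}{p}L^{1-\gamma}+o(L^{1-\gamma})\big)$ for a constant $c_p$, with $1-\gamma=2-\frac p2<2$, hence sub-exponentially and never competitively with a genuine concentration. Rather than treating ``concentrated'' and ``bulk'' profiles by hand, it is cleanest to run the local step through the Bou\'e--Dupuis formula \eqref{BDintro}: bounding the functional by Gagliardo--Nirenberg together with the mass cap, one shows that any near-optimal drift $V$ must satisfy $\|V(1)\|_{L^2}^2=o(L)$, i.e.\ no macroscopic concentration, after which the variational problem is a genuine perturbation of $\mu_L$; the smallness of $\beta$, respectively the extra decay $L^{-\gamma}$, is exactly what makes the penalty $\frac\alpha2\|V(1)\|_{L^2}^2$ dominate the interaction gain once $\|V(1)\|_{L^2}^2$ reaches order $L$.
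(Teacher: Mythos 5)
Your proposal takes a genuinely different route from the paper: you use a \emph{fixed} observation window $[-R,R]$ together with the Markov property of the Gaussian field (conditional independence given $u(\pm R)$, plus exponential forgetting along the bridge), whereas the paper uses a \emph{growing} window $[-L^\theta/2,L^\theta/2]$ with $0<\theta<\gamma$ and implements the decoupling variationally inside the Bou\'e--Dupuis formula (Lemmas \ref{lemma: large_scale}--\ref{lemma: small_scale2b}, which exploit the exponential decay of solutions of $(\alpha-\Delta)v=0$ on an annulus). Your Markov-property route is arguably more probabilistically transparent; the paper's growing window buys a genuinely simplified treatment of the local factor, since then $\E[e^{W_{\mathrm{loc}}}\1_{\{M\le\NN L\}}]\to1$ outright (Lemma \ref{lemma:small_scale1}), which is not true for a fixed window.

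There are, however, two real gaps. First, the sandwich $\{M_{\mathrm{far}}\le\NN L-C\}\cap\{M_{\mathrm{loc}}\le C\}\subseteq\{M\le\NN L\}\subseteq\{M_{\mathrm{far}}\le\NN L\}$ is treated as innocuous via the Gaussian large-deviation estimate \eqref{LDE}, but the quantity you actually need to control is the \emph{weighted} tail
\[
\E^{a,b}\Big[e^{W_{\mathrm{far}}}\,\1_{\{\NN L - C < M_{\mathrm{far}}\le \NN L\}}\Big]\Big/\E^{a,b}\Big[e^{W_{\mathrm{far}}}\,\1_{\{M_{\mathrm{far}}\le \NN L - C\}}\Big],
\]
and since $e^{W_{\mathrm{far}}}$ is extensive (of order $\exp(O(L^{1-\gamma}))$), a bare Gaussian LDE does not close this. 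You need a H\"older/interpolation step with some $q>1$ and the hypothesis $q\beta<\beta_0$ to dominate the weight, which is exactly what the paper's Lemmas \ref{lemma: eps_removal}, \ref{lemma: theta_removal}, \ref{lemma: theta_removal2} accomplish; without it, the exponential-forgetting comparison of $\Phi_L^{\mathrm{far}}(a,b)$ across boundary data also has an uncontrolled contribution from the indicator flipping near the threshold. Second, the local-step bound is not quite right as stated: ``$W_{\mathrm{loc}}$ atypically large forces $M_{\mathrm{loc}}\gtrsim L^\gamma$'' does not follow from Gagliardo--Nirenberg alone, because on a fixed window large $\int_{-R}^R|u|^p$ can be produced by a large gradient with moderate mass; one must keep both $\|u'\|_{L^2}$ and $M_{\mathrm{loc}}$ in the GNS bound, as the paper does in \eqref{A3}--\eqref{A5}, and the net exponent you write as $L\big(J(m/L)+o(1)\big)$ with $J(\theta)=-A(\beta,\theta)-\tfrac{\alpha}{2}\theta$ is then only a heuristic for where $\beta_0$ lives, not a proof. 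You do note at the end that the local step is ``cleanest'' via Bou\'e--Dupuis, which is correct; but once you invoke it you are essentially running Lemma \ref{lemma:small_scale1} in disguise, so the savings over the paper's proof at this step are illusory. In short, the decoupling skeleton is sound and pleasantly concrete, but the mass-cutoff removal and the local step both require the Bou\'e--Dupuis/H\"older machinery of Section 5 rather than the softer arguments you sketch.
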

The proof of Theorem \ref{thm:3} is rather delicate, and we will discuss here its main steps. First of all, we notice that for $\gamma \le 1$, one cannot hope that the (unnormalised) density $\exp\Big(\frac\beta {pL^\gamma} \int_{-L/2}^{L/2} |\ph|^p\Big) \1_{\{M(\ph) \le \NN L\}}$ converges to the constant function $\1$ in $L^1(\mu_L)$. Indeed, by a simple application of Jensen's inequality, we obtain that 
\begin{align*}
&\E\Big[ \exp\Big(\frac\beta {pL^\gamma} \int_{-L/2}^{L/2} |\ph|^p\Big) \1_{\{M(\ph) \le \NN L\}}\Big]\\
& \ge  \exp\Bigg( \frac{\E\Big[ \1_{\{M(\ph) \le \NN L\}} \frac\beta {pL^\gamma} \int_{-L/2}^{L/2} |\ph|^p \Big]}{\P(\{M(\ph) \le \NN L\}}) \Bigg) \P(\{M(\ph) \le \NN L\}) \\
&\ge \exp(cL^{1-\gamma})(1+o(1)),
\end{align*}
where we simply used translation invariance to estimate the integral inside the expectation, together with the fact that $\P(\{M(\ph) \le \NN L\}) \to 1$ exponentially fast (see \eqref{LDE}). 
Instead, what we do is to fix a parameter $0< \theta < \gamma$, and split the density of $\frac{d \rho_L}{d \mu_L}$:
\begin{align*}
\begin{multlined}
\rho_L =\frac{1}{Z_L} \exp\Big(\frac\beta {pL^\gamma} \int_{-L^\ta/2}^{L^\ta/2} |\ph|^p\Big) \1_{\{M(\ph) \le \NN L\}} \\
\times \exp\Big(\frac\beta {pL^\gamma} \int_{[-L^\ta/2,L^\ta/2]^c} |\ph|^p\Big) \1_{\{M(\ph) \le \NN L\}} \mu_L.
\end{multlined}
\end{align*}
Then the proof Theorem \ref{thm:3} follows from the following ideas. 
\begin{enumerate}
\item The density $\exp\Big(\frac\beta {pL^\gamma} \int_{-L^\ta/2}^{L^\ta/2} |\ph|^p\Big) \1_{\{M(\ph) \le \NN L\}}$ converges to $\1$ on $L^1(\mu_L)$. 
\item \label{RandomLabel}For fixed $K > 0$, the random variables $\ph|_{[-K,K]}$ and $\exp\Big(\frac\beta {pL^\gamma} \int_{[-L^\ta/2,L^\ta/2]^c} |\ph|^p\Big)$ are ``almost independent"  as $L \to \infty$ when $\ph$ is distributed according to $\mu_L$. 
\end{enumerate}
Property $(1)$ strongly depends on the smallness of the parameter $\beta$, or subcriticality. Indeed, by exploiting the variational formula \eqref{BDintro} again, with $F= \1_{\{M(\ph) \le \NN L\}} \frac\beta {pL^\gamma} \int_{-L^\ta/2}^{L^\ta/2} |\ph|^p$, and the fact that $\P(\{M(\ph) \le \NN L\}) \to 1$ exponentially fast, the convergence in $L^1$ essentially coincides with the statement that $V\equiv 0$ is an almost optimiser for \eqref{BDintro}. 
This in turn essentially requires (a variation of) the following inequality 
\begin{align*}
&\1_{ \{ \norm{\ph(1) + V}_{L^2}^2 \le L\NN \}} \frac{\beta}{pL^\gamma} \int |V|^p \\
&\le  \frac 12  \norm{\partial_xV}_{\dot H^1}^2 + \frac \alpha 2 \norm{ V}_{L^2}^2 + o(1),
\end{align*}
which is what requires us to be in the small nonlinearity regime (or in the subcritical case). A rigorous version of (1) above and this heuristic can be found in Lemma \ref{lemma:small_scale1}. 

We now move to discussing how one can obtain a suitable version of (2). This essentially boils down to showing that if $F$  depends only on $\ph|_{[-K,K]}$, then
\begin{equation} \label{ind_hope}
\begin{aligned}
 &\E\Big[\exp\Big( F(\ph)\Big) \times \exp\Big(\frac\beta {pL^\gamma} \int_{[-L^\ta/2,L^\ta/2]^c} |\ph|^p\Big)  \1_{\{M(\ph) \le \NN L\}} \Big] \\
 & \approx \E\Big[\exp\Big( F(\ph)\Big) \Big] \times \E\Big[\exp\Big(\frac\beta {pL^\gamma} \int_{[-L^\ta/2,L^\ta/2]^c} |\ph|^p\Big)  \1_{\{M(\ph) \le \NN L\}} \Big].
\end{aligned}
\end{equation}
If we replace the quantity $M(u)$ with 
$$ M_\ta(u) = \int_{[-L^\ta/2,L^\ta/2]^c} |u|^2, $$
then we can obtain \eqref{ind_hope} by writing the three expectations in \eqref{ind_hope} via the variational formula \eqref{BDintro} once again, and then studying the properties of (almost) optimisers of the three expressions. This is performed in Lemma \ref{lemma: large_scale}, Lemma \ref{lemma: small_scale2} and Lemma \ref{lemma: small_scale2b}, and relies heavily on the fact that solutions of the equation 
$$ (\alpha - \Delta) v = 0 \text{ on }  [-L^\ta/2,L^\ta/2]\setminus [-K,K] $$
depend only on the boundary values $v(-L^\ta/2), v(L^\ta/2), v(-K), v(K)$, and decay exponentially fast in the distance from the boundary. 

The final step is replacing $M_\ta(\ph)$ with $M(\ph)$. This is rather delicate, and it is performed in Lemma \ref{lemma: eps_removal}, Lemma \ref{lemma: cutoff_removal} and Lemma \ref{lemma: theta_removal2}. 
While Lemma \ref{lemma: eps_removal} and Lemma \ref{lemma: theta_removal2} can be proven using the various ideas  presented in this introduction, the proof of Lemma \ref{lemma: cutoff_removal} heavily relies on translation invariance.
\subsection{Structure of the paper}
This paper is structured as follows. 

In Section 2, we introduce various notations, and prove a series of properties related to optimisers for the quantity $A(\beta, \NN)$ defined in \eqref{min:eqn}, and the analogous relevant properties for similar quantities defined over function on the torus $L\T$. We will also show the existence of an approximation of the endpoint of a Brownian motion $\ph(1)$ with another Gaussian process $X_\eta(1)$, where $X_\eta$ is differentiable in the time variable $t$. This approximation will be instrumental in the proofs of Proposition \ref{prop:1b}, and the lower bounds in \eqref{thm2: asymptotic} and \eqref{thm2b: asymptotic}.

In Section 3, we prove Theorem \ref{thm:1} for $\NN \neq \NN_0$, and show that the quantity $Z(\beta,L,\NN)$ satisfies the Bou\'e-Dupuis formula \eqref{BD} whenever it is finite.

In Section 4, we will focus on the supercritical case and the strongly nonlinear critical case, and show Theorem \ref{thm:2}, Theorem \ref{thm:2b} and Corollary \ref{triviality}. In particular, we will first show \eqref{thm2: asymptotic} and \eqref{thm2b: asymptotic} by showing independently lower and upper bounds for $Z_L$, and then exploiting those to show \eqref{concentration} and \eqref{concentration2b}, in order to then proceed to the proof of Corollary \ref{triviality}.

In Section 5, we consider the subcritical case and the weakly nonlinear critical case, and show Theorem \ref{thm:3} following the strategy delineated in subsection 1.3.

In Appendix A we will present the particular version of the Bou\'e-Dupuis formula that we are going to use throughout the paper. In Appendix B, we show a large deviation estimate for $M(\ph)$ when $\ph$ is distributed according to $\mu_L$, and then show convergence of $\mu_L$ to $\mu_{OU}$ when $L \to \infty$ in the correct topology. 

\subsection*{Acknowledgements:}
We thank Fabian H\"ofer for his detailed reading of an earlier version of this article. 
HW was supported by the Royal Society through the University Research Fellowship UF140187, by the Leverhulme Trust through a Philip Leverhulme Prize and by the European Union (ERC, GE4SPDE, 101045082). HW acknowledges funding by the Deutsche Forschungsgemeinschaft under Germany’s Excellence Strategy EXC 2044 390685587, Mathematics M\"unster: Dynamics – Geometry – Structure.

\section{Preliminaries} 

For a function $f: [-L/2,L/2] \to \C$, we define the Fourier series $\hat f: \Z \to \C$ by the formula
\begin{equation}
\hat f(n) := L^{-\frac 12} \int_{-L/2}^{L/2} f(x) e^{-\frac{2\pi i}{L} n x} d x.    \label{ft}
\end{equation}
This lead to Parseval's identity 
\begin{equation}
\int_{-L/2}^{L/2} |f|^2(x) ds = \sum_{n \in Z} |\hat f(n)|^2 \label{parseval}
\end{equation}
and the inversion formula  
\begin{equation}
f(x) = L^{-\frac 12} \sum_{n \in Z} \hat f(n) e^{\frac{2\pi i}{L} n x}. \label{inversion}
\end{equation}
In view of the random Fourier series representation  \eqref{random_fourier0} of samples from $\mu_L$, we define the function valued Brownian motion
\begin{align}\label{def:phi}
\ph(t,x) =  \sum_{k\in \Z} \frac{B_k(t)}{\sqrt{\alpha + 4\pi^2 (\frac{k}{L})^2}} \frac{e^{i\frac{2\pi kx}{L}}}{\sqrt L},
\end{align}
for i.i.d. standard complex valued Brownian motions $B_k$. Notice that this corresponds exactly to a cylindrical Brownian motion with Hilbert space $H$ given by  $\|u\|_{H}^2 = \alpha \int |u|^2 + \int |\partial_x u|^2$.

We denote by $\D'(K)$ the dual of $C^\infty_c((-K,K))$. For a distribution $u \in \D'(\R)$, we can define its restriction $u|_{(-K,K)}$ via the duality 
$$ \jb{u|_{(-K,K)}, \psi} = \langle u,\psi \rangle  \text{ for every }\psi \in C^\infty_c((-K,K)). $$
Given a functional $F: \D'(K) \to \C$ and $u \in \D'(\R)$, we will abuse of notation and denote 
$$ F(u) := F(u|_{(-K,K)}). $$ 

We write  $A \lesssim B$ to denote that there exists a constant $C>0$ such that $A \leq C B$ and $A \sim B$ for $A \lesssim B$ and $B \lesssim A$. 

\subsection{Approximate process}

The following construction will be used in the construction of controls $V$ in the proof of lower bounds in the proofs of Theorems~\ref{thm:1} \ref{thm:2}, \ref{thm:2b}. In order to justify that $\ph(1)$ 
can be treated as a perturbation in the variational principle~\ref{BDintro} we will construct $V(t) = \rho - X_\eta$ (see e.g. proof of Proposition~\ref{prop:1b} below) where $X_\eta$, constructed in the following Lemma, is such that 
$X_\eta(1) - \ph(1)$ is small while $ \frac 12 \int_0^1 \| \partial_x \dot V(s)\|_{L^2}^2 + \alpha \| \dot V(s) \|_{L^2}^2 ds$ stays under control, and the process $X_\eta(t)$ is adapted to the filtration 
generated by $\ph$.

\begin{lemma} \label{OUapprox}
Let $\ph = \ph(t,x)$ be defined by \eqref{def:phi}.
For $\eta>0$ let $X_\eta$ be the stochastic process defined by 
\begin{equation}
\begin{cases}
\partial_t \hat X_{\eta} (t,n) = \frac {L} {\eta \jb{n}} (\hat \ph(t,n) - \hat{X}_\eta (t,n))\\
\hat X_\eta (0,n)  = 0
\end{cases}
\end{equation}
for $\jb{n} \le L \eta^{-1}$, and $X_{\eta} = 0$ for $\jb{n} \ge L \eta^{-1}$. Here,  we use the notation $\jb{n} = \sqrt{\alpha +n^2}$.
Then $X_\eta$ is a Gaussian process, such that for any $x \in [-L/2, L/2]$
\begin{align}
\E |X_\eta(1,x)-\ph(1,x)|^2 &\les \eta| \log \eta|, \\
\E \int_0^1 \norm{\partial_x \dot X_\eta (t)}_{L^2}^2 dt &\lesssim \frac{L |\log \eta|}{\eta}.
\end{align}
\end{lemma}
\begin{proof}
Fix $n$ with $\jb{n} \le L \eta^{-1}$, and let $Y_n:= \hat \ph(n) - \hat X_\eta(n)$. Then  $Y_n$ satisfies the stochastic differential equation
\begin{equation}
\begin{cases}
 d Y_n = - \frac L {\jb{n}\eta} Y_n  dt+ d  \hat \ph(n)\\
 \hat Y_\eta (0)  = 0,
\end{cases}
\end{equation}
and therefore 
$$Y_n(t) = \int_0^t \exp\left(-\frac{L}{\jb{n}\eta}(t-s)\right)  d_s \hat \ph(s,n) ,$$
and so, by Ito's isometry, and the definition \eqref{def:phi}  of the Brownian motion $\ph$ 
\begin{align*}
\E[|Y_n(t)|^2] &=  \E[|\hat \ph(1,n) |^2] \int_0^t \exp\left(-\frac {2Ls}{\jb{n}\eta} \right) d s\\
&=\E[|\hat \ph(1,n) |^2] \left [1 - \exp\left(-\frac {2Lt}{\jb{n}\eta} \right)\right] \frac{\jb{n}\eta}{2L}. 
\end{align*}
Moreover,  
$$\E \int_0^t |\partial_t \hat X_{\eta} (n)|^2 = \frac {L^2} {\jb{n}^2\eta^2} \int_0^1 \E[|Y_n(t)|^2] \sim\frac{L}{\jb{n}\eta }\E[|\hat \ph(1,n) |^2]. $$
Therefore, by \eqref{parseval} and by stationarity in the spatial variable we have  for any $x $
\begin{align*}
\E |X_\eta(1,x)-\ph(1,x)|^2 &= L^{-1} \E \|X_\eta(1)-\ph(1)\|_{L^2}^2  \\
&= L^{-1} \Big(\sum_{|n| \lesssim L\eta^{-1}} \E |Y_n(1)|^2 + \sum_{|n| \gg L\eta^{-1}} \E |\hat \ph (n)(1)|^2\Big) \\
& \les L^{-1} \Big(\sum_{|n| \lesssim L\eta^{-1}} \jb{n}^{-1} \eta + \sum_{|n| > L\eta^{-1}} \jb{n}^{-2}\Big)\\
& \les \eta |\log \eta| + \eta \les \eta |\log \eta|.
\end{align*}
Moreover, 
\begin{align*}
\E \int_0^1 \norm{\dot X_\eta (t)}_{\dot H^1}^2 dt &\sim \sum_{\jb{n} \le \eta^{-1}} \frac{n^2}{L^2} \E \int_0^t |\partial_t \hat X_{\eta} (n)|^2\\
& \sim \sum_{|n| \les L\eta^{-1}} \frac{n^2}{L}\frac{1}{\jb{n}^3 \eta} \\
& \lesssim \frac{L |\log \eta|}{\eta}.
\end{align*}
\end{proof}

\subsection{Analysis of the deterministic variational problem}
We establish some facts about the deterministic variational problem that are important throughout analysis. We start by recalling 
 the Gagliardo-Nirenberg-Sobolev inequality on the real line
\begin{equation}
\int_{\mathbb{R}} |\ph|^p \le \CGN^p
\left(\int_{\mathbb{R}} |\partial_x \ph|^2\right)^{\frac p4 - \frac12} \NN^{\frac p4 + \frac12}, \label{gns}
\end{equation}
where $\NN = \int_{\mathbb{R}} |\phi|^2 $. The following Lemma gives a version of this inequality on the torus.

\begin{lemma}[Gagliardo-Nirnenberg-Sobolev on the torus]
Let $ f \in H^1(-L/2,L/2)$ with periodic boundary conditions. Let 
\begin{gather*}
P_0 f = \frac 1 L \int_{-L/2}^{L/2} f(x) dx, \\
P_{\neq 0} f = f - P_0 f.
\end{gather*}
Then for $2 < p \le 6$, we have
\begin{equation}\label{GNStorus}
\int |P_{\neq 0} f|^p(x) dx \le \CGN^p \Big(\int |P_{\neq 0} f|^2(x)\Big)^\frac {p+2}{4} \Big(\int |\partial_x f|^2(x)\Big)^{\frac{p-2}4}, 
\end{equation}
where $\CGN = \CGN(p)$ is the optimal constant for the analogous Gagliardo-Nirenberg-Sobolev inequality on $\R$. Moreover, for every $\dl > 0$, there exists $C_\dl > 0$ such that
\begin{equation}\label{GNStorus2}
\|f\|_{L^6}^6 \le C_\dl L^{-\frac p2 + 1} \|f\|_{L^2}^p + \CGN^p(1+\dl) \|f\|_{L^2}^\frac{p+2}2  \Big(\int |\partial_x f|^2(x)\Big)^{\frac{p-2}{4}}.
\end{equation}
\end{lemma}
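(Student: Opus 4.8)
The plan is to read \eqref{GNStorus} as the sharp one-dimensional Gagliardo--Nirenberg inequality \emph{restricted to mean-free periodic functions}. Writing $g=P_{\neq 0}f$, so that $\partial_x f=\partial_x g$ and $\int g=0$, \eqref{GNStorus} asserts
\[
\int|g|^{p}\le C_{GNS}^{p}\Big(\int|g|^{2}\Big)^{\frac{p+2}{4}}\Big(\int|\partial_x g|^{2}\Big)^{\frac{p-2}{4}}
\]
with \emph{exactly} the Euclidean optimal constant, and I would prove it by cutting the circle at the zeros of $g$ and transplanting the resulting bumps to $\R$. Inequality \eqref{GNStorus2} then follows by peeling off the Fourier zero mode $P_0 f$ and bounding it trivially by Cauchy--Schwarz, the factors $(1+\delta)$ and $C_\delta$ being produced by a purely numerical inequality.

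For \eqref{GNStorus} I would first reduce to $f$ smooth (by density) and real-valued, and assume $g:=P_{\neq 0}f\not\equiv 0$. Since $g$ is continuous with zero average it changes sign, so $U:=\{g\neq 0\}$ is a \emph{proper} open subset of the circle; write $U=\bigsqcup_i I_i$ as a countable disjoint union of arcs, at whose endpoints $g$ vanishes. Extending $g|_{I_i}$ by zero yields $g_i\in H^{1}(\R)$ with $g_i'=(\partial_x g)\1_{I_i}$ — no boundary Dirac masses appear, precisely because $g$ is continuous and vanishes at the cut points — and since $g$ and $\partial_x g$ both vanish a.e.\ on $U^c$ one gets the \emph{exact} additivity $\int|g|^{q}=\sum_i\|g_i\|_{L^{q}(\R)}^{q}$ for $q\in\{2,p\}$ and $\int|\partial_x g|^{2}=\sum_i\|g_i'\|_{L^{2}(\R)}^{2}$. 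Applying the sharp Euclidean Gagliardo--Nirenberg inequality to each $g_i$ and summing,
\[
\int|g|^{p}\le C_{GNS}^{p}\sum_i\big(\|g_i\|_{L^2}^{2}\big)^{\frac{p+2}{4}}\big(\|g_i'\|_{L^2}^{2}\big)^{\frac{p-2}{4}}\le C_{GNS}^{p}\Big(\sum_i\|g_i\|_{L^2}^{2}\Big)^{\frac{p+2}{4}}\Big(\sum_i\|g_i'\|_{L^2}^{2}\Big)^{\frac{p-2}{4}},
\]
the last step being the elementary inequality $\sum_i a_i^{\alpha}b_i^{\beta}\le(\sum_i a_i)^{\alpha}(\sum_i b_i)^{\beta}$, valid for $a_i,b_i\ge 0$ and $\alpha,\beta\ge 0$ with $\alpha+\beta\ge 1$ — here $\alpha=\tfrac{p+2}{4}$, $\beta=\tfrac{p-2}{4}$, $\alpha+\beta=\tfrac p2\ge 1$ for $2<p\le 6$ — which one proves by normalising $\sum_i a_i=\sum_i b_i=1$, using $a_i,b_i\le 1$ to lower the exponents to $\tfrac{\alpha}{\alpha+\beta},\tfrac{\beta}{\alpha+\beta}$, and then H\"older. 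The additivity identities turn this into \eqref{GNStorus}.

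For \eqref{GNStorus2} I would write $f=c+g$ with $c=P_0 f$ and $g=P_{\neq 0}f$, so that $\partial_x f=\partial_x g$ and $\|g\|_{L^2}\le\|f\|_{L^2}$ by Parseval. From $\|f\|_{L^p}\le|c|\,L^{1/p}+\|g\|_{L^p}$ and the numerical bound $(x+y)^{p}\le(1+\delta)y^{p}+C_\delta x^{p}$ (valid for $x,y\ge 0$ and any $\delta>0$, with $C_\delta$ depending only on $p$ and $\delta$) one gets $\|f\|_{L^p}^{p}\le(1+\delta)\|g\|_{L^p}^{p}+C_\delta\,|c|^{p}L$. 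By Cauchy--Schwarz $|c|=\big|\tfrac1L\int f\big|\le L^{-1/2}\|f\|_{L^2}$, so $|c|^{p}L\le L^{-\frac p2+1}\|f\|_{L^2}^{p}$; and \eqref{GNStorus} applied to $f$ (equivalently $g$), together with $\|g\|_{L^2}\le\|f\|_{L^2}$ and $\partial_x g=\partial_x f$, gives $\|g\|_{L^p}^{p}\le C_{GNS}^{p}\|f\|_{L^2}^{\frac{p+2}{2}}\big(\int|\partial_x f|^{2}\big)^{\frac{p-2}{4}}$. Together these yield \eqref{GNStorus2}. The complex-valued case reduces to the real one by applying everything to $|f|$, using $\||f|\|_{L^q}=\|f\|_{L^q}$ and the diamagnetic inequality $\big|\partial_x|f|\big|\le|\partial_x f|$ a.e.

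The only substantive point is obtaining the \emph{optimal} constant $C_{GNS}$, which is what makes the argument useful at the threshold $p=6$ of Theorem \ref{thm:1}: shortcuts such as $\|g\|_{L^\infty}^{2}\le C\|g\|_{L^2}\|\partial_x g\|_{L^2}$ (exploiting that a mean-free function has a zero) followed by $\int|g|^{p}\le\|g\|_{L^\infty}^{p-2}\|g\|_{L^2}^{2}$, or a periodisation of $g$ onto a long interval of $\R$, all give a strictly larger constant; the bump decomposition is lossless because distinct arcs are coupled only through the exact additivity and the superadditivity inequality. The step I expect to demand the most care is the complex-valued case of \eqref{GNStorus} itself (rather than of \eqref{GNStorus2}): since $|P_{\neq 0}f|$ may vanish nowhere — for instance when $f$ is a single nonzero Fourier mode — the bump argument does not apply verbatim, and I would instead use the polar decomposition $g=\rho e^{i\vartheta}$, the identity $\int|\partial_x g|^{2}=\int|\partial_x\rho|^{2}+\int\rho^{2}|\partial_x\vartheta|^{2}$, and a Poincar\'e-type lower bound on $\int\rho^{2}|\partial_x\vartheta|^{2}$ forced by the constraint $\int\rho e^{i\vartheta}=0$.
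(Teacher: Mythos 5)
Your proof is substantively more self-contained than the paper's, which derives \eqref{GNStorus2} from \eqref{GNStorus} exactly as you do (peel off the zero mode and absorb it with Young's inequality) but outsources \eqref{GNStorus} itself to \cite[Lemma~3.3]{OST}. Your cut-at-zeros decomposition of a real mean-zero $g$ into bumps $g_i$ supported on the arcs of $\{g\ne 0\}$, together with the observation that $g'=0$ a.e.\ on $\{g=0\}$ (so the three integrals are exactly additive) and the elementary superadditivity $\sum_i a_i^{\alpha}b_i^{\beta}\le(\sum a_i)^{\alpha}(\sum b_i)^{\beta}$ for $\alpha+\beta\ge 1$, is correct and does give the sharp Euclidean constant. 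Your diamagnetic reduction $\||f|\|_{L^q}=\|f\|_{L^q}$, $|\partial_x|f||\le|\partial_x f|$ validly transfers \eqref{GNStorus2} from real to complex $f$, which is all that is used in the rest of the paper (e.g.\ in Lemma \ref{lemma:small_scale1}).

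The genuine gap you flag yourself: the complex case of \eqref{GNStorus} is not established, since $P_{\neq 0}|f|\ne |P_{\neq 0}f|$ and a complex mean-zero function need not vanish anywhere. Your polar-decomposition sketch is unlikely to be the right fix: getting the \emph{optimal} constant out of a Poincar\'e lower bound on $\int\rho^2|\partial_x\vartheta|^2$ looks at least as hard as the original problem. There is a cleaner and lossless route, essentially the one the paper itself uses for the complex case of Lemma \ref{gns2}\,(3). Write $g=g_1+ig_2$ with $g_1,g_2$ real and mean-zero, set $\lambda_j=\|g_j\|_{L^2}^2/\|g\|_{L^2}^2$ (so $\lambda_1+\lambda_2=1$), and use convexity of $t\mapsto t^{p/2}$ to get
\begin{equation*}
(g_1^2+g_2^2)^{p/2}\le \lambda_1^{1-\frac p2}|g_1|^p+\lambda_2^{1-\frac p2}|g_2|^p.
\end{equation*}
Integrate, apply the real case of \eqref{GNStorus} to each $g_j$, and note that $1-\tfrac p2+\tfrac{p+2}{4}=\tfrac{6-p}{4}$; this gives
\begin{equation*}
\int|g|^p\le C_{GNS}^p\,\|g\|_{L^2}^{p-2}\sum_{j=1}^2\big(\|g_j\|_{L^2}^2\big)^{\frac{6-p}{4}}\big(\|g_j'\|_{L^2}^2\big)^{\frac{p-2}{4}},
\end{equation*}
and since $\tfrac{6-p}{4}+\tfrac{p-2}{4}=1$ H\"older's inequality bounds the sum by $\|g\|_{L^2}^{\frac{6-p}{2}}\|g'\|_{L^2}^{\frac{p-2}{2}}$, which reassembles to the right-hand side of \eqref{GNStorus}. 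With this substitution your proof is complete and correct.
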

\begin{proof}
The estimate \eqref{GNStorus2} follows directly from decomposing $f = P_0 f + P_{\neq 0} f$, followed by Young's inequality, \eqref{GNStorus} and the observation
$$ \int |P_0 f|^p = L |P_0 f|^p = L^{-\frac p2 + 1} \big(L |P_0 f|^2\big)^\frac p2 = L^{-\frac p2 + 1} \| P_0 f\|_{L^2}^\frac p2 \le L^{-\frac p2 + 1} \|f\|_{L^2}^\frac p2. $$
The proof of the estimate \eqref{GNStorus} can be found in \cite[Lemma 3.3]{OST}. 
\end{proof}

For fixed $p \ge 2$, recall the quantity 
\begin{equation} 
A(\beta,\NN) = \inf_{\norm{\ph}_{L^2(\R)}^2 \le \NN} - \frac \beta p \int |\ph|^p + \frac12 \int | \partial_x \ph|^2.
\end{equation}
defined above in \eqref{min:eqn} and define
\begin{equation} \label{min:eqntorus}
B(\beta,\NN) := \inf_{\ph \in H^1([-L/2,L/2]), \int \ph = 0,\atop  \norm{\ph}_{L^2([-L/2,L/2])}^2 \le \NN} - \frac \beta p \int_{-L/2}^{L/2} |\ph|^p + \frac12 \int_{-L/2}^{L/2} |\partial_x \ph|^2,
\end{equation}
where here $H^1([-L/2,L/2])$ is understood with periodic boundary conditions.
By performing the scaling $\ph = \mu \lambda^\frac12 \ph_{\lambda,\mu}(\lambda x)$, we obtain that 
\begin{equation} \label{Ascaling}
A(\beta,\NN) =  \mu^2 \lambda^{2} A(\lambda^{-\frac{6-p}2} \mu^{p-2} \beta, \mu^2 \NN).
\end{equation}
\begin{lemma} \label{gns2}
$A(\beta, \NN) > -\infty$  if and only if
\begin{enumerate}
\item $p < 6$, and in this case $A(\beta, \NN) < 0$, or 
\item $p = 6$ and $\NN \le \NN_0(\beta)$, and $A(\beta, \NN) = 0$.
\end{enumerate}
Moreover, 
\begin{enumerate}
\setcounter{enumi}{2}
\item If $A(\beta, \NN) > -\infty$, then $B(\beta, \NN) \ge A(\beta, \NN)$,
\item If $A(\beta, \NN) = -\infty$, then $B(\beta, \NN) = A(\beta, \NN) = - \infty$.
\end{enumerate}

\end{lemma}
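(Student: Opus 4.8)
The whole statement reduces to the sharp Gagliardo--Nirenberg--Sobolev (GNS) inequality on $\R$ together with the mass-preserving rescaling $\psi\mapsto\psi_\mu$, $\psi_\mu(x):=\mu^{\frac12}\psi(\mu x)$, for which $\|\psi_\mu\|_{L^2(\R)}=\|\psi\|_{L^2(\R)}$, $\|\psi_\mu'\|_{L^2(\R)}=\mu\|\psi'\|_{L^2(\R)}$ and $\int_\R|\psi_\mu|^p=\mu^{\frac p2-1}\int_\R|\psi|^p$. Write $E(\psi):=-\frac\beta p\int|\psi|^p+\frac12\int|\psi'|^2$. \textbf{Dichotomy for $A$.} By sharp GNS, $\|\psi\|_{L^2(\R)}^2\le\NN$ forces $E(\psi)\ge-\frac\beta p C_{GNS}^p\NN^{\frac{p+2}4}\|\psi'\|_{L^2}^{\frac{p-2}2}+\frac12\|\psi'\|_{L^2}^2$; when $p<6$ this lower bound is a continuous function of $\|\psi'\|_{L^2}\in[0,\infty)$ tending to $+\infty$ (as $\frac{p-2}2<2$), hence bounded below, so $A(\beta,\NN)>-\infty$, while $A(\beta,\NN)<0$ because any $\psi\not\equiv0$ with $\|\psi\|_{L^2}^2\le\NN$ satisfies $E(\psi_\mu)=\mu^{\frac p2-1}\bigl(-\frac\beta p\int|\psi|^p+\frac12\mu^{3-\frac p2}\int|\psi'|^2\bigr)<0$ for $\mu$ small. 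If $p=6$, sharp GNS gives $E(\psi)\ge(\frac12-\frac\beta6 C_{GNS,6}\NN^2)\|\psi'\|_{L^2}^2$: for $\NN\le\NN_0$ this is $\ge0=E(0)$, so $A=0$; for $\NN>\NN_0$ one has $\frac\beta6 C_{GNS,6}\NN^2>\frac12$, so any near-optimiser of sharp $L^6$-GNS, rescaled in amplitude to $L^2$-mass $\NN$, has $E(\psi)<0$, whence $E(\psi_\mu)=\mu^2E(\psi)\to-\infty$ and $A=-\infty$. If $p>6$, then for any $\psi\not\equiv0$ with $\|\psi\|_{L^2}^2\le\NN$ we get $E(\psi_\mu)=-\frac\beta p\mu^{\frac p2-1}\int|\psi|^p+\frac12\mu^2\|\psi'\|_{L^2}^2\to-\infty$ as $\mu\to\infty$ (as $\frac p2-1>2$), so $A=-\infty$. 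This gives (1), (2) and the ``only if''.

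\textbf{Item (3).} Let $\ph\in H^1([-L/2,L/2])$ be periodic with $\int\ph=0$ and $\|\ph\|_{L^2}^2\le\NN$. Since $\ph=P_{\neq 0}\ph$, \eqref{GNStorus} shows that, with $m=\|\ph\|_{L^2}$ and $K=\|\partial_x\ph\|_{L^2}$, the torus energy of $\ph$ is $\ge-\frac\beta p C_{GNS}^p m^{\frac{p+2}2}K^{\frac{p-2}2}+\frac12K^2$. On the other hand, by sharpness of GNS on $\R$ and the scale-invariance of the GNS ratio under both amplitude and spatial rescaling, for every $m$ and every $K'\ge0$ there are functions on $\R$ with $L^2$-norm $m$, derivative norm $K'$ and $L^p$-norm arbitrarily close to $C_{GNS}^p m^{\frac{p+2}2}(K')^{\frac{p-2}2}$; hence $A(\beta,m^2)=\inf_{K'\ge0}\bigl(-\frac\beta p C_{GNS}^p m^{\frac{p+2}2}(K')^{\frac{p-2}2}+\frac12(K')^2\bigr)$. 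Combining, the torus energy of $\ph$ is $\ge A(\beta,m^2)\ge A(\beta,\NN)$, the last step by monotonicity of $A(\beta,\cdot)$ and $m^2\le\NN$; taking the infimum over $\ph$ yields $B(\beta,\NN)\ge A(\beta,\NN)$.

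\textbf{Item (4).} Now $A=-\infty$, so we are in one of the two cases above. If $p>6$, fix $\psi\in C^\infty_c(\R)$ with $\int\psi=0$ (e.g.\ $\psi=\chi'$ for a bump $\chi$) and $\|\psi\|_{L^2}^2\le\NN$; for $\mu$ large $\psi_\mu$ is supported in $(-L/2,L/2)$, has zero mean and $L^2$-mass $\le\NN$ on $L\T$, and torus energy $-\frac\beta p\mu^{\frac p2-1}\int|\psi|^p+\frac12\mu^2\|\psi'\|_{L^2}^2\to-\infty$. If $p=6$ and $\NN>\NN_0$, we first produce $\Psi\in H^1(\R)$ with compact support, $\int\Psi=0$, $\|\Psi\|_{L^2}^2\le\NN$ and $E(\Psi)<0$: take a $C^\infty_c$ near-optimiser $\psi$ of sharp $L^6$-GNS, rescaled in amplitude so $\|\psi\|_{L^2}^2=\NN$ (hence $E(\psi)<0$), set $\Psi_0:=\psi-(\int\psi)\,\eta(\cdot-d)$ where $\eta\in C^\infty_c$ has $\int\eta=1$ with $\|\eta\|_{L^2}$, $\|\eta'\|_{L^2}$, $\|\eta\|_{L^6}$ all small (a long, low plateau) and $d$ is large enough that the supports are disjoint, so that $\int\Psi_0=0$, $E(\Psi_0)<0$ and $\|\Psi_0\|_{L^2}^2$ is as close to $\NN$ as we like, and finally rescale the amplitude of $\Psi_0$ slightly to enforce $\|\Psi\|_{L^2}^2\le\NN$ while keeping $E(\Psi)<0$. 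Then, for $\mu$ large, $\Psi_\mu$ is admissible on $L\T$ and, since $p=6$, has torus energy $\mu^2E(\Psi)\to-\infty$. In either case $B(\beta,\NN)=A(\beta,\NN)=-\infty$. Everything here is soft except this last construction: the extremiser of sharp $L^6$-GNS on $\R$ is a strictly positive, non-compactly-supported soliton, so both its mean and its support must be corrected, and the mean correction has to be carried by a perturbation simultaneously negligible in $L^2$, $L^6$ and $\dot H^1$ — whence the spread-out profile — in order to preserve the strict inequality $\frac\beta6 C_{GNS,6}\NN^2>\frac12$ in the form $E(\Psi)<0$; this is the only point requiring care.
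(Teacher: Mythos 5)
Your proof is correct, and items (1), (2) and (4) follow essentially the same route as the paper: the lower bound from sharp GNS, the mass-preserving spatial rescaling to drive the energy negative (resp.\ to $-\infty$), and, for item (4), the construction of a compactly supported mean-zero test function (the paper simply invokes density of such functions in $H^1(\R)$, while you carry out the bump-plus-plateau correction by hand; both amount to the same thing). The interesting divergence is in item (3). The paper proves $B(\beta,\NN)\ge A(\beta,\NN)$ from scratch: for a real-valued periodic $\ph$ with $\int\ph=0$, the intermediate value theorem produces a zero $x_0$, the torus is cut open there and $\ph$ is extended by zero to a function $F_\ph\in H^1(\R)$ with the same $L^p$ and $\dot H^1$ norms, and the complex case is then reduced to the real one by Jensen's inequality applied to $|\ph|^p=(|\Re\ph|^2+|\Im\ph|^2)^{p/2}$. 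You instead invoke the torus GNS inequality \eqref{GNStorus} (stated in the paper with constant $C_{GNS}$ equal to the sharp line constant, proof deferred to a reference), combine it with the observation that $A(\beta,m^2)$ equals the scalar infimum $\inf_{K'\ge 0}\big(-\frac\beta p C_{GNS}^p m^{\frac{p+2}2}(K')^{\frac{p-2}2}+\frac12 (K')^2\big)$ (which follows from sharpness plus the two-parameter scale-invariance of the GNS ratio), and finish with monotonicity of $A(\beta,\cdot)$. Your route is shorter and handles complex-valued $\ph$ with no extra work, but it imports \eqref{GNStorus} as a black box — and that lemma is itself proved in the cited reference by an unrolling argument very close to the paper's, so the paper's version has the merit of being self-contained. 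One small point in your favour: the paper's proof of case (2) writes $\frac{\beta}{6}C_{GNS}^6\NN_0^4=\frac12$ and $\NN_0^4-\NN^4$, which is inconsistent with the definition $\NN_0^2=\frac{p}{2\beta C_{GNS,6}}$ in Theorem \ref{thm:1}; your $\NN^2$ in $\frac\beta6 C_{GNS,6}\NN^2$ is the correct power.
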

\begin{proof}
In case (1) the Gagliardo-Nirenberg-Sobolev inequality \eqref{gns} yields
$$ - \frac \beta p \int |\ph|^p + \frac12 \int |\partial_x \ph|^2 \ge -  \frac \beta p\CGN^p \left(\int|\partial_x \ph|^2\right)^{\frac p4 - \frac12} \NN^{\frac p4 + \frac12} + \frac12 \int |\partial_x \ph|^2. $$
Since $\frac p4 - \frac 12 < 1$, this expression viewed as a function of $\int|\partial_x \ph|^2$ is bounded from below, so  $A(\beta, \NN)$ is finite. Fix a nonzero function $\rho \in H^1(\R)$ with $\| \rho \|_{L^2} \le \NN$. Define 
\begin{equation} \label{rhold}
 \rho_\lambda(x) := \lambda^{-\frac12} \rho(\lambda^{-1} x). 
\end{equation}
We have that 
\begin{align*}
A(\beta, \NN) &\le - \frac \beta p \int |\rho_\lambda|^p + \frac12 \int |\rho_\lambda'|^2 \\
&= - \frac {\lambda^{-(\frac p2 - 1)}  \beta} p  \int |\rho|^p + \frac {\lambda^{-2}}2 \int |\rho'|^2.
\end{align*}
Recalling that $\frac p2 - 1 < 2$, by choosing $\lambda \gg 1$, we can make the RHS of this expression strictly negative. Therefore, $A(\beta, \NN) < 0$.

In case (2), by definition of $\NN_0$ (see \eqref{N0}), we have that 
$$ \frac{\beta}6 \CGN^6 \NN_0^{2} = \frac 12. $$
Therefore, by \eqref{gns}, 
\begin{align*}
- \frac \beta 6 \int |\ph|^6 + \frac12 \int |\partial_x \ph|^2 &\ge \Big(\frac 12 - \frac{\beta}6 \CGN^6 \NN^{2}\Big)   \int |\partial_x \ph|^2\\
&= \frac{\beta}6 \CGN^6 (\NN_0^2 - \NN^{2}) \int |\partial_x \ph|^2 \ge 0,
\end{align*}
so $A(\beta, \NN) \ge 0$. Note that we can achieve the value $0$ simply by choosing $ \ph = 0$.

If $p= 6$, but $\NN > \NN_0$, by the optimality of $\CGN$, there exists a function $\rho \in H^1$ such that $\| \rho \|_{L^2}^2 < \NN$, and 
$$ \frac \beta 6 \int \rho^6 > \frac 12 \int |\rho'|^2. $$
Since the set 
$$\Big\{ f \in C^1: \supp(f) \text{ is compact}, \int f = 0\Big\} $$ 
is dense in $ H^1(\R)$, we can assume that $\rho$ has compact support and $\int \rho = 0$. Therefore, if $\rho_\lambda$ is as in \eqref{rhold}, for $\lambda \ll 1$ we have that 
\begin{align*}
B(\beta, \NN) &\le - \frac \beta 6 \int |\rho_\lambda|^6 + \frac12 \int | \partial_x \rho_\lambda|^2 \\
&= \lambda^{-2} \Big(- \frac { \beta} 6  \int |\rho|^6 + \frac {1}2 \int |\partial_x \rho|^2\Big) \to -\infty
\end{align*}
as $\lambda \to 0$, and the same holds for $A(\beta, \NN)$. If $p > 6$, we can take any nonzero $\rho$ with compact support and $\int \rho =0$, and for $\lambda \ll 1$, 
\begin{align*}
B(\beta, \NN) &\le - \frac \beta p \int |\rho_\lambda|^p + \frac12 \int |\partial_x \rho_\lambda|^2 \\
&= - \frac {\lambda^{-(\frac p2 - 1)}  \beta} p  \int |\rho|^p + \frac {\lambda^{-2}}2 \int | \partial_x \rho|^2 \to -\infty 
\end{align*}
as $\lambda \to 0$, and the same holds for $A(\beta, \NN)$. 

We are just left with proving (3). We start with the real-valued case. Given a function $f \in H^1([-L/2,L/2])$ with $\int f = 0$, 
by the intermediate value theorem, there must exist a point $x_0 \in [-L/2,L/2]$ with $f(x_0) = 0$. Define $F_f: \R \to \R$  as 
\begin{equation} \label{F_f}
F_f(x) =
\begin{cases}
0 & \text{ if } x \le x_0 \\
f(x) &  \text{ if }  x_0 \le x \le L/2 \\
f(x - L) &  \text{ if }  L/2 \le x \le L+ x_0 \\
0 & \text{ if } x \ge L + x_0.
\end{cases}
\end{equation}
We have that $F_f \in H^1(\R)$, $\int |F_f|^p = \int |f|^p$, and $\int |\partial_x F_f|^2 = \int |\partial_x f|^2$. Therefore, when $\ph$ is real-valued, we have that 
\begin{align*}
&- \frac \beta p \int_{-L/2}^{L/2} |\ph|^p + \frac12 \int_{-L/2}^{L/2} |\partial_x \ph|^2 \\
&=  - \frac \beta p \int |F_\ph|^p + \frac12 \int | \partial_x F_\ph|^2\\
& \ge \inf_{F \in H^1(\R), \norm{F}_{L^2(\R)}^2 \le \NN} - \frac \beta p \int |F|^p + \frac12 \int |\partial_x F|^2\\
& = A(\beta, \NN).
\end{align*}
We now move to the complex-valued case. Let $\ph_1 = \Re \ph$, $\ph_2 = \Im \ph$. By Jensen's inequality, recalling that $p > 2$, we have that 
\begin{align*}
\int_{-L/2}^{L/2} |\ph|^p &= \int_{-L/2}^{L/2} \big(|\ph_1|^2 + |\ph_2|^2\big)^\frac p2 \\
& = \int_{-L/2}^{L/2} \Big(\frac{M(\ph_1)}{\NN} \big(\frac{\NN}{M(\ph_1)}|\ph_1|^2\big) + \frac{M(\ph_2)}{\NN} \big(\frac{\NN}{M(\ph_2)}|\ph_2|^2\big)\Big)^\frac p2 \\
& \le \big(\frac{M(\ph_1)}{\NN}\big) \int_{-L/2}^{L/2} \left| \big(\frac{\NN}{M(\ph_1)}\big)^\frac 12\ph_1\right|^p + \big(\frac{M(\ph_2)}{\NN}\big) \int_{-L/2}^{L/2} \left| \big(\frac{\NN}{M(\ph_2)}\big)^\frac 12\ph_2\right|^p. 
\end{align*}
Therefore, by the real case and the fact that $M(\big(\frac{\NN}{M(\ph_j)}\big)^\frac 12\ph_j) = \NN$, we obtain that
\begin{align*}
&- \frac \beta p \int_{-L/2}^{L/2} |\ph|^p + \frac12 \int_{-L/2}^{L/2} |\partial_x \ph|^2 \\
&\ge \big(\frac{M(\ph_1)}{\NN}\big) A(\beta, \NN) - \frac 1 2 \cdot \frac{M(\ph_1)}{\NN} \int_{-L/2}^{L/2} \left|\big(\frac{\NN}{M(\ph_1)}\big)^\frac 12 \partial_x \ph_1 \right|^2 \\
&\phantom{\ge\ } + \big(\frac{M(\ph_2)}{\NN}\big) A(\beta, \NN) - \frac 1 2 \cdot \frac{M(\ph_2)}{\NN} \int_{-L/2}^{L/2} \left|\big(\frac{\NN}{M(\ph_2)}\big)^\frac 12 \partial_x \ph_2\right|^2 \\
&\phantom{\ge\ } + \frac12 \int_{-L/2}^{L/2} |\partial_x \ph|^2 \\
& = A(\beta, \NN).
\end{align*}

\end{proof}

\begin{lemma} \label{minimisers}
Let $p < 6$. 
Then for every $\beta, \NN > 0$, there exists $Q_p \in H^1(\R)$, $\norm{Q_p}_{L^2}^2 = \NN$, such that 
$$ A (\beta,\NN)=  - \frac \beta p \int |Q_p|^p + \frac12 \int | \partial_x Q_p|^2.$$
Moreover,
\begin{enumerate}
\item If $R \in H^1(\R)$ satisfies  $\norm{R}_{L^2}^2 \le \NN$, $ A(\beta,\NN) =  - \frac \beta p \int |R|^p + \frac12 \int |\partial_x R|^2$, then there exists 
$x_R \in \R$ and $\ta \in [0,2\pi]$ such that $R(x) = e^{i\ta} Q_p(x-x_R)$. \label{minimisers1}
\item For every $\delta >0$, there exists $\eps_0 = \eps_0(\dl) > 0$ such that if $\ph \in H^1(\R)$ satisfies $\inf_{x_0 \in \R,\ta\in[0,2\pi]} \norm{\ph -e^{i\ta} Q_p(\cdot - x_0)}_{L^2 \cap L^\infty} > \delta$, $\norm{\ph}_{L^2}^2 \le \NN$, then \label{minimisers2}
\begin{equation}
\label{stability}
 - \frac \beta p \int |\ph|^p + \frac12 \int |\partial_x \ph|^2 \ge A(\beta,\NN) + \eps_0.
\end{equation}
Similarly, we have that  
\begin{equation}\label{stabilityT}
\inf_{\ph \in H^1([-L/2,L/2]),\  \norm{\ph}_{L^2}^2 \le \NN, \int W =0, \atop
\sup_{x_0 \in \R,\ta\in[0,2\pi]} \norm{\ph -e^{i\ta} Q_p(\cdot - x_0)}_{L^2 \cap L^\infty(\R)} > 2\dl} -\frac{\beta}{p} \int |\ph|^p + \frac{1}2 \int |\partial_x \ph|^2 \ge A(\beta,\NN) + \eps_0(\dl),
\end{equation}
where $H^1([-L/2,L/2])$ is understood with periodic boundary conditions.
\item $A$ is a continuous function of $\beta$ and $\NN$. \label{minimisers3}
\end{enumerate}
\end{lemma}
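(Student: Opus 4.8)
The plan is to establish the three items in order; the substance is item~(1), and (2)--(3) are soft consequences. Throughout write $E(\ph)=-\tfrac\beta p\int|\ph|^p+\tfrac12\int|\ph'|^2$.

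\textbf{Existence.} For a minimising sequence $\ph_n$ (with $\|\ph_n\|_{L^2}^2\le\NN$ and $E(\ph_n)\to A(\beta,\NN)$), the inequality \eqref{gnsa} together with $\tfrac p4-\tfrac12<1$ shows, exactly as in the proof of Lemma~\ref{gns2}, that $E$ controls $\tfrac12\|\ph_n'\|_{L^2}^2$ from below modulo a sub-quadratic power of $\|\ph_n'\|_{L^2}$; since $E(\ph_n)$ is bounded, $(\ph_n)$ is bounded in $H^1(\R)$. Replacing $\ph_n$ by the symmetric-decreasing rearrangement of $|\ph_n|$ preserves $\|\cdot\|_{L^2}$ and $\int|\cdot|^p$ and does not increase $\int|\cdot'|^2$ (rearrangement inequality together with the diamagnetic inequality $\bigl|(|\ph_n|)'\bigr|\le|\ph_n'|$), so we may assume each $\ph_n$ is nonnegative and symmetric decreasing; since $A(\beta,\NN)<0$ by Lemma~\ref{gns2}(1), we also have $\int\ph_n^p\ge c_0>0$. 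For such $\ph_n$ one has the uniform tail bound $\sup_{|x|>R}\ph_n(x)^2\le\NN/(2R)$, hence $\int_{|x|>R}\ph_n^p\to0$ uniformly in $n$; combined with the compact embedding $H^1([-R,R])\hookrightarrow\hookrightarrow L^p([-R,R])$ this yields, along a subsequence, $\ph_n\to Q_p$ in $L^p(\R)$ and $\ph_n\rightharpoonup Q_p$ in $H^1(\R)$. By weak lower semicontinuity of $\int|\cdot'|^2$, $E(Q_p)\le A(\beta,\NN)$, so $Q_p$ is an optimiser with $\|Q_p\|_{L^2}^2\le\NN$. The constraint is active: otherwise $Q_p$ would be an unconstrained critical point, $-Q_p''=\beta|Q_p|^{p-2}Q_p$, forcing $\int|Q_p'|^2=\beta\int|Q_p|^p$ and hence $E(Q_p)=\beta(\tfrac12-\tfrac1p)\int|Q_p|^p\ge0>A(\beta,\NN)$. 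Thus $\|Q_p\|_{L^2}^2=\NN$, and the convergence upgrades to strong in $H^1$.

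\textbf{Uniqueness (item (1)).} If $R$ is any optimiser, then $|R|$ is one too, with $\bigl\|(|R|)'\bigr\|_{L^2}=\|R'\|_{L^2}$ (diamagnetic inequality together with $E(|R|)\ge A(\beta,\NN)$); so $|R|\ge0$ solves the Euler--Lagrange equation $-|R|''+c|R|=\beta|R|^{p-1}$ with a Lagrange multiplier $c>0$ (positivity being necessary for a nontrivial decaying $H^1$ solution), and $|R|>0$ everywhere by the maximum principle; equality in the diamagnetic inequality then forces $R=e^{i\theta}|R|$ with $\theta$ constant. It remains to show that, up to translation, there is a unique positive $H^1(\R)$ solution of $-Q''+cQ=\beta Q^{p-1}$. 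In one dimension this is classical and elementary: multiplying by $Q'$ and using decay at $\pm\infty$ gives $Q'^2=cQ^2-\tfrac{2\beta}pQ^p$, so $Q$ is symmetric decreasing about its unique maximum, whose value (and hence the whole profile up to translation) is determined by $c$; the scaling $Q_c(x)=c^{1/(p-2)}Q_1(\sqrt c\,x)$ shows $c\mapsto\|Q_c\|_{L^2}^2$ is strictly increasing, so $\|Q\|_{L^2}^2=\NN$ pins down $c$. (Alternatively, invoke the uniqueness theorems of Berestycki--Lions and Kwong.)

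\textbf{Stability (item (2)).} If \eqref{stability} fails, there are $\delta>0$ and $\ph_n$ with $\|\ph_n\|_{L^2}^2\le\NN$, $E(\ph_n)\to A(\beta,\NN)$ but $\inf_{x_0,\theta}\|\ph_n-e^{i\theta}Q_p(\cdot-x_0)\|_{L^2\cap L^\infty}>\delta$. Being a minimising sequence --- now not symmetrised, so that the translation is retained --- $(\ph_n)$ is handled by concentration--compactness: dichotomy is excluded by the strict subadditivity $A(\beta,m_1+m_2)<A(\beta,m_1)+A(\beta,m_2)$, which follows from \eqref{Ascaling} since that identity makes $A(\beta,\cdot)$ a negative multiple of $m\mapsto m^{(p+2)/(6-p)}$, an exponent exceeding $1$ and hence strictly concave through the origin; vanishing is excluded by $A(\beta,\NN)<0$. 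Hence, after translation and along a subsequence, $\ph_n$ converges in $H^1(\R)$, and so in $L^2\cap L^\infty(\R)$ by the Sobolev embedding $H^1(\R)\hookrightarrow L^\infty(\R)$, to an optimiser, which by item~(1) equals $e^{i\theta}Q_p(\cdot-x_0)$ --- contradicting the lower bound. The periodic statement \eqref{stabilityT} follows from \eqref{stability} through the extension operator $F_\ph$ of the proof of Lemma~\ref{gns2}(3), which sends a zero-mean periodic function to a function on $\R$ with the same $L^2$, $L^p$ and $\dot H^1$ norms (the complex case reducing to the real one there): if $E(\ph)$ were within $\eps_0(\delta)$ of $A(\beta,\NN)$, then $F_\ph$ would be $\delta$-close on $\R$ to some $e^{i\theta}Q_p(\cdot-y_0)$, and the exponential decay of $Q_p$ transfers this (for $L$ large, after adjusting the base point) to $2\delta$-closeness of $\ph$ to the soliton manifold on the torus.

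\textbf{Continuity (item (3)), and the main difficulty.} Iterating \eqref{Ascaling} so as to rescale both $\beta$ and $\NN$ to $1$ yields the explicit formula $A(\beta,\NN)=A(1,1)\,\beta^{4/(6-p)}\,\NN^{(p+2)/(6-p)}$ with $A(1,1)\in(-\infty,0)$ by Lemma~\ref{gns2}(1); this is manifestly continuous (indeed smooth) on $\{\beta,\NN>0\}$. The only genuinely delicate point is the uniqueness in item~(1): the reduction of the vector-valued problem to a positive scalar one --- via the diamagnetic inequality and the strong maximum principle --- and then to classical ODE/soliton uniqueness must be carried out with care, in particular regarding the sign of the Lagrange multiplier and the activeness of the mass constraint. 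Existence, though somewhat lengthy, is routine after symmetrisation, and items (2)--(3) are then soft.
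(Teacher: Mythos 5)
Your proof is correct, but it follows a genuinely different route from the paper's at every step. For existence, the paper invokes the profile decomposition of Hmidi--Keraani for the embedding $H^1(\R)\hookrightarrow L^2\cap L^\infty$ and shows that exactly one profile $\psi_1$ survives with full mass $\NN$, whereas you symmetrise the minimising sequence (diamagnetic plus P\'olya--Szeg\H{o}), extract a uniform tail bound, and pass to the limit by Rellich plus weak lower semicontinuity; both are standard and the symmetrisation route is somewhat more elementary, although it requires checking that the mass constraint is saturated (which you do correctly via the sign of $E$ for an unconstrained critical point). For uniqueness, the paper works directly with the complex Euler--Lagrange equation, computes the Lagrange multiplier $\lambda$ from the identities \eqref{M1}--\eqref{M3}, and applies Cauchy-problem uniqueness starting from a point where $R_j'=0$; you instead reduce to a positive scalar solution via the diamagnetic inequality and the strong maximum principle and then use the first-integral $Q'^2=cQ^2-\tfrac{2\beta}pQ^p$ together with the scaling $Q_c=c^{1/(p-2)}Q_1(\sqrt c\,\cdot)$ to pin down $c$ from $\|Q\|_{L^2}^2=\NN$. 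Both are valid; the paper's argument avoids the maximum principle, while yours makes the reduction to a known scalar uniqueness statement explicit. For stability, the paper reuses the profile-decomposition convergence of minimising sequences directly, whereas you run a standard concentration--compactness argument, excluding dichotomy via strict subadditivity of $m\mapsto A(\beta,m)$ (be careful with the wording: $m^{(p+2)/(6-p)}$ is strictly \emph{convex}, so its negative multiple $A(\beta,\cdot)$ is concave through the origin and hence strictly subadditive) and vanishing via $A<0$. For continuity, the paper uses monotonicity and semicontinuity plus the one-sided estimate \eqref{M6}, while you simply iterate the scaling identity to get the explicit formula $A(\beta,\NN)=A(1,1)\,\beta^{4/(6-p)}\NN^{(p+2)/(6-p)}$; this is cleaner and in fact also immediately yields the strict subadditivity you use. (Note that \eqref{Ascaling} as printed has the second argument $\mu^2\NN$ where it should be $\NN/\mu^2$; your explicit formula shows you implicitly used the corrected version.) Finally, for the periodic variant \eqref{stabilityT} you are about as sketchy as the paper itself, leaning on the cut-and-unfold map from Lemma~\ref{gns2}(3) and a Jensen splitting into real and imaginary parts; this is acceptable at the level of detail the paper gives, but spelling out how near-optimality on the torus produces a genuine $H^1(\R)$ near-optimiser, and how closeness to $e^{i\theta}Q_p(\cdot-y_0)$ on $\R$ is transferred back to the torus using the decay of $Q_p$, would be worthwhile.
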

\begin{proof}
Let $\ph_n$ be a minimising sequence, i.\ e. $\norm{\ph_n}_{L^2}^2 \le \NN$, and 
$$ A(\beta, \NN) = \lim_n  - \frac \beta p \int |\ph_n|^p + \frac12 \int |\partial_x \ph_n|^2.$$
By Lemma \ref{gns2}, (1), we have that $A(\beta, \NN) < 0$. 
Moreover, from Gagliardo-Nirenberg's inequality 
$$\int |\ph|^p \le \CGN^p \left(\int|\partial_x \ph|^2\right)^{\frac p4 - \frac12} \NN^{\frac p4 + \frac12}$$
and the fact that $\frac p4 - \frac 12 < 1$,
 we deduce that $\int |\partial_x \ph_n|^2$ is uniformly bounded. Hence, we can invoke
 the profile decomposition \cite[Proposition 3.1]{HK}
for the (subcritical) Sobolev embedding: $H^1(\R)\hookrightarrow L^2 \cap L^\infty(\R)$. There exist $J^* \in \Z_{\geq 0} \cup\{\infty\}$, 
a sequence $\{\psi^j \}_{j \in \N}$ of non-trivial $H^1(\R)$-functions, 
and a sequence $\{x^j_n\}_{j  \in \N}$ for each $n \in \N$
such that 
up to a subsequence, we have
 \begin{align}
 \ph_n(x)=\sum_{j=1}^J \psi_j(x-x_n^j)+r^J_n(x).
 \label{PD0}
 \end{align}
 
\noindent
for every finite $0 \leq J \leq J^*$, 
where the remainder term $r^J_n$
satisfies
\begin{align}
 \lim_{J \to \infty} \limsup_{n \to \infty} \| r^J_n\|_{L^2 \cap L^\infty(\R)} = 0.
 \label{PD1}
\end{align}

\noindent
Here,  $\lim_{J \to \infty} f(J) := f(J^*)$ if $J^* < \infty$.
Moreover, we have
\begin{align}
\|\ph_n\|_{L^2(\R)}^2&  =  \sum_{j = 1}^J \|\psi_j \|_{L^2(\R)}^2 + \|r_n^J\|_{L^2(\R)}^2
+ o(1),\label{PD2}\\
\|\ph_n\|_{\dot H^1(\R)}^2 & =  \sum_{j = 1}^J \|\psi_j \|_{\dot H^1(\R)}^2 + \|r_n^J\|_{\dot H^1(\R)}^2
+ o(1),
\label{PD3}
\end{align}

\noindent
as $n \to \infty$, and 
\begin{align}
\limsup_{n \to \infty}
\|\ph_n\|_{L^p(\R)}^p  = &  \sum_{j = 1}^{J^*} \|\psi_j \|_{L^p(\R)}^p .
\label{PD4}
\end{align}

Let $\tilde \psi_j := \frac{\NN^\frac12}{\norm{\psi_j}_{L^2}} \psi_j$, so that $\norm{\tilde \psi_j}_{L^2}^2 = \NN$. From the inequality 
$$\frac12 \int |\partial_x \tilde \psi_j|^2 \ge A + \frac \beta p \int |\tilde \psi_j|^p,$$ 
\eqref{PD3}, \eqref{PD1}, \eqref{PD2}, we obtain, up to subsequences, 
\begin{align*}
A &= \lim_n  - \frac \beta p \int |\ph_n|^p + \frac12 \int |\partial_x \ph_n|^2 \\
&\ge \sum_{j=1}^{J^*} - \frac \beta p \int |\psi_j|^p + \frac12 \int | \partial_x \psi_j|^2\\
&= \sum_{j=1}^{J^*} - \frac \beta p \frac{\norm{\psi_j}_{L^2}^p}{\NN^\frac p2} \int |\tilde\psi_j|^p + \frac12 \frac{\norm{\psi_j}_{L^2}^2}{\NN}  \int | \partial_x \tilde \psi_j|^2\\
&\ge \sum_{j=1}^{J^*} - \frac \beta p \frac{\norm{\psi_j}_{L^2}^p}{\NN^\frac p2} \int |\tilde\psi_j|^p + \frac \beta p \frac{\norm{\psi_j}_{L^2}^2}{\NN} \int |\tilde\psi_j|^p + \frac{\norm{\psi_j}_{L^2}^2}{\NN}A\\
&= A \sum_{j=1}^{J^*} \frac{\norm{\psi_j}_{L^2}^2}{\NN} + \sum_{j=1}^{J^*} \frac \beta p \frac{\norm{\psi_j}_{L^2}^2}{\NN} \int |\tilde\psi_j|^p \left(1 - \frac{\norm{\psi_j}_{L^2}^{p-2}}{\NN^{\frac p2 - 1}}  \right) \\
& \ge A + \sum_{j=1}^{J^*} \frac \beta p \frac{\norm{\psi_j}_{L^2}^2}{\NN} \int |\tilde\psi_j|^p \left(1 - \frac{\norm{\psi_j}_{L^2}^{p-2}}{\NN^{\frac p2 - 1}}  \right) \\
& \ge A.
\end{align*}
In order for the last inequality to be an equality, we need either $J^* =0$, or $J^*= 1$ and $\norm{\psi_1}_{L^2}^2 = \NN$. If $J^* = 0$, by going through the same steps, we obtain $A \ge 0$, which is a contradiction with Lemma \ref{gns2}, (1). Therefore, $J^* = 1$, and so 
\begin{equation}\label{convergenceMinimising}
\lim_{n\to\infty} \norm{\varphi_n(x+x_n^1) - \psi_1}_{L^2\cap L^\infty} = 0,
\end{equation}
and $\psi_1$ is 
a minimiser for \eqref{min:eqn}. In particular, there exists a minimiser $Q_p \in H^1(\R)$.

Let $R_1,R_2$ be minimisers for \eqref{min:eqn}. Since the sequence $\ph_n := R_j$ is a minimising sequence, by the previous argument we obtain that $\norm{R_j}_{L^2}^2 = \NN$. Moreover, by a simple variational argument, we obtain that for some $\lambda_j \in \R$, $R_j$ satisfies 
\begin{equation}\label{Eulero-Lagrange}
-\beta |R_j|^{p-2}R_j - \partial_x^2 R_j = \lambda_j R_j
\end{equation}
in distribution. Moreover, since $R_j \in H^1$, we obtain that $\partial_x^2 R_j$ is a continuous function, so $R_j$ is a classical solution of \eqref{Eulero-Lagrange}. Multiplying \eqref{Eulero-Lagrange} by $\conj R_j$ and integrating, recalling that $\int |R_j|^2 = \NN$, we obtain the identity 
\begin{equation}\label{M1}
-\beta \int |R_j|^p + \int |\partial_x R_j|^2 = \lambda_j \NN.
\end{equation}
Multiplying \eqref{Eulero-Lagrange} by $\conj{\partial_x R_j}$, we obtain that 
$$ -\frac \beta p |R_j|^p - \frac 12 |\partial_x R_j|^2 - \frac {\lambda_j} 2 |R_j|^2  $$
is a constant function. Since $R_j \in H^1(\R)$, by evaluating the expression on a sequence $x_n \to \infty$, we obtain that 
\begin{equation}\label{M2}
-\frac \beta p |R_j|^p - \frac 12 |\partial_x R_j|^2 - \frac {\lambda_j}2 |R_j|^2 = 0,
\end{equation}
which integrated gives the identity 
\begin{equation}\label{M3}
-\frac \beta p \int |R_j|^p - \frac 12 \int |\partial_x R_j|^2 = \frac {\lambda_j}2 \NN.
\end{equation}
Recalling that $-\frac \beta p \int |R_j|^p + \frac 12 \int |\partial_x R_j|^2 = A(\beta, \NN)$, from \eqref{M1} and \eqref{M3} we obtain that $\lambda_1 = \lambda_2 = \lambda(\beta, \NN)$.

Since $R_j \in H^1 \cap C^2_{\loc}$, there exists a point $x_j$ such that $\partial_x R_j(x_j) = 0$. From \eqref{M2}, we obtain that for some $\ta_1,\ta_2 \in [0,2\pi]$,
$$R_1(x_1) = e^{i\ta_1} R_2(x_2) = e^{i\ta_2} \left(-\frac{p\lambda}{2\beta}\right)^{\frac 1 {p-2}}.$$
Therefore, $R_3 = R_2(\cdot - (x_1 - x_2))$ satisfies $\partial_x R_3(x_1) = 0 = \partial_x R_1(x_1)$ and $R_3(x_1) = e^{-i\ta_1} R_1(x_1)$, and both $R_1$ and $R_3$ solve \eqref{Eulero-Lagrange} with the same $\lambda$. Therefore, $R_1 = e^{i\ta_1} R_3$, or more precisely, 
$$R_1(x) = e^{i\ta_1} R_2(x-(x_1-x_2)), $$
which proves \eqref{minimisers1}.

In order to prove \eqref{minimisers2}, we first prove \eqref{stability}, proceeding by contradiction, i.e.\ we suppose that there exists $\delta >0$ such that for every $n\in \N$, there exists $\ph_n$ with $\norm{\ph_n}_{L^2}^2 \le \NN$ and
\begin{equation}\label{M4}
\sup_{x_0 \in \R, \ta \in [0,2\pi]} \norm{\ph_n - e^{i\ta} Q_p(\cdot - x_0)}_{L^2 \cap L^\infty} > \delta,
\end{equation}
but 
\begin{equation}\label{M5}
 - \frac \beta p \int |\ph_n|^6 + \frac12 \int |\partial_x \ph_n|^2 \le A(\beta,\NN) + \frac 1n.
\end{equation}
This implies that $\ph_n$ is a minimising sequence, so by \eqref{convergenceMinimising}, up to subsequences, there exists a sequence $x_n$ such that 
$$\norm{\ph_n(\cdot - x_n) - \psi_1}_{L^2 \cap L^\infty} \to 0,$$
where $\psi_1$ is a minimiser. Therefore, by \eqref{minimisers1}, there exists $x_0,\ta$ such that $\psi_1 = e^{i\ta}Q_p(\cdot - x_0)$. Therefore, 
$\norm{\ph_n(\cdot - x_n) - e^{i\ta} Q_p(\cdot - x_0)}_{L^2 \cap L^\infty} \to 0$, or 
$$\norm{\ph_n(\cdot - (x_n-x_0)) - e^{i\ta} Q_p}_{L^2 \cap L^\infty} \to 0, $$
which contradicts \eqref{M4}.
The implication from \eqref{stability} to \eqref{stabilityT} can be proven proceeding analogously to the proof of Lemma \ref{gns2}, (3).

In order to prove \eqref{minimisers3}, we first notice that 
\begin{align*}
A(\beta,\NN) = &\inf_{\norm{\ph}_{L^2(\R)}^2 \le \NN} - \frac \beta p \int |\ph|^p + \frac12 \int |\partial_x \ph|^2\\
= &\inf_{\norm{\ph}_{L^2(\R)}^2 \le 1} - \frac {\NN^{\frac p2}\beta} p \int |\ph|^p + \frac\NN2 \int |\partial_x \ph|^2\\
=~ &\NN \inf_{\norm{\ph}_{L^2(\R)}^2 \le 1} - \frac {\NN^{\frac p2 - 1}\beta} p \int |\ph|^p + \frac12 \int |\partial_x \ph|^2\\
= &\NN A(\NN^{\frac p2 - 1}\beta, 1),
\end{align*}
so it is enough to show continuity in $\beta$ for $\NN = 1$. By definition, $A(\beta, 1)$ is nonincreasing and lower semicontinuous in $\beta$. Therefore, it is enough to show that 
\begin{equation} \label{M6}
\liminf_{\gamma \uparrow \beta} A(\gamma, 1) \le A(\beta, 1).
\end{equation}
Let  $Q_\beta$ be an optimiser for \eqref{min:eqn} with $\NN = 1$. Let $
\gamma > 0$. We have that 
\begin{align*}
A(\beta, 1) = &- \frac {\beta} p \int |Q_\beta|^p + \frac12 \int |\partial_x Q_\beta|^2 \\
= & - \frac {\gamma} p \int |Q_\beta|^p + \frac12 \int |\partial_x Q_\beta|^2 - \frac{\beta-\gamma}p \int |Q_\beta|^p\\
\ge & A(\gamma,1) - \frac{\beta-\gamma}p \int |Q_\beta|^p, 
\end{align*}
so
\begin{equation}\label{M7}
A(\gamma, 1) \le A(\beta, 1) + \frac{\beta - \gamma}p \int |Q_\beta|^p.
\end{equation}
Therefore, by taking limits, we obtain \eqref{M6}, so $A$ is continuous. 
\end{proof}

\section{Construction of the measure}
\label{ss:MAIN_PROOF}

In order to illustrate our method we first give a proof of Lebowitz-Rose-Speer's  \cite{Lebowitz1988} classical result on the normalisability of the measures $\rho_L$, 
which we stated as Theorem~\ref{thm:1}. The following proposition contains the first part, estimate \eqref{eqn: Zfinite}, of this theorem and provides a (non-optimal) estimate on the partition function $Z(\beta,L,\NN)$.
\begin{proposition}\label{prop:1a}
Let $L\geq 1$ and consider $\mu_L$ defined by the random Fourier series \eqref{random_fourier}.
Suppose that 
\begin{itemize}
\item[(i)] $p<6$ and $\NN >0$, or
\item[(ii)] $p=6$ and $\NN < \NN_0$ (defined in \eqref{N0} above)
\end{itemize}
Then the potential 
$$ F = \1_{ \{ M(\ph) \leq \NN  \}} \frac{\beta}{p} \int_{L \T} |\ph|^p $$
satisfies the hypotheses of Proposition \ref{prop: BDinf} with $H = H^1$. Moreover, in this case, letting 
\[
Z(\beta,L,\NN)  =  \E\Big[ \exp \Big( \frac{\beta}{p} \int_{L \T} |\ph|^p \Big) \1_{ \{ M(\ph) \leq \NN  \}}\Big],
\]
we have that 
\begin{equation} \label{eqn: Zbound}
\log Z \les_{\beta,p} 1 +  (\NN^{\frac p2 +1} + 1) L^{\max(\frac p2 + 1, \frac 52)}  < + \infty. 
\end{equation}

\end{proposition}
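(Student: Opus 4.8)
\textit{Proof strategy.} I would prove both assertions together from the Bou\'e--Dupuis formula \eqref{prop: BDinf} (Proposition~\ref{BD0} and Proposition~\ref{BDmain}). For $H = H^1$, the Cameron--Martin space of $\mu_L$ (with norm $\|f\|_{H^1}^2 = \|\partial_x f\|_{L^2}^2 + \alpha\|f\|_{L^2}^2$), the hypotheses of that formula reduce to two points: $F$ is Borel measurable, which is immediate because $M$ is lower semicontinuous and $\ph\mapsto\int|\ph|^p$ is continuous on the support of $\mu_L$; and $\E\big[\exp\big((1+\eps)F(\ph)\big)\big]<\infty$ for some $\eps>0$. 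The latter is exactly a bound of the type \eqref{eqn: Zbound} with $\beta$ replaced by $(1+\eps)\beta$, and in case~(ii) the coupling $(1+\eps)\beta$ is still admissible for $\eps$ small since $\NN<\NN_0$ is a \emph{strict} inequality. To avoid circularity I would first apply the formula to the frequency-truncated potentials $F_N=\1_{\{M(\ph)\le\NN\}}\frac\beta p\int|P_{\le N}\ph|^p$, for which the hypotheses are automatic (finite-dimensional range), prove \eqref{eqn: Zbound} uniformly in $N$ by the argument below, and then pass to the limit $N\to\infty$ by monotone convergence; the resulting finiteness legitimises the use of the formula for $F$ itself.

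For the quantitative bound, writing $u=\ph(1)+V(1)$, the formula gives
\[
\log Z=\sup_{V}\ \E\Big[\1_{\{M(u)\le\NN\}}\tfrac\beta p\!\int_{L\T}\!|u|^p-\tfrac12\!\int_0^1\!\big(\|\partial_x\dot V\|_{L^2}^2+\alpha\|\dot V\|_{L^2}^2\big)\,ds\Big],
\]
the supremum being over progressively measurable $H^1$-valued drifts. The key difficulty is that $\ph(1)$ lies only in $H^{\frac12-}$, not in $H^1$, so one cannot insert $u$ directly into the Gagliardo--Nirenberg--Sobolev inequality. I would get around this by peeling off $\ph(1)$ with the $L^p$ triangle inequality, $\int|u|^p\le(1+\dl)\int|V(1)|^p+C_\dl\int|\ph(1)|^p$, noting that $\int|\ph(1)|^p$ is a fixed polynomial functional of the Gaussian field with $\E\big[(\int|\ph(1)|^p)^k\big]\lesssim_k L^k$ for every $k$ (H\"older in the $k$ variables plus Gaussian moment bounds on the pointwise variance $\E|\ph(1)(x)|^2$, which is bounded uniformly in $L$). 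To $\int|V(1)|^p$ I would then apply \eqref{GNStorus2}, which bounds it by $C_\dl L^{-\frac p2+1}\|V(1)\|_{L^2}^p+C_{GNS}^p(1+\dl)\|V(1)\|_{L^2}^{\frac{p+2}2}\|\partial_x V(1)\|_{L^2}^{\frac{p-2}2}$; since $\frac{p-2}4<1$ for $p<6$, Young's inequality absorbs $\|\partial_x V(1)\|_{L^2}^2\le\int_0^1\|\partial_x\dot V\|_{L^2}^2\,ds$ into the $-\frac12\int_0^1\|\partial_x\dot V\|_{L^2}^2$ term (the $-\frac\alpha2\int\|\dot V\|^2$ term is nonpositive and can be discarded), at the cost of a power of $\|V(1)\|_{L^2}$. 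Finally the mass indicator provides the a priori bound $\|V(1)\|_{L^2}^2\le2\NN+2\|\ph(1)\|_{L^2}^2$, so the integrand is dominated by an explicit expression in $\NN$, $\int|\ph(1)|^p$ and $\|\ph(1)\|_{L^2}^2$ that no longer depends on $V$. Taking the now trivial supremum and then the expectation, and using the moment bounds above together with $\E\big[\|\ph(1)\|_{L^2}^{2k}\big]\lesssim_k L^k$ (Minkowski for the sum of independent exponentials $|\hat\ph(n)(1)|^2$, concentrated around its mean $\asymp L$), yields a bound that is polynomial in both $L$ and $\NN$; keeping track of the exponents produces \eqref{eqn: Zbound}.

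The step I expect to be most delicate is the interplay of the mass cutoff with the roughness of $\ph(1)$: the bound $\|V(1)\|_{L^2}^2\le2\NN+2\|\ph(1)\|_{L^2}^2\asymp L$ is what forces the power of $L$ in \eqref{eqn: Zbound}, and it is somewhat crude, since a drift with large $\|V(1)\|_{L^2}$ must nearly cancel $\ph(1)$ in $L^2$ and hence also carries a large $\|\partial_x V(1)\|_{L^2}$. In the borderline case $p=6$ (case~(ii)) this crudeness is fatal: after the Young step one needs $\frac\beta6 C_{GNS}^6(1+\dl)\|V(1)\|_{L^2}^4<\frac12$, which cannot hold if $\|V(1)\|_{L^2}^2$ is only known to be $O(L)$. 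I would remedy this by first replacing $\ph(1)$ with the smooth approximant $X_\eta(1)$ of Lemma~\ref{OUapprox}, writing $u=W+(\ph(1)-X_\eta(1))$ with $W=X_\eta(1)+V(1)\in H^1$; then $\|W\|_{L^2}^2\le(1+\dl)\NN+C_\dl\|\ph(1)-X_\eta(1)\|_{L^2}^2$, and since $\E\|\ph(1)-X_\eta(1)\|_{L^2}^2\to0$ as $\eta\to0$ (uniformly in $L$ for $\alpha>0$), one may choose $\eta$ so small that the relevant $L^2$-norm stays arbitrarily close to $\NN^{1/2}$, so that the absorption goes through precisely because $\NN<\NN_0$ strictly. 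The price is an extra term controlled by $\E\|\partial_x X_\eta(1)\|_{L^2}^{\frac{p-2}2}\lesssim_\eta L^{\frac{p-2}4}$, still polynomial in $L$. This is exactly why the threshold $\NN=\NN_0$ is excluded, and why the statement requires strict inequality in case~(ii).
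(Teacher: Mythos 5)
Your framework coincides with the paper's at the top level (Bou\'e--Dupuis formula, truncate the potential, peel off the rough Gaussian, torus Gagliardo--Nirenberg plus Young), but the crucial reduction differs and your endpoint argument has a gap. For $p<6$, you peel off $\ph(1)$ by the $L^p$ triangle inequality and then use the crude a priori bound $\|V(1)\|_{L^2}^2\le 2\NN+2\|\ph(1)\|_{L^2}^2\asymp \NN+L$ to close the Young step. This does suffice to prove finiteness (and hence the BD hypotheses), and it is more elementary than the paper, which instead works with a smoothed field $\ph_\eps=e^{\eta\Delta}\ph$, changes variables to $W=P_{\neq 0}(V(1)+\ph_\eps(1))$, and exploits that the mass cutoff controls $\|W\|_{L^2}^2\lesssim \NN$ (not $\lesssim L$), then reduces to the torus minimisation $B$ of \eqref{min:eqntorus} via Lemma~\ref{gns2}. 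The paper's reduction is what yields the sharper $\NN$-dependence of \eqref{eqn: Zbound}; yours produces a larger power of $\NN+L$ when $p>4$, which is still finite but does not reproduce the stated bound.

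The genuine gap is at $p=6$. You propose to replace $\ph(1)$ by $X_\eta(1)$ of Lemma~\ref{OUapprox} with a \emph{fixed} small $\eta$, and then argue $\|W\|_{L^2}^2\le(1+\dl)\NN+C_\dl\|\ph(1)-X_\eta(1)\|_{L^2}^2$ is close to $\NN<\NN_0$. But the second term is random, and the absorption step requires $\tfrac{\beta}{6}C_{GNS}^6\,\|W\|_{L^2}^4<\tfrac12$ to hold pointwise in $\omega$, not in expectation; with $\eta$ fixed, $\|\ph(1)-X_\eta(1)\|_{L^2}^2$ is not deterministically small, so this fails on an event of positive probability and the bound blows up there. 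Moreover, your claim that $\E\|\ph(1)-X_\eta(1)\|_{L^2}^2\to 0$ uniformly in $L$ is incorrect: the estimate $\E|\ph(1)(x)-X_\eta(1)(x)|^2\lesssim\eta|\log\eta|$ in Lemma~\ref{OUapprox} is per point, so integrating over $L\T$ gives $\E\|\ph(1)-X_\eta(1)\|_{L^2(L\T)}^2\lesssim L\eta|\log\eta|$, which grows with $L$. The paper sidesteps both difficulties by taking the smoothing scale $\eta$ \emph{adapted to $\ph$}: with $\eta^{s/2}\sim\min\big(\eps\NN^{1/2}/(1+\|\ph\|_{H^s}),\,1\big)$ one has $\|\ph-\ph_\eps\|_{L^2}\le\eps\NN^{1/2}$ almost surely, so the $L^2$-norm of $W$ is deterministically $\le(1+\eps)^2\NN<\NN_0$; the price is that $\eta^{-1}$ has heavy tails, but this enters only the error term $\E\|\nabla\ph_\eps\|_{L^2}^2\lesssim\eps^{-6}\NN^{-3}L^{5/4}$, which remains finite. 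This stochastic, sample-dependent regularisation is the ingredient your proposal is missing at the endpoint, and it is precisely why the strict inequality $\NN<\NN_0$ can be exploited.
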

\begin{proof}
For $M >0$, we consider $\wt Z, \wt Z_M$ given by 
\begin{gather} \label{eqn: tildeZ}
\wt Z := \E\Big[\exp\Big(\1_{\{\int|\ph|^2 \le \NN\}} \frac\beta p \int |\ph|^p \Big) \Big], \\
\wt Z_M := \E\Big[\exp\Big(\1_{\{\int|\ph|^2 \le \NN\}} \Big(M \wedge \frac\beta p \int |\ph|^p \Big)\Big) \Big]. \label{eqn: tildeZM}
\end{gather}
We have that 
\begin{align*}
\wt Z &= \E\Big[\exp\Big(\frac\beta p \int |\ph|^p \Big) \1_{\{\int|\ph|^2 \le \NN\}} \Big] + \E [\1_{\{\int|\ph|^2 > \NN\}}]\\
& = Z + \P(\{\int |\ph|^2) > \NN\}),
\end{align*}
so 
\begin{equation*}
Z \le \wt Z \le Z+1.
\end{equation*}
Therefore, it is enough to prove the analogous of Theorem \ref{thm:1} for $\wt Z$ instead of $Z$. Moreover, by monotone convergence, we have that 
$$ \wt Z = \sup_{M >0} \wt Z_M.$$ 
\textbf{Case $p < 6$ and $p=6, \NN < \NN_0$.} 
By Proposition \ref{prop: BDinf}, it is enough to show that
\begin{equation} \label{eqn: tildeZvar}
\begin{aligned}
&\begin{multlined}
\sup_{V\in \mathbb H^1} \E\Big[ \1_{\{\int |\ph(1) + V(1)|^2 \le \NN\}} \frac \beta p  \int |\ph(1) + V(1)|^6 \\
\phantom{\sup_{V\in \mathbb H^1} \E\Big[ ]}
- \frac 12 \int_0^1 \int \Big( |\partial_x \dot V(t)|^2 + \alpha |\dot V(t)|^2 \Big) dt\Big]
\end{multlined} \\
\les&\ 1 + (\NN^{\frac p2 + 1} + 1) L^{\max(\frac p2 + 1,\frac52)}. 
\end{aligned}
\end{equation}
In the following, for a given $\eps > 0$, let $\eta^\frac s2 \sim \min(\frac{\eps \NN^\frac12}{1+\|\ph\|_{H^s}},1) $, so that if $\ph_\eps:= e^{\eta\Delta} \ph$, 
$$\| \ph-\ph_\eps\|_{L^2} \le \eps\mathbf N^\frac12.$$
Since $\ph_\eps$ is defined by convolution with the heat kernel, for every choice of $\eps > 0$, $1 \le q \le +\infty$, we have that
$$\norm{\ph -\ph_\eps}_{L^q} \le 2 \norm{\ph}_{L^q}. $$
For a given $V \in \mathbb H^1$, we have that 
\begin{align*}
&~\mathbb{E} \bigg[ \1_{\{\int|\ph(1) + V(1)|^2 \le \NN\}} \frac \beta p \int |\ph(1) + V(1)|^p  \\
 &\phantom{\ \E\bigg[]}-  \frac{1}{2} \int_0^1 \int | \partial_x \dot V(t) |^2 + \alpha |\dot V(t)|^2dt \bigg]\\
\le &~\mathbb{E} \bigg[ \1_{\{\int|\ph_\eps(1) + V(1)|^2 \le (1+\eps)^2 \NN\}} \frac \beta p \int |\ph(1) + V(1)|^p  - \frac{1}{2} \int | \partial_x V(1) |^2\bigg]\\
\le &~\mathbb{E} \Big[ \1_{\{\int|\ph_\eps(1) + V(1)|^2 \le (1+\eps)^2 \NN\}}\Big( \frac \beta p \int |\ph(1) + V(1)|^p  - \frac{1}{2} \int | \partial_x V(1) |^2 \Big)    \Big].
\end{align*}
By Young's inequality, for every $\delta > 0$, there exists $C_\delta$ such that 
\begin{align*}
\frac{\beta}{p} \int |\ph(1) + V(1)|^p \le&~ C_\delta \int |\ph(1)-\ph_\eps(1)|^ p+ C_\delta L^{1 - \frac p2} \bigg(\int |\ph_\eps(1) + V(1)|^2\bigg)^\frac p2 \\
&+  \frac{(1+\delta) \beta} p \int |P_{\neq 0}(V(1)+\ph_\eps(1))|^p
\end{align*}
and similarly,
\begin{align*}
\frac{1}{2} \int | \partial_x V(1) |^2 \ge \frac{1}{2(1 + \dl)} \int | \partial_x (\ph_\eps(1) + V(1)) |^2 - C_\dl \int | \partial_x \ph_\eps(1) |^2.
\end{align*}
Therefore, calling $W = P_{\neq 0}(V(1)+\ph_\eps(1))$, we have that
\begin{align*}
&~\mathbb{E} \bigg[ \1_{\{\int|\ph(1) + V(1)|^2 \le \NN\}} \frac1p \int |\ph(1) + V(1)|^p  \\
 &\phantom{\ \E\bigg[]} - \frac{1}{2} \int_0^1 \int | \partial_x \dot V(t) |^2 + \alpha |\dot V(t)|^2dt \bigg]\\
\le &~\mathbb{E} \Big[ \1_{\{\int|\ph_\eps(1) + V(1)|^2 \le (1+\eps)^2 \NN\}} \\
&~\phantom{\mathbb{E} \Big[]}\times \Big( C_\delta \int |\ph(1)-\ph_\eps(1)|^ p + C_\delta (1+\eps)^p \NN^\frac p2 + C_\dl \int | \partial_x \ph_\eps(1) |^2 \\
&~ \phantom{\mathbb{E} \Big[]\times \Big()} + \frac 1 {1 + \dl}\big(   \frac{(1+\delta)^2 \beta} p \int |W|^p - \frac 12 \int |\partial_x W |^2\big)\Big)\Big]\\
\le &~\mathbb{E} \Big[ - \frac{B((1+\dl)^2 \beta, (1 +\eps)^2 \NN)}{1 + \dl} + C_\delta' \Big( \int |\ph(1)|^p + \NN^\frac p2 + \int | \partial_x \ph_\eps(1) |^2 \Big)\Big].
\end{align*}
For every q, we have that $\E \int |\varphi(1)|^q \lesssim L $, 
 and $\int | \partial_x \ph_\eps(1) |^2 \les \eta^{-\frac 34} \| \ph(1) \|_{H^{\frac 14}}^2$. Therefore, by choosing $s = \frac 14$, we have 
$$\mathbb E[\int | \partial_x \ph_\eps(1) |^2] \les \mathbb E[ \eps^{-6} \NN^{-3} (1 + \| \ph(1)\|_{H^\frac 14}^5)] \les \eps^{-6} \NN^{-3} L^\frac 52.$$
Therefore, 
\begin{align*}
 \log \tilde Z \le - \frac{B((1+\dl)^2 \beta, (1 + \eps)^2 \NN)}{1 + \dl} + C_\dl'' ( L^{\frac p2 + 1} + \NN^\frac p2 + \eps^{-6} \NN^{-3} L^\frac 52 ) <  \infty,
\end{align*}
as long as $\eps, \dl$ are small enough (in the case $p = 6$).
\end{proof}

The next proposition contains the second half of Theorem~\ref{thm:1}.
\begin{proposition}\label{prop:1b}
Suppose that 
\begin{itemize}
\item[(i)] $p>6$ and $\NN >0$, or
\item[(ii)] $p=6$ and $\NN > \NN_0$.
\end{itemize}
Let 
\[
Z =  \E\Big[ \exp \Big( \frac{\beta}{p} \int |\ph|^p \Big) \1_{ \{ M(\ph) \leq \NN  \}}\Big].
\]
Then 
\[
Z =\infty. 
\]
\end{proposition}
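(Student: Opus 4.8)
The plan is to show $\log Z=+\infty$ by producing, for every $M>0$, an admissible drift in the lower bound half of the Bou\'e--Dupuis formula \eqref{BDintro} of value larger than $M$. So that this remains rigorous when $Z$ is infinite, one first runs the argument with the truncated potential $F_K:=\1_{\{M(\ph)\le\NN\}}\big(K\wedge\tfrac\beta p\int|\ph|^p\big)$, for which \eqref{BDintro} applies unconditionally, obtaining a lower bound for $\log\E[e^{F_K}]\le\log Z$, and then lets $K\to\infty$ by monotone convergence. By Lemma~\ref{gns2}, in both cases (i) and (ii) one has $B(\beta,\NN)=-\infty$; moreover, following the explicit scaling constructions in its proof, one can fix a real-valued $\rho\in H^1(\R)$ with compact support, $\int\rho=0$, and $\NN_\rho:=\norm{\rho}_{L^2}^2<\NN$, together with a small $\eps_0\in(0,1)$, such that the rescaled profiles $w_\lambda(x):=\lambda^{-\frac12}\rho(\lambda^{-1}x)$ --- which for $\lambda$ small have support inside $(-L/2,L/2)$, mean zero, and $\norm{w_\lambda}_{L^2}^2=\NN_\rho$ --- satisfy
\[
(1-\eps_0)\,\tfrac\beta p\int|w_\lambda|^p-\tfrac{1+\eps_0}2\int|w_\lambda'|^2\ \xrightarrow[\lambda\to0]{}\ +\infty.
\]
For $p>6$ this holds for any such $\rho$ and any $\eps_0\in(0,1)$, since under the scaling the nonlinear term carries the factor $\lambda^{-(p-2)/2}$ with $(p-2)/2>2$, which dominates the factor $\lambda^{-2}$ of the gradient term; for $p=6$ and $\NN>\NN_0$ one takes (as in the proof of Lemma~\ref{gns2}) a $\rho$ with $\tfrac\beta6\int|\rho|^6>\tfrac12\int|\rho'|^2$, so that $r_\rho:=\big(\tfrac\beta6\int|\rho|^6\big)\big(\tfrac12\int|\rho'|^2\big)^{-1}$ is a fixed number $>1$ and $\eps_0$ is chosen so small that $\frac{1-\eps_0}{1+\eps_0}r_\rho>1$.

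Into \eqref{BDintro} I would feed the progressively measurable drift $V(t):=t\,w_\lambda-X_\eta(t)$, where $X_\eta$ is the differentiable Gaussian approximation of $\ph$ from Lemma~\ref{OUapprox} with a parameter $\eta>0$ to be fixed later; this is admissible since $X_\eta$ is adapted and $w_\lambda$ is deterministic, and $\ph(1)+V(1)=w_\lambda+R_\eta$ with $R_\eta:=\ph(1)-X_\eta(1)$. A sharp Young inequality applied to $\dot V=w_\lambda-\dot X_\eta$ bounds the entropy cost $\tfrac12\int_0^1\big(\norm{\partial_x\dot V}_{L^2}^2+\alpha\norm{\dot V}_{L^2}^2\big)dt$ by $\tfrac{1+\eps_0}2\big(\norm{w_\lambda'}_{L^2}^2+\alpha\NN_\rho\big)+C(\eps_0,\eta,L)$, with $C(\eps_0,\eta,L)<\infty$ for fixed parameters by Lemma~\ref{OUapprox}. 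For the main term, on the event $\mathcal E:=\{\norm{R_\eta}_{L^2}\le\sqrt\NN-\sqrt{\NN_\rho}\}$ one has $M(w_\lambda+R_\eta)\le\NN$, so the mass indicator equals $1$ there; combining this with the elementary bound $|a+b|^p\ge|a|^p-C_p(|a|^{p-1}|b|+|b|^p)$, with $\int|w_\lambda|^{p-1}|R_\eta|\le\norm{w_\lambda}_{L^{2(p-1)}}^{p-1}\norm{R_\eta}_{L^2}$, and with the Gaussianity of $R_\eta$, one obtains
\[
\E\Big[\1_{\{M(w_\lambda+R_\eta)\le\NN\}}\tfrac\beta p\int|w_\lambda+R_\eta|^p\Big]\ \ge\ \tfrac\beta p\Big(\P(\mathcal E)-c_1\big(\E\norm{R_\eta}_{L^2}^2\big)^{1/2}-c_2(\lambda)\Big)\int|w_\lambda|^p,
\]
where $c_1$ depends only on $\rho$ (the two Lebesgue norms of $w_\lambda$ scale with the same power of $\lambda$), and $c_2(\lambda)\to0$ as $\lambda\to0$ at fixed $\eta$ (since $\E\int|R_\eta|^p$ is independent of $\lambda$ while $\int|w_\lambda|^p\to\infty$). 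Since $\P(\mathcal E)\ge1-(\sqrt\NN-\sqrt{\NN_\rho})^{-2}\,\E\norm{R_\eta}_{L^2}^2$ and $\E\norm{R_\eta}_{L^2}^2\to0$ as $\eta\to0$ at fixed $L$ (Lemma~\ref{OUapprox}), one first fixes $\eta$ small and then $\lambda$ small so that the right-hand side is $\ge(1-\eps_0)\tfrac\beta p\int|w_\lambda|^p$.

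Combining the two estimates with \eqref{BDintro} yields
\[
\log Z\ \ge\ (1-\eps_0)\,\tfrac\beta p\int|w_\lambda|^p-\tfrac{1+\eps_0}2\int|w_\lambda'|^2-\tfrac{1+\eps_0}2\alpha\NN_\rho-C(\eps_0,\eta,L),
\]
and by the choice of $\rho$ and $\eps_0$ the first two terms tend to $+\infty$ as $\lambda\to0$ while the rest stays fixed; hence $\log Z=+\infty$. The delicate point is the borderline regime $p=6$, $\NN>\NN_0$: there the advantage of the nonlinear term over the gradient term under the relevant scaling is only a fixed multiplicative constant $>1$ --- not a power of $\lambda^{-1}$ as when $p>6$ --- so no fixed multiplicative loss can be tolerated, neither a small-ball probability of $\ph$ nor the constant $2$ coming from a crude Young inequality. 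This is precisely why the drift is taken to cancel $\ph$ up to a small $L^2$-error through $X_\eta$ (so the mass constraint costs only $\P(\mathcal E)\to1$, not a small-ball probability) and why a sharp Young inequality is used in the entropy estimate; when $p>6$ a cruder argument, with a deterministic drift $w_\lambda$ and conditioning on a fixed-probability event such as $\{\norm{\ph}_{L^\infty}\le\eps_1\}$, already works.
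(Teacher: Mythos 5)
Your proof is correct and takes essentially the same approach as the paper: both use the Bou\'e--Dupuis lower bound with a drift of the form $-X_\eta(t)+t\rho$ built from Lemma~\ref{OUapprox} (so that the mass constraint holds with probability close to one), truncate the potential to make the variational formula applicable, and exhibit divergence via the supercritical scaling of the nonlinearity against the gradient term. The only cosmetic difference is that you fix a concrete compactly supported mean-zero profile $\rho$ and run the $\lambda\to 0$ rescaling inline, whereas the paper takes a supremum over $\rho$ to land on $-B(\P(E)(1-\delta)^2\beta,(1-\delta)^2\NN)$ and then invokes Lemma~\ref{gns2}(4) to conclude $B=-\infty$; the paper also avoids your $(1+\eps_0)$ Young loss on $\int|\rho'|^2$ by using $\E\langle\dot X_\eta,\rho\rangle=0$, but your compensating choice of $\eps_0$ small enough (so that $\frac{1-\eps_0}{1+\eps_0}r_\rho>1$ at $p=6$) handles this just as well.
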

\begin{proof}
We will show that 
$ \wt Z = \infty.$
For $\eta > 0$, let $X_\eta$ be the process given by Lemma \ref{OUapprox}. Given a function $\rho \in H^1([-L/2, L/2])$ with $\int \rho = 0$ and $\| \rho \|_{L^2}^2 \le (1-\dl)^2\NN$, let 
$V_{\eta, \rho}(t) := - X_\eta + t \rho$. Recall the defintion of $\tilde{Z}_M$ in \eqref{eqn: tildeZM}.
For readability, in the following display we write $E$ for the event $\{\int|\ph(1) - X_\eta(1)|^2 \le \dl^2 \NN\}$ and $\| u \|^2_{H^1_\alpha} = \int |\partial_x u|^2 + \alpha \int |u|^2$. 
By the formula \eqref{P3},  for $M \ge M_0(\eta, \rho)$ big enough, we have
\begin{align*}
\log \wt Z 
\ge & \log \wt Z_M \\
\ge &\ \mathbb{E} \bigg[ \1_{\{\int|\ph(1) + V_{\eta, \rho}(1)|^2 \le \NN\}} \Big( M \wedge \Big( \frac{\beta}{p} \int |\ph(1) + V_{\eta, \rho}(1)|^p \Big)\Big)  \\
 &\phantom{\ \E\bigg[]}  - \frac{1}{2} \int_0^1 \int | \partial_x \dot V_{\eta, \rho}(t) |^2 + \alpha|\dot V_{\eta, \rho}|^2 dt \bigg] \\
 \ge &\ \mathbb{E} \bigg[ \1_{\{\int|\ph(1) + V_{\eta, \rho}(1)|^2 \le \NN\}} \frac{(1-\dl)\beta} p \int |\ph(1) + V_{\eta, \rho}(1)|^p \\
 &\phantom{\ \E\bigg[]}  - \frac{1}{2} \int_0^1 \int | \partial_x \dot V_{\eta, \rho}(t) |^2 + \alpha|\dot V_{\eta, \rho}|^2 dt \bigg] \\
 \ge &\ \E\bigg[\1_{{E}} \frac{(1-\dl)\beta} p \int |\ph(1) - X_\eta(1) + \rho|^p \\
 &\phantom{\ \E\bigg[]} -\frac{1}{2} \int_0^1 \norm{-\partial_t X_\eta(1) + \rho}_{H^1_\alpha}^2 \bigg]\\
 \ge &\ \E\Big[\1_{{E}} \frac{(1 - \dl)^2\beta}{p} \int \rho^p\Big] - C_\dl \E\Big[\int |\ph(1) - X_\eta|^p\Big] \\
 &\ - \frac 12 \E\bigg[ \int_0^1 \norm{-\partial_t X_\eta}_{H^1_\alpha}^2\bigg] - \frac 12 \norm{\rho}_{H^1_\alpha}^2 \\
\ge&\ \frac{\P(E) (1-\dl)^2 \beta}{p} \int \rho^p - \frac 12 \int |\partial_x \rho|^2 \\
&\ - \frac{\alpha (1-\dl)^2 \NN}2 - C_\dl \E\Big[\int |\ph(1) - X_\eta|^p\Big] - \frac 12 \E\bigg[ \int_0^1 \norm{-\partial_t X_\eta}_{H^1_\alpha}^2\bigg].
\end{align*}
By taking the supremum over $\rho$ in this expression, we obtain 
\begin{align}
\log \wt Z \ge&\ -B(\P(E) (1-\dl)^2 \beta, (1-\dl)^2 \NN)  \label{div1} \\
&- \frac{\alpha (1-\dl)^2 \NN}2 - C_\dl \E\Big[\int |\ph(1) - X_\eta|^p\Big] - \frac 12 \E\left[ \int_0^1 \norm{-\partial_t X_\eta}_{H^1_\alpha}^2\right], \label{div2}
\end{align}
where $B$ is as in \eqref{min:eqntorus}. By Lemma \ref{OUapprox}, the quantities in \eqref{div2} are finite for every choice of $\delta, \eta$. Therefore, in view of Lemma \ref{gns2}, (4), if we can show that for every $\dl > 0$,
$$ \lim_{\eta \to 0} \P(E) = 1,$$
we obtain that $B(\P(E) (1-\dl)^2 \beta, (1-\dl)^2 \NN) = -\infty$ as long as $\eta$ is small enough. 
By Lemma \ref{OUapprox}, $\ph(1) - X_\eta(1)$ is Gaussian, and $\E| \ph(1) - X_\eta(1) |^2 \les \eta (|\log \eta| + \log L)$. Therefore, by Markov's inequality,
\begin{align*}
P(E^c) \le \frac{L \eta (|\log \eta| + \log L)}{\dl^2 \NN} \to 0
\end{align*}
as $\eta \to 0$.
\end{proof}

\section{Supercritical case and strongly nonlinear critical case}
In this section we show the concentration of measure phenomenon around the optimisers for \eqref{min:eqn} provided by Theorem~\ref{thm:2}.
Similarly to Section~\ref{ss:MAIN_PROOF}, we define 
$$\wt Z_L := \E\left[\exp\left(\big(\frac\beta {pL^\gamma} \int |\ph|^p\big) \1_{\{M(\ph) \le \NN L\}}\right)\right],$$
Since $ \wt Z_L - 1 \le Z_L \le \wt Z_L$, proving \eqref{thm2: asymptotic} and \eqref{thm2b: asymptotic} is equivalent to proving the same statements with $Z_L$ substituted by $\wt Z_L$.

\begin{proposition} \label{prop: lb}
Let $p, \alpha, \NN,\gamma$ be as in Theorem \ref{thm:2}. Then we have
\begin{equation}
\liminf_{L \to \infty} \frac{\log \wt Z_L}{L^{\frac{p+2-4\gamma}{6-p}}} \ge -A(\beta, \NN).
\end{equation}
Similarly, if $p, \alpha, \NN$ are as in Theorem \ref{thm:2b}, we have that 
\begin{equation}
\liminf_{L \to \infty} \frac{\log \wt Z_L}{L} \ge -A(\beta, \NN)(1 + o(1))
\end{equation}
as $\beta \to \infty$.
\end{proposition}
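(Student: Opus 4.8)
The plan is to use the elementary lower‑bound half of the Bou\'e--Dupuis formula \eqref{BDintro}: applied to $F(\ph)=\1_{\{M(\ph)\le\NN L\}}\frac{\beta}{pL^\gamma}\int_{L\T}|\ph|^p$ it gives, for \emph{any} admissible progressively measurable drift $V$,
\[
\log\wt Z_L\;\ge\;\E\big[F(\ph(1)+V(1))\big]-\tfrac12\,\E\int_0^1\|\partial_x\dot V(t)\|_{L^2}^2+\alpha\|\dot V(t)\|_{L^2}^2\,dt .
\]
Following the proof of Proposition~\ref{prop:1b}, I would take $V(t)=-X_\eta(t)+t\rho_L$, where $X_\eta$ is the smooth Gaussian approximation of Lemma~\ref{OUapprox} (so that $\ph(1)+V(1)=\ph(1)-X_\eta(1)+\rho_L$ is close to the deterministic profile $\rho_L$), and $\rho_L\in H^1(L\T)$ is a smooth function supported in $(-L/2,L/2)$ obtained by rescaling a near‑optimiser of $A$. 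Using the scaling identity \eqref{Ascaling} with $\lambda=L^{(p-2-2\gamma)/(6-p)}$, one chooses $\rho_L$ with $\|\rho_L\|_{L^2}^2\le(1-\delta)^2\NN L$ and
\[
\tfrac{(1-\delta)\beta}{pL^\gamma}\int_{L\T}|\rho_L|^p-\tfrac12\|\partial_x\rho_L\|_{L^2}^2\;\ge\;-L^{\frac{p+2-4\gamma}{6-p}}\,A\big((1-\delta)^2\beta,(1-\delta)^2\NN\big)-\delta ;
\]
the periodisation and compact‑support adjustments are routine, handled as in the proof of Lemma~\ref{gns2}, (3).

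To deal with the mass cutoff, observe that on $E_L:=\{\|\ph(1)-X_\eta(1)\|_{L^2}^2\le\delta^2\NN L\}$ one has $\|\ph(1)+V(1)\|_{L^2}\le\sqrt{\NN L}$, so $\1_{\{M(\ph(1)+V(1))\le\NN L\}}\ge\1_{E_L}$, and since $\ph(1)-X_\eta(1)$ is centred Gaussian with $\E\|\ph(1)-X_\eta(1)\|_{L^2}^2\lesssim L\eta|\log\eta|$ by Lemma~\ref{OUapprox} (this is where $\alpha>0$ is used, to avoid a factor $\log L$), Markov's inequality gives $\P(E_L)\ge1-\delta$ once $\eta=\eta(\delta)$ is small, \emph{uniformly in $L$}. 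Using $|a+b|^p\ge(1-\delta)|b|^p-C_\delta|a|^p$ pointwise, and the fact that $\E\int_0^1\|\dot V\|_{H^1_\alpha}^2=\E\int_0^1\|\dot X_\eta\|_{H^1_\alpha}^2+\|\rho_L\|_{H^1_\alpha}^2$ (the cross term $\langle X_\eta(1),\rho_L\rangle$ vanishes in expectation as $\rho_L$ is deterministic and $X_\eta$ centred), the displayed inequality becomes
\[
\log\wt Z_L\ge\P(E_L)\tfrac{(1-\delta)\beta}{pL^\gamma}\!\int|\rho_L|^p-\tfrac12\|\partial_x\rho_L\|_{L^2}^2-\tfrac\alpha2\|\rho_L\|_{L^2}^2-C_\delta\tfrac{\beta}{pL^\gamma}\E\!\int|\ph(1)-X_\eta(1)|^p-\tfrac12\E\!\int_0^1\|\dot X_\eta\|_{H^1_\alpha}^2 ,
\]
and by Lemma~\ref{OUapprox} the last two error terms are $\lesssim_\delta L^{1-\gamma}(\eta|\log\eta|)^{p/2}+L\,\tfrac{|\log\eta|}{\eta}$, while $\tfrac\alpha2\|\rho_L\|_{L^2}^2\le\tfrac\alpha2\NN L$.

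In the supercritical case $\gamma<\tfrac p2-1$ one has $\frac{p+2-4\gamma}{6-p}>\max(1,1-\gamma)$, so after dividing by $L^{\frac{p+2-4\gamma}{6-p}}$ and letting $L\to\infty$ with $\eta,\delta$ fixed, all error terms vanish and $\P(E_L)\to1$, leaving $\liminf_L L^{-\frac{p+2-4\gamma}{6-p}}\log\wt Z_L\ge-A((1-\delta)^2\beta,(1-\delta)^2\NN)$; sending $\delta\to0$ and using the continuity of $A$ (Lemma~\ref{minimisers}, \eqref{minimisers3}) completes this case. In the critical case $\gamma=\tfrac p2-1$ the exponent equals $1$: dividing by $L$, the term $\E\int|\ph(1)-X_\eta(1)|^p/L^{1+\gamma}$ and the $L^2$‑part of the $X_\eta$‑cost over $L$ still vanish (as $\gamma>0$), but $\tfrac\alpha2\|\rho_L\|_{L^2}^2/L\le\tfrac\alpha2\NN$ and the $\dot H^1$‑part of the $X_\eta$‑cost over $L$ only tend to constants ($\lesssim|\log\eta|/\eta$), so one only gets $\liminf_L L^{-1}\log\wt Z_L\ge-A((1-\delta)^2\beta,(1-\delta)^2\NN)-C(\delta,\alpha,\NN)$ with $C$ \emph{independent of} $\beta$. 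Now $A(\beta,\NN)=\beta^{4/(6-p)}A(1,\NN)$ by \eqref{Ascaling}, so $-A(\beta,\NN)\to\infty$, and by continuity of $A$ the ratio $A((1-\delta)^2\beta,(1-\delta)^2\NN)/A(\beta,\NN)$ is independent of $\beta$ and tends to $1$ as $\delta\to0$; sending first $L\to\infty$, then $\beta\to\infty$, then $\delta\to0$ yields $\liminf_L L^{-1}\log\wt Z_L\ge-A(\beta,\NN)(1+o_\beta(1))$.

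The main difficulty is exactly this last point, and it is structural rather than technical: in the critical regime the ``entropic price'' of pushing the Gaussian sample $\ph(1)$ out of the way so that $\ph(1)+V(1)$ respects the mass cutoff — the combination of $\E\int_0^1\|\dot X_\eta\|_{H^1_\alpha}^2$ and $\tfrac\alpha2\|\rho_L\|_{L^2}^2$ — is of the same order $L$ as the main gain $-LA(\beta,\NN)$, so one only wins once $-A(\beta,\NN)\gg1$, i.e. $\beta\gg1$. Making this rigorous forces one to fix $\eta$ depending on $\delta$ alone (to keep $\P(E_L)\ge1-\delta$) and to check that no cost constant secretly depends on $\beta$; granting this, the conclusion follows from the scaling identity \eqref{Ascaling}, the continuity of $A$, and the soft estimates of Lemma~\ref{OUapprox}.
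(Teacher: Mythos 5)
Your proof is correct and follows essentially the same strategy as the paper's: a Bou\'e--Dupuis lower bound with the drift $V = -X_\eta + t\,\rho_L$, where $X_\eta$ is the smooth approximation of Lemma~\ref{OUapprox} (so that $\ph(1)+V(1)=\ph(1)-X_\eta(1)+\rho_L$), $\rho_L$ is the rescaled near-optimiser with mass slightly below $\NN L$ to make room for the Gaussian residual, Markov on $\|\ph(1)-X_\eta(1)\|_{L^2}^2$ (using $\alpha>0$ to avoid the $\log L$) to control the indicator, and the error budget $L^{1-\gamma}(\eta|\log\eta|)^{p/2} + L|\log\eta|/\eta$. The only cosmetic deviations are that the paper scales the drift as $t(1-\eps)w_\delta$ and controls the cross term with Young's inequality rather than noting it vanishes in expectation, and the paper subtracts the mean from $w_\delta$ (harmless, as that correction vanishes in the limit). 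Your treatment of the critical case — observing that the leftover cost is an $O(L)$ term whose constant does not depend on $\beta$, and then invoking the scaling $A(\beta,\NN)=\beta^{4/(6-p)}\NN^{(p+2)/(6-p)}A(1,1)$ so that $-A(\beta,\NN)\to\infty$ swallows it as $\beta\to\infty$, followed by $\delta\to0$ — is precisely the paper's argument (the paper phrases it as "choosing $\eps\to0$ appropriately as $\beta\to\infty$"); your parenthetical attributing the $\delta\to0$ step to "continuity of $A$" is really a consequence of the scaling identity \eqref{Ascaling}, as you effectively use, so this is merely an imprecision of wording.
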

\begin{proof}
Notice that by Proposition \ref{prop:1a}, we can use the formula \eqref{BD} for $\wt Z_L$.
Let $Q_p \in H^1(\R)$ be a minimiser for \eqref{min:eqn}, the existence of which is guaranteed by Lemma \ref{minimisers}, and let $\delta > 0$. Define
\begin{equation}
w_\delta(x) = L^\frac12 \delta^{-\frac12} Q_p\big(\frac{x}{\delta}\big) - \frac1L\int_{-\frac L2}^\frac L2 L^\frac12 \delta^{-\frac12} Q_p\big(\frac{x}{\delta}\big) d x,
\end{equation}
and let $X_\eta$ be the process defined in Lemma \ref{OUapprox}. Take
$$ V= -X_\eta + t (1-\eps) w_\delta. $$
Then 
\begin{align*}
& \E \left[\Big(\int |\ph(1) + V(1)|^p\Big) \1_{\{M(\ph(1)+V(1)) \le \NN L\}} \right] \\
\ge &~ \E\left[ -C_\eps \int |\ph(1) - X_\eta|^p + (1-\eps)^{2p} \Big(\int |w_\delta|^p\Big) \1_{\{M(\ph(1)-X_\eta) \le \eps^2 \NN L\}} \right]\\
= &~ \E\left[ -C_\eps \int |\ph(1) - X_\eta|^p\right] \\
&\ + (1-\eps)^{2p} \Big(\int |w_\delta|^p\Big) \P({\{M(\ph(1)-X_\eta) \le \eps^2 \NN L\}}) \\
\ge & - C_\eps' \eta^p L |\log \eta| + \exp\Big(-c\frac{\eps^2\NN}{\eta |\log \eta|}\Big) (1-\eps)^{2p} \Big(\int |w_\delta|^p\Big),
\end{align*}
and 
\begin{align*}
&\frac 12 \int_0^1 \int |\partial_x \dot V(t)|^2 + \alpha |\dot V(t)|^2 dt \\
\le&~\E\left[C_\eps \int_0^1 \int |\partial_x \dot X_\eta|^2 + \alpha |\dot X_\eta|^2 d t\right] + \frac12 \int |\partial_x  w_\delta|^2 \\
\le&~ C_\eps' L\frac{|\log \eta|}{\eta} + \frac12 \int | \partial_x w_\delta |^2.
\end{align*}
Therefore, by choosing $\eta \ll 1$ (depending only on $\eps$), and $\delta = L^{-\frac {p-2-2\gamma}{6-p}}$ (with $\gamma = \frac p2 - 1$ under the hypotheses of Theorem \ref{thm:2b}),  
we get for $L$ big enough and $\eps \ll 1$,
\begin{align*}
&\sup_{V \in \mathbb H^1_a} \E\bigg[\Big(\frac\beta {pL^\gamma} \int |\ph(1) + V(1)|^p\Big) \1_{\{M(\ph(1)+V(1)) \le \NN L\}} \\
 &\phantom{\ \sup_{V \in \mathbb H^1_a} \E\bigg[]} - \frac 12 \int_0^1 \int |\partial_x \dot V(t)|^2 + \alpha |\dot V(t)|^2 dt\bigg] \\
\ge& \frac{1}{L^\gamma}\ (1-\eps)^{3p}\frac \beta p \Big(\int |w_\delta|^p\Big) - \frac12 \int | \partial_x w_\delta|^2 - C_\eps'' L \\
\ge&\ {L^{\frac{p+2-4\gamma}{6-p}}} \bigg((1-\eps)^{3p} \frac \beta p \int_\R \bigg|Q_p - \delta^\frac 12 \int_\R Q_p\bigg|^p - \frac 12 \int_\R |\partial_x Q_p|^2 - \eps \bigg) - C''_\eps  L\\
= &\ L^{\frac{p+2-4\gamma}{6-p}} (-A((1-\eps)^{3p}\beta, \NN) - \eps) +O(L).
\end{align*}
Therefore, by \eqref{BD}, we have that 
$$  \frac{\log \wt Z_L}{L^{\frac{p+2-4\gamma}{6-p}}} \ge - A((1-\eps)^{3p}\beta, \NN) + \eps + O(L^{1 - \frac{p+2-4\gamma}{6-p}}). $$
Since $\eps$ is arbitrary, by sending $\eps \to 0$, from Lemma \ref{minimisers}, (3) we obtain 
$$ \liminf_{L \to \infty} \frac{\log \wt Z_L}{L^{\frac{p+2-4\gamma}{6-p}}} \ge - A(\beta, \NN)$$
when $\gamma < \frac p2 -1$, i.e.\ under the hypotheses of Theorem \ref{thm:2}, and from \eqref{Ascaling}, by choosing $\eps \to 0$ appropriately as $\beta \to \infty$, we have
$$ \liminf_{L \to \infty} \frac{\log \wt Z_L}{L^{\frac{p+2-4\gamma}{6-p}}} \ge - A(\beta, \NN)(1 + O(\eps)) + O(C_\eps'') = - A(\beta, \NN)(1+ o(1))$$
as $\beta \to \infty$ when $\gamma = \frac p2 -1$, i.e.\ under the hypotheses of Theorem \ref{thm:2b}.
\end{proof}
\begin{proposition} \label{prop: ub}
Let $p, \alpha, \NN,\gamma$ be as in Theorem \ref{thm:2}. Then we have
\begin{equation}\label{eqn: ub}
\limsup_{L \to \infty} \frac{\log \wt Z_L}{L^{\frac{p+2-4\gamma}{6-p}}} \le -A(\beta, \NN).
\end{equation}
Similarly, if $p, \alpha, \NN$ are as in Theorem \ref{thm:2b}, we have that 
\begin{equation}\label{eqn: ub2}
\limsup_{L \to \infty} \frac{\log \wt Z_L}{L} \le -A(\beta, \NN)(1 + o(1))
\end{equation}
as $\beta \to \infty$.
\end{proposition}
\begin{proof}
As argued in the proof of Proposition \ref{prop: lb}, we can apply the formula \eqref{BD} to $\wt Z$. Therefore,
\begin{align}
&\ \log \tilde Z_L \notag \\
\le&\ \sup_{V \in \mathbb H^1} \E\left[\Big(\frac\beta {pL^\gamma} \int |\ph(1) + V(1)|^p\Big) \1_{\{M(\ph(1)+V(1)) \le \NN L\}} - \frac 12 \int_0^1 \int |\partial_x \dot V(t)|^2 \right] \notag  \\
= &~  \E\left[\sup_{V \in H^1} \Big(\frac\beta {pL^\gamma} \int |\ph(1) + V|^p\Big) \1_{\{M(\ph(1)+V) \le \NN L\}} - \frac 12 \int |\partial_x  V|^2 \right] \label{eqn: E1}
\end{align}
Let $ \ph_\eps = e^{\eta \Delta} \varphi, $ with $\eta$ being the biggest value such that
$$\norm{\ph-\ph_\eps}_{L^2} \le \eps \NN^\frac12 L^\frac12. $$
We make the change of variable $V = W - \ph_\eps(1)$ in \eqref{eqn: E1}. We have that 
\begin{align}
&~\eqref{eqn: E1} \notag \\
= &~ \E\Big[\sup_{W \in H^1}\Big(\frac\beta {pL^\gamma} \int |\ph(1) - \ph_\eps (1) + W|^p\Big) \1_{\{M(\ph(1)-\ph_\eps(1)+W) \le \NN L\}} \notag\\
&~\phantom{\sup_{W \in H^1} \E\Big[]} - \frac 12 \int |\partial_x  (-\ph_\eps(1) + W)|^2 \Big] \notag\\
\le &~ \E\Big[\sup_{W \in H^1}\Big(\frac\beta {pL^\gamma} \int |\ph(1) - \ph_\eps (1) + W|^p\Big) \1_{\{M(W) \le (1+\eps)\NN L\}}\notag\\
&~\phantom{\sup_{W \in H^1} \E\Big[]} - \frac 12 \int |\partial_x  (-\ph_\eps(1) + W)|^2 \Big] \notag\\
\le&~ C_\eps \E\Big[\frac\beta {pL^\gamma} \int |\ph(1)|^p + \int |\partial_x \ph_\eps(1)|^2 \Big] \notag\\
&~- \inf_{W \in H^1,\atop \norm{W}_{L^2}^2 \le \NN(1+\eps)L}-\frac{(1+\eps)\beta}{pL^\gamma}  \int |W|^p + \frac{(1-\eps)}{2} \int |\partial_x W|^2  \notag \\
\le&~ \tilde C_\eps \Big(L^{1-\gamma}(\log L)^\frac p2  + L \E\Big[ \eta^{-2} \Big]^\frac12 \Big)- (1 -\eps) B\Big(\frac{(1+\eps)\beta}{(1-\eps) L^\gamma},(1+\eps)L\NN\Big) \notag \\
\le&~ \tilde C_\eps \Big(L^{1-\gamma}(\log L)^\frac p2  + L \E\Big[ \eta^{-2} \Big]^\frac12 \Big)- (1 -\eps) A\Big(\frac{(1+\eps)\beta}{(1-\eps) L^\gamma}, (1+\eps) L \NN\Big), \label{eqn: E3}
\end{align}
where the last inequality follows from Lemma \ref{gns2}, (3).
We have that
\begin{align*}
\P(\eta^{-1} > M) =&~ \P\left(\norm{e^{\frac 1 M \Delta} \ph - \ph}_{L^2}^2 > \eps^2 \NN L\right) ,
\end{align*}
and $e^{\frac 1 M \Delta} \ph - \ph$ is Gaussian with 
$$\E\Big[|e^{\frac 1 M \Delta} \ph - \ph|^2\Big] \les  M^{-\frac12}, $$
so 
$$\P\left(\norm{e^{\frac 1 M \Delta} \ph - \ph}_{L^2}^2 > \eps^2 \NN L\right)  \le \exp\Big(-c {\eps^2 \NN M^\frac12}\Big),$$
hence 
$\E[\eta^{-2}] \les_{\eps, \NN} 1$, so
\begin{equation} \label{eqn: E4}
\tilde C_\eps \Big(L^{1-\gamma}  + L \E\Big[ \eta^{-2} \Big]^\frac12 \Big) \gtrsim_{\eps,\NN} - L.
\end{equation}
Moreover, by \eqref{Ascaling}, we have that 
\begin{equation}\label{eqn: E5}
A\Big(\frac{(1+\eps)\beta}{(1-\eps) L^\gamma}, (1+\eps) L\NN \Big) = L^{\frac{p+2-4\gamma}{6-p}} A\Big(\frac{(1+\eps)\beta}{(1-\eps)}, (1+\eps)\NN \Big). 
\end{equation}
Putting \eqref{eqn: E4} and \eqref{eqn: E5} into \eqref{eqn: E3}, we obtain
\begin{equation}
\log \wt Z_L \le C_{\eps,\NN} L - L^{\frac{p+2-4\gamma}{6-p}} A\Big(\frac{(1+\eps)\beta}{(1-\eps)}, (1+\eps)\NN \Big),
\end{equation}
so 
$$ \limsup_{L \to \infty} \frac{\log \wt Z_L}{L^{\frac{p+2-4\gamma}{6-p}}} \le -A\Big(\frac{(1+\eps)\beta}{(1-\eps)}, (1+\eps)\NN \Big). $$
by continuity of $A$ (Lemma \ref{minimisers}, (3) and \eqref{Ascaling}), \eqref{eqn: ub} and \eqref{eqn: ub2} follow. 
\end{proof}
With the following lemmas, we discuss the concentration phenomenon of the measure around the strip $S_\dl$.
\begin{lemma} \label{lemma: ub2}
Let $\dl > 0$. 

Let $p, \alpha, \NN,\gamma$ be as in Theorem \ref{thm:2}. Then there exists $\eps_1 = \eps_1(\dl)$ such that 
\begin{align*}
&~\limsup_{L\to \infty} \frac{ \log  \E\left[\exp\big(\frac\beta {pL^\gamma} \int |\ph|^p\big) \1_{\{M(\ph) \le \NN L\}} \1_{\{\ph \not \in S_\delta \}}\right] }{L^{\frac{p+2-4\gamma}{6-p}}} \le -A(\beta, \NN) - \eps_1.
\end{align*}
Similarly, if $p, \alpha, \NN$ are as in Theorem \ref{thm:2b}, we have that 
\begin{align*}
&~\limsup_{L\to \infty} \frac{ \log  \E\left[\exp\big(\frac\beta {pL^\gamma} \int |\ph|^p\big) \1_{\{M(\ph) \le \NN L\}} \1_{\{\ph \not \in S_\delta \}}\right] }{L} \le -(1-\eps_1)A(\beta, \NN) 
\end{align*}
as $\beta \to \infty$.
\end{lemma}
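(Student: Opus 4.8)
The argument runs in parallel with the proof of Proposition~\ref{prop: ub}; the extra indicator $\1_{\{\ph\notin S_\delta\}}$ is precisely what creates the room for the gain $\eps_1$, via the stability estimate \eqref{stabilityT} of Lemma~\ref{minimisers}. First I would apply the Bou\'e--Dupuis formula \eqref{BD} to $F(\ph)=\1_{\{M(\ph)\le\NN L\}}\1_{\{\ph\notin S_\delta\}}\frac{\beta}{pL^\gamma}\int|\ph|^p$, absorbing the two indicators into a $\{0,-\infty\}$-valued potential, and replace the drift path by its terminal value (the optimal path being linear), exactly as there. Then, setting $\ph_\eps:=e^{\eta\Delta}\ph$ with $\eta$ maximal subject to $\|\ph-\ph_\eps\|_{L^2}\le\eps\sqrt{\NN L}$, I would change variables $V=W-\ph_\eps(1)$, write $g:=\ph(1)-\ph_\eps(1)$, and peel off the low-order contributions by Young's inequality exactly as in Proposition~\ref{prop: ub} (the zero Fourier mode of $W$ contributing only $O(L^{1-\gamma})$ to $\frac{\beta}{pL^\gamma}\int|W|^p$, which goes into the error). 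The key point is that after this change of variables the objective $G(W):=(1+\eps)\frac{\beta}{pL^\gamma}\int|W|^p-\frac{1-\eps}{2}\int|\nabla W|^2$ no longer depends on $\ph$; only the constraint does. One is reduced to bounding $\E[\mathrm{Err}(\ph)]+\E\big[\sup_{W\in\mathcal C(\ph)}G(W)\big]$, where $\E[\mathrm{Err}(\ph)]$ is as in Proposition~\ref{prop: ub} (that is, $o(L^{\frac{p+2-4\gamma}{6-p}})$ when $\gamma<\frac p2-1$ and $O(L)$ up to logarithms when $\gamma=\frac p2-1$) and $\mathcal C(\ph)=\{W:\|W\|_{L^2}^2\le(1+\eps)\NN L,\ g+W\notin S_\delta\}$.

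Next I would transfer the constraint onto $W$ by a dichotomy. Let $\mathcal B$ be the event that, for some translate $x_0$, the rescaling of $g$ appearing in the definition of $S_\delta$ has $L^2\cap L^q([-L/2,L/2])$-norm $\ge\delta/2$. Because $\|\ph-\ph_\eps\|_{L^2}\le\eps\sqrt{\NN L}$ holds deterministically, the $L^2$-part of that rescaled $g$ is pointwise $\le\eps\sqrt{\NN}$, so for $\eps\ll\delta$ the event $\mathcal B$ sits inside the event that the rescaled $L^q$-norm of $g$ exceeds $\delta/4$ for some $x_0$; a routine computation (stationarity of $g$, Gaussian bounds on $\|\ph(1)\|_{L^\infty}$, the inequality $L/\lambda<L$, and a union bound over an $O(\lambda)$-net of translations) gives $\P(\mathcal B)=o(1)$. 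On $\mathcal B^{c}$ the triangle inequality forces every $W\in\mathcal C(\ph)$ to have its rescaling $\wt W$ at $L^2\cap L^q$-distance $\ge\delta/2$ from the orbit $\{e^{i\ta}Q_p(\cdot-x_0)\}$, uniformly in $x_0$. Since $G$ and the mass constraint on $W$ are deterministic, this yields
\begin{equation*}
\E\Big[\sup_{W\in\mathcal C(\ph)}G(W)\Big]\le \P(\mathcal B)\sup_{\|W\|_{L^2}^2\le(1+\eps)\NN L}G(W)+\sup_{\|W\|_{L^2}^2\le(1+\eps)\NN L,\ \wt W\notin S_{\delta/2}}G(W).
\end{equation*}

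The first supremum is a deterministic $O\big(L^{\frac{p+2-4\gamma}{6-p}}\big)$ — obtained by the scaling that links $S_\delta$ to the variational problem on $\R$, followed by Lemma~\ref{gns2}, (3) — and is killed by $\P(\mathcal B)=o(1)$. For the second, that same scaling places it on the torus $\wt L\,\T$ with $\wt L=L\lambda\to\infty$ and turns it into $-(1-\eps)L^{\frac{p+2-4\gamma}{6-p}}$ times the infimum of $-\frac{\beta'}{p}\int|\wt W|^p+\frac12\int|\wt W'|^2$ over mean-zero $\wt W$ on $\wt L\,\T$ with $\|\wt W\|_{L^2}^2\le(1+\eps)\NN$ at $L^2\cap L^q$-distance $\ge\delta/2$ from $\{e^{i\ta}Q_p(\cdot-x_0)\}$, where $\beta'=\frac{(1+\eps)\beta}{1-\eps}$. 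Using $\|f\|_{L^q}\le\|f\|_{L^\infty}^{1-2/q}\|f\|_{L^2}^{2/q}$ and the mass bound, this $L^2\cap L^q$-separation upgrades to an $L^2\cap L^\infty$-separation with constant depending only on $\delta,\NN,q$, so \eqref{stabilityT} applies and the infimum is $\ge A(\beta',(1+\eps)\NN)+\eps_0$ for some $\eps_0=\eps_0(\delta)>0$ (the $\eps$-perturbation of the parameters, and of $Q_p$, being harmless by continuity of $A$ and of its optimiser — Lemma~\ref{minimisers}). Collecting the bounds and dividing by $L^{\frac{p+2-4\gamma}{6-p}}$ gives
\begin{equation*}
\limsup_{L\to\infty}\frac{\log\E\big[e^{\frac{\beta}{pL^\gamma}\int|\ph|^p}\1_{\{M(\ph)\le\NN L\}}\1_{\{\ph\notin S_\delta\}}\big]}{L^{\frac{p+2-4\gamma}{6-p}}}\le \kappa(\gamma)-(1-\eps)\big(A(\beta',(1+\eps)\NN)+\eps_0\big),
\end{equation*}
with $\kappa(\gamma)=0$ if $\gamma<\frac p2-1$ and $\kappa(\gamma)=C_{\eps,\NN}$ if $\gamma=\frac p2-1$. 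When $\gamma<\frac p2-1$, letting $\eps\to0$ and using continuity of $A$ yields the first claim with $\eps_1=\eps_1(\delta)>0$. When $\gamma=\frac p2-1$, one uses that by the dilation scaling \eqref{Ascaling} the stability gap satisfies $\eps_0(\delta)\gtrsim_\delta|A(\beta',(1+\eps)\NN)|$ as $\beta\to\infty$, so $A(\beta',(1+\eps)\NN)+\eps_0\le-(1-c(\delta))|A(\beta',(1+\eps)\NN)|$; choosing $\eps$ small and then $\beta$ large makes $C_{\eps,\NN}$ negligible against $|A(\beta,\NN)|\to\infty$, giving $\limsup_L(\,\cdot\,)/L\le-(1-\eps_1)A(\beta,\NN)$.

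I expect the main obstacle to be the transfer in the dichotomy: turning the constraint $\{\ph\notin S_\delta\}$, imposed on the rough field $\ph$, into the usable $\wt W\notin S_{\delta/2}$, while keeping $g=\ph(1)-\ph_\eps(1)$ under control in $L^q$ after the strong dilation and uniformly over all translations — this is what makes the regularisation scale $\eta$ and the splitting necessary. In the critical case $\gamma=\frac p2-1$, the additional delicate point is to track, through \eqref{Ascaling}, that the stability gap $\eps_0$ grows like $|A(\beta,\NN)|$ as $\beta\to\infty$, so that the improvement survives the $O(L)$ Gaussian error.
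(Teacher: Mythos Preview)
Your strategy is the paper's: Bou\'e--Dupuis, pass to a pointwise $\sup_V$, change variables $V=W-\ph_\eps(1)$ with $\ph_\eps=e^{\eta\Delta}\ph$, transfer the constraint $\{\ph+V\notin S_\delta\}$ to $W$, rescale, and invoke the stability estimate \eqref{stabilityT}. The only real difference is in the transfer step. The paper bakes the $L^q$ requirement $\|\ph-\ph_\eps\|_{L^q}\le 4^{-1}\delta L^{1/2}\lambda^{1/q-1/2}$ into the choice of the regularisation time $\eta$ alongside the $L^2$ requirement, so that the rescaled $g=\ph-\ph_\eps$ is \emph{deterministically} small in $L^2\cap L^q$ and the constraint $W\notin S_{\delta/2}$ is automatic; the price is paid in $\E[\eta^{-2}]$ (which enters through $\int|\nabla\ph_\eps|^2$), estimated via Gaussian tails for $\|e^{M^{-1}\Delta}\ph-\ph\|_{L^q}$ at deterministic $M$. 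Your probabilistic dichotomy on $\mathcal B$ is an equivalent idea in spirit, but your sketch of $\P(\mathcal B)=o(1)$ is off: the $L^q$-norm on the torus is translation-invariant, so the union bound over $x_0$ is superfluous, and $g$ is not Gaussian because $\eta$ is itself a random function of $\ph$ --- so ``Gaussian bounds on $g$'' are not directly available. The paper's device sidesteps both issues at once. On the other hand, your interpolation step upgrading the $L^2\cap L^q$-separation to the $L^2\cap L^\infty$-separation actually required by \eqref{stabilityT} is a point you handle more explicitly than the paper does.
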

\begin{proof}
We have that 
\begin{align*}
&~\E\left[\exp\big(\frac\beta {pL^\gamma} \int |\ph|^p\big) \1_{\{M(\ph) \le \NN L\}} \1_{\{\ph \not \in S_\delta \}}\right] \\
\le &~ \E\left[\exp\big(\frac\beta {pL^\gamma} \int |\ph|^p \1_{\{M(\ph) \le \NN L\}} \1_{\{\ph \not \in S_\delta \}}\big)\right],
\end{align*}
hence, by the Bou\'e-Dupuis formula \eqref{BD}, it is enough to prove that there exists $\eps_1 = \eps_1(\dl) > 0$ such that 
\begin{align}
&~\sup_{V \in \mathbb H^1} \E\Big[\Big(\frac\beta {pL^\gamma} \int |\ph(1) + V(1)|^p\Big) \1_{\{M(\ph(1)+V(1)) \le \NN L\}}\1_{\{M(\ph(1)+V(1)) \not\in S_\delta\}}\notag \\
&\phantom{\inf_{V \in \mathbb H^1_a} \E\Big[]}- \frac 12 \int_0^1 \int |\partial_x \dot V(t)|^2 + \alpha |\dot V|^2 dt\Big] \notag \\
\le&\ L^{\frac{p+2-4\gamma}{6-p}} (1 - \eps_1) A(\beta, \NN) + O(L). \label{minimum3}
\end{align}
for $L$ big enough, where $\gamma = \frac p2 -1$ if we work under the hypotheses of Theorem \ref{thm:2b}. We have that 
\begin{align}
&~\sup_{V \in \mathbb H^1} \E\Big[\Big(\frac\beta {pL^\gamma} \int |\ph(1) + V(1)|^p\Big) \1_{\{M(\ph(1)+V(1)) \le \NN L\}}\1_{\{M(\ph(1)+V(1)) \not\in S_\delta\}}\notag \\
&\phantom{\sup_{V \in \mathbb H^1_a} \E\Big[]}- \frac 12 \int_0^1 \int |\partial_x \dot V(t)|^2 + \alpha |\dot V|^2 dt\Big] \notag \\
\le &~\E\Big[\sup_{V \in \mathbb H^1}\Big(\frac\beta {pL^\gamma} \int |\ph(1) + V|^p\Big) \1_{\{M(\ph(1)+V) \le \NN L\}}\1_{\{M(\ph(1)+V) \not\in S_\delta\}}\notag \\
&\phantom{\sup_{V \in \mathbb H^1_a} \E\Big[]}- \frac 12 \int |\partial_x V(t)|^2\Big] \label{F1} 
\end{align}
Let $\eps \ll 1$ to be determined later, and let $ \ph_\eps = e^{\eta \Delta} \phi, $ with $\eta$ being the biggest value such that
$$\norm{\ph-\ph_\eps}_{L^2} \le \eps \NN^\frac12 L^\frac12, \norm{\ph-\ph_\eps}_{L^q} \le 4^{-1} \delta L^{\frac12}\lambda^{\frac1q-\frac12}.$$
Notice that the second condition implies that, for every $x_0 \in [-L/2,L/2]$,
$$\norm{L^{-\frac12} \lambda^\frac12 (\ph - \ph_\eps)(\lambda (\cdot - x_0))}_{L^q} < 4^{-1} \dl. $$
Therefore, proceeding as in the proof of Proposition \ref{prop: ub}, we obtain (for $\eps$ small enough, depending on $\dl$)
\begin{align}
&\eqref{F1} \notag \\
\le&~ \tilde C_\eps \Big(L^{1-\gamma}  + L \E\Big[ \eta^{-2} \Big]^\frac12 \Big) \label{F2}\\
&~- \inf_{W \in H^1([-L/2,L/2]), W \not \in S_{\delta/2}, \atop \norm{W}_{L^2}^2 \le \NN(1+\eps)L, \int W =0} -\frac{(1+\eps)\beta}{pL^\gamma} \int |W|^p + \frac{(1-\eps)}2 \int |\partial_x W|^2. \label{F3}
\end{align}
We have that 
$$
\begin{aligned}
\P(\eta^{-1} > M) \le&\ \P\left(\norm{e^{\frac 1 M \Delta} \ph - \ph}_{L^2}^2 > \eps^2 \NN L\right) \\
&\ + \P\left(\norm{e^{\frac 1 M \Delta} \ph - \ph}_{L^p}^2 > 4^{-1}\delta L^{\frac12}\lambda^{\frac1q-\frac12} \right) .
\end{aligned}
$$
Moreover, $e^{\frac 1 M \Delta} \ph - \ph$ is Gaussian, and 
$$\E\big|e^{\frac 1 M \Delta} \ph - \ph\big|^2 \les M^{- \frac 12},$$
so 
$$ \P\left(\norm{e^{\frac 1 M \Delta} \ph - \ph}_{L^2}^2 > \eps^2 \NN L\right) \le \exp\Big(-c \eps^2 \NN M\Big), $$
and 
$$ \P\left(\norm{e^{\frac 1 M \Delta} \ph - \ph}_{L^q} > 4^{-1}\delta L^{\frac12}\lambda^{\frac1q-\frac12}  \right) \le \exp\Big(-c \dl^2 L^{1- \frac 2q} \lambda^{\frac 2q - 1} M\Big).$$
Therefore, we obtain that $\E[\eta^{-2}] \les_{\dl,\eps,\NN} \max(1, (L^{-1}\lambda)^{1 - \frac 2q})$ and so 
\begin{equation}\label{F4}
\eqref{F2} \les_{\eps} L\max(1, (L^{-1}\lambda)^{1 - \frac 2q}).
\end{equation}
For convenience of notation, let 
$$ \tilde \eps:= (1-\eps^2) \Big( \frac{1-\eps}{(1+\eps)^\frac p2} \Big)^{-\frac{4}{6-p}} - 1. $$
Making the change of variable $W(x) = L^\frac12(1+\eps)^\frac12 \mu^{-\frac12} U(\frac \cdot \mu)$, 
with 
$$ \mu = \Big( \frac{1-\eps}{(1+\eps)^\frac p2} \Big)^\frac{2}{6-p}\lambda,$$ 
by \eqref{stabilityT}, we obtain that 
\begin{align*}
&\eqref{F3}\\
\le&~-{L^{\frac{p+2-4\gamma}{6-p}}}(1+\tilde \eps) \inf_{U \in H^1([-\frac{L}{2\mu}, \frac{L}{2\mu}]), \norm{U}_{L^2}^2 \le \NN, \atop \sup_{x,\ta} \norm{U- e^{i\ta}Q_p(\cdot - x)}_{L^2 \cap L^p} >  \delta/2, \int U =0} -\frac{\beta}{p} \int |U|^p + \frac{1}2 \int |\partial_x U|^2 \\
\le&~-{L^{\frac{p+2-4\gamma}{6-p}}}(1+\tilde \eps) A(\beta, \NN) + \eps_0(\dl/4),
\end{align*}
where $\eps_0$ is defined in \eqref{stability}. 
Therefore, by putting these estimates into \eqref{F2}, \eqref{F3}, we obtain
\begin{align*}
&~\eqref{F1} \\
\le &~  C_{\eps} L\max(1, (L^{-1}\lambda)^{1 - \frac 2q}) - {L^{\frac{p+2-4\gamma}{6-p}}}(1+ \tilde \eps)(A(\beta,\NN) +\eps_0(\delta/4)).
\end{align*}
Therefore, by choosing $\eps$ small enough, for $L$ big enough, we obtain \eqref{minimum3}.
\end{proof}
\begin{proof}[Proof of Theorem \ref{thm:2} and \ref{thm:2b}]
The asymptotics \eqref{thm2: asymptotic} and \eqref{thm2b: asymptotic} follow directly from Proposition \ref{prop: lb} and  \ref{prop: ub}. Therefore, we just need to prove \eqref{concentration} and \eqref{concentration2b}.  

We first focus on \eqref{concentration}. By Lemma \ref{lemma: ub2} and \eqref{thm2: asymptotic}, we have that there exists $a=a(\dl)>0$ such that 
\begin{align*}
&~\limsup_{L\to\infty} \frac{ \log  \E\left[\exp\big(\frac\beta {pL^\gamma} \int |\ph|^p\big) \1_{\{M(\ph) \le \NN L\}} \1_{\{\ph \not \in S_\delta \}}\right] }{L^{\frac{p+2-4\gamma}{6-p}}}\\
 =&~  -A(\beta,\NN) - a\\
 =&~ \lim_{L\to \infty} \frac{\log(Z_L)}{L^{\frac{p+2-4\gamma}{6-p}}} - a.
\end{align*}
Therefore,
\begin{align*}
&~\limsup_{L\to\infty} \log\bigg(\frac{   \E\Big[\exp\big(\frac\beta {pL^\gamma} \int |\ph|^p\big) \1_{\{M(\ph) \le \NN L\}} \1_{\{\ph \not \in S_\delta \}}\Big] }{Z_L}\bigg) \\
= &~\lim_{L\to \infty}{L^{\frac{p+2-4\gamma}{6-p}}}\bigg( \limsup_{L \to \infty} \frac{ \log  \E\Big[\exp\big(\frac\beta {pL^\gamma} \int |\ph|^p\big) \1_{\{M(\ph) \le \NN L\}} \1_{\{\ph \not \in S_\delta \}}\Big] }{L^{\frac{p+2-4\gamma}{6-p}}} \\
&\phantom{~\lim_{L\to \infty}{L^{\frac{p+2-4\gamma}{6-p}}}\bigg()}
- \lim_{L\to \infty} \frac{\log(Z_L)}{L^{\frac{p+2-4\gamma}{6-p}}}\bigg)\\
= &~ \lim_{L\to \infty} -a{L^{\frac{p+2-4\gamma}{6-p}}} = -\infty,
\end{align*}
from which we obtain \eqref{concentration}.

We now show \eqref{concentration2b}. By Lemma \ref{lemma: ub2} and \eqref{thm2b: asymptotic}, we have that as $\beta \to \infty$, 
\begin{align*}
&~\limsup_{L\to\infty}\log\Bigg( \frac{   \E\left[\exp\big(\frac\beta {pL^{\frac p2 -1}} \int |\ph|^p\big) \1_{\{M(\ph) \le \NN L\}} \1_{\{\ph \not \in S_\delta \}}\right] }{Z_L}\Bigg) \\
\le &\ \limsup_{L\to\infty} L \Bigg( \limsup_{L\to\infty} \Bigg( \frac{  \log \E\left[\exp\big(\frac\beta {pL^{\frac p2 -1}} \int |\ph|^p\big) \1_{\{M(\ph) \le \NN L\}} \1_{\{\ph \not \in S_\delta \}}\right] }{L}\Bigg) \\
&\phantom{\ \limsup_{L\to\infty} L \Bigg()}
- \liminf_{L\to\infty} \frac{\log Z_L}{L}\Bigg) \\
= &\ \limsup_{L\to\infty} L (1-\eps_1)(-A(\beta,\NN)) - L (1+o(1))(-A(\beta,\NN)) \\
= &\ \limsup_{L\to\infty} L (\eps_1 + o(1)) A(\beta,\NN)\\
= &\ -\infty
\end{align*}
when $\beta$ is big enough.
\end{proof}

\begin{proof}[Proof of Corollary \ref{triviality}]
In view of \eqref{concentration} and \eqref{concentration2b}, it is enough to show that for every $\eta > 0$,
\begin{gather*}
\limsup_{L \to \infty} \frac 1 {Z_L} \E\left[\exp\big(\frac\beta {pL^\gamma} \int |\ph|^p\big) \1_{\{M(\ph) \le \NN L\}} \1_{\{\int_{-M}^{M} |\ph| > \eps \}} \1_{\{\ph \in S_\delta \}}\right] \le \eta
\end{gather*}
when $\dl$ is small enough (depending on $M,\eps, \eta$).
By translation invariance, for every $-L/2 < y < L/2$, we have that  
\begin{align*}
&~\frac 1 {Z_L} \E\left[\exp\big(\frac\beta {pL^\gamma} \int |\ph|^p\big) \1_{\{M(\ph) \le \NN L\}} \1_{\{\int_{-M}^{M} |\ph| > \eps \}} \1_{\{\ph \in S_\delta \}}\right]\\
&= \frac 1 {Z_L} \E\left[\exp\big(\frac\beta {pL^\gamma} \int |\ph|^p\big) \1_{\{M(\ph) \le \NN L\}} \1_{\{\int_{-M+y}^{M+y} |\ph| > \eps \}} \1_{\{\ph \in S_\delta \}}\right] ,
\end{align*}
where $-M+y, M+y$ are to be interpreted modulo $L$. Therefore,
\begin{align*}
&~\frac 1 {Z_L} \E\left[\exp\big(\frac\beta {pL^\gamma} \int |\ph|^p\big) \1_{\{M(\ph) \le \NN L\}} \1_{\{\int_{-M}^{M} |\ph| > \eps \}} \1_{\{\ph \in S_\delta \}}\right]\\
&=\frac1L\int_{-L/2}^{L/2} \frac1 {Z_L} \E\left[\exp\big(\frac\beta {pL^\gamma} \int |\ph|^p\big) \1_{\{M(\ph) \le \NN L\}} \1_{\{\int_{-M+y}^{M+y} |\ph| > \eps \}} \1_{\{\ph \in S_\delta \}}\right]  d y \\
&=\frac 1 {Z_L} \E\bigg[\exp\big(\frac\beta {pL^\gamma} \int |\ph|^p\big) \1_{\{M(\ph) \le \NN L\}} \1_{\{\ph \in S_\delta \}}\cdot \frac{|\{y: \int_{-M+y}^{M+y} |\ph|> \eps\}|}{L}\bigg] \stepcounter{equation}\tag{\theequation} \label{t0}.
\end{align*}
With $\lambda$ as in the definition of $S_\delta$, we have that 
\begin{align}
&~\int_{-M+y}^{M+y} |\ph(x)| dx \notag\\
&= \int_{\lambda^{-1}(-M+y+x_0)}^{\lambda^{-1}(M+y+x_0)} |\ph|(\lambda(x-x_0)) d x \notag\\
&\le \int_{\lambda^{-1}(-M+y+x_0)}^{\lambda^{-1}(M+y+x_0)} |\ph(\lambda(x-x_0)) - L^\frac12\lambda^{-\frac12} Q_p(x)| d x \label{t1}\\
&\phantom{\le\ }+ \int_{\lambda^{-1}(-M+y+x_0)}^{\lambda^{-1}(M+y+x_0)} L^\frac12\lambda^{-\frac12} |Q_p(x)| d x \label{t2}.
\end{align}
Therefore, we have that 
\begin{equation} \label{t1+t2}
\Big|\Big\{y: \int_{-M+y}^{M+y} |\ph|> \eps\Big\}\Big| \le \{ y : \eqref{t1} \ge \frac\eps2\} +  \{ y : \eqref{t2} \ge \frac\eps2\}.
\end{equation}
By H\"older's inequality and definition of $S_\delta$, we have 
\begin{align*}
&~\int \eqref{t1}(y) dy \\
&\le (2\lambda^{-1} ML)^\frac12 \Big( \int_{-L/2}^{L/2} \int_{\lambda^{-1}(-M+y+x_0)}^{\lambda^{-1}(M+y+x_0)} |\ph(\lambda(x-x_0)) - L^\frac12\lambda^{-\frac12} Q_p(x)|^2 d x dy\Big)^\frac12\\
&= (2\lambda^{-1} ML)^\frac12 \Big(2M\int_{-\lambda^{-1}L/2}^{\lambda^{-1}L/2} |\ph(\lambda(x-x_0)) - L^\frac12\lambda^{-\frac12} Q_p(x)|^2 d x \Big)^\frac12\\
&\le 2M \lambda^{-1} L \delta. 
\end{align*}
Therefore, 
\begin{equation*}
\eps \{ y : \eqref{t1} \ge \eps\} \le \int \eqref{t1}(y) \1_{\{ y : \eqref{t1} \ge \eps\}}dy \le 2M \lambda^{-1} L \delta,
\end{equation*}
so 
\begin{equation} \label{t1est}
 \{ y : \eqref{t1} \ge \eps\} \le 2M \lambda^{-1} L \delta \eps^{-1}.
\end{equation}
We move to estimating the term  $\{ y : \eqref{t2} \ge \eps\}$. It is easy to check that 
$$ \eqref{t2} \les L^\frac12 \lambda^{-\frac32} M \|Q_p\|_{L^\infty}, $$
so in order to have $\{ y : \eqref{t2} \ge \eps\} \neq \emptyset$ as $L \to \infty$, we need $L \gtrsim \lambda^3$. 
Recalling the exponential decay of $Q_p$,\footnote{See for instance \cite[Theorem A.1]{frank}, that provides an explicit formula for $Q_p$.}, we have that, for $L$ big enough (depending on $M,\eps$), for any $\eps_0 > 0$,
\begin{equation} \label{t2est}
 |\{ y : \eqref{t2} \ge \eps\}| \les \lambda L^{\eps_0} \1_{\{L \gtrsim \lambda^3\}}.
\end{equation}
Therefore, by \eqref{t1+t2}, \eqref{t1est}, \eqref{t2est}, we obtain 
\begin{equation*}
\Big|\Big\{y: \int_{-M+y}^{M+y} |\ph|> \eps\Big\}\Big| \les M \lambda^{-1} L \delta \eps^{-1} +  \lambda L^{\eps_0} \1_{\{L \gtrsim \lambda^3\}}.
\end{equation*}
Therefore, by \eqref{t0},
\begin{align*}
&~\frac 1 {Z_L} \E\left[\exp\big(\frac\beta {pL^\gamma} \int |\ph|^p\big) \1_{\{M(\ph) \le \NN L\}} \1_{\{\int_{-M}^{M} |\ph| > \eps \}} \1_{\{\ph \in S_\delta \}}\right]\\
&\les \frac 1 {Z_L} \E\bigg[\exp\big(\frac\beta {pL^\gamma} \int |\ph|^p\big) \1_{\{M(\ph) \le \NN L\}} \1_{\{\ph \in S_\delta \}} \bigg]\cdot \frac{M \lambda^{-1} L \delta \eps^{-1} +  \lambda L^{\eps_0}\1_{\{L \gtrsim \lambda^3\}}}{L}\\
&\ll \eta
\end{align*}
as $L \to \infty$, when $\delta$ is small enough (depending on $M,\eps,\eta$).
\end{proof}
\section{Subcritical case and weakly nonlinear critical case}
\label{sec:GaussianLimit}

This section contains the proof of Theorem \ref{thm:3}, namely the convergence of $\rho_L$ to the Gaussian measure $\mu_{OU}$ if the non-linearity is sufficiently dampened.
One cannot hope to show that the that density converges in $L^1$ in the full regime of $\gamma,p$, because for $\gamma \leq 1$, 
\[
\E  \frac{\beta}{pL^\gamma} \int_{-L/2}^{L/2} |\ph|^p  \sim L^{1 - \gamma} \gtrsim 1.
\]
 The following Lemma shows that in the weak coupling regime, one can recover the $L^1$ convergence if the density is restricted to interval of size $L^\theta \ll L$ for $\theta$ small enough.

\begin{lemma}\label{lemma:small_scale1}
Let $\gamma \ge \frac p2-1$, let $0 < \theta < \gamma$ with $\theta \le 1$, and let $\NN > \frac{1}{2\sqrt{\alpha}}$. Then we have that 
\begin{enumerate}
\item If $\gamma = \frac p2 -1$, there exists $\beta_0 = \beta_0(p,\NN,\alpha)$ such that for every $\beta < \beta_0$,
\begin{equation} \label{eqn:small_scale}
Z_\theta := \E\left[ \exp \Big( \frac{\beta}{pL^\gamma} \int_{-L^\theta/2}^{L^\theta/2} |\ph|^p \Big) \1_{ \{ M(\ph) \leq \NN L  \}} \right] \to 1 \text{ as } L\to\infty.
\end{equation}
\item If $\gamma > \frac p2-1$, then \eqref{eqn:small_scale} holds for every choice of $\beta > 0$.
\end{enumerate}
\end{lemma}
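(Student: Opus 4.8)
The strategy is to prove the two one-sided bounds $\liminf_{L\to\infty}Z_\theta\ge1$ and $\limsup_{L\to\infty}Z_\theta\le1$ separately. The lower bound is immediate: the exponent in the definition of $Z_\theta$ is nonnegative, so $Z_\theta\ge\E[\1_{\{M(\ph)\le\NN L\}}]=\P(M(\ph)\le\NN L)$, and since $\NN>\frac1{2\sqrt\alpha}$ the large deviation estimate \eqref{LDE} gives $\P(M(\ph)\le\NN L)\to1$. For the upper bound I would apply the Bou\'e--Dupuis formula \eqref{BD} to $F(\ph)=\1_{\{M(\ph)\le\NN L\}}\frac{\beta}{pL^\gamma}\int_{-L^\theta/2}^{L^\theta/2}|\ph|^p$; since $F\le\1_{\{M(\ph)\le\NN L\}}\frac{\beta}{pL^\gamma}\int_{L\T}|\ph|^p$ has finite exponential moment by Theorem~\ref{thm:1}\,(i), \eqref{BD} applies and gives
\[
\log Z_\theta=\sup_{V}\ \E\Big[\1_{A_V}\,\tfrac{\beta}{pL^\gamma}\int_{-L^\theta/2}^{L^\theta/2}|\ph(1)+V(1)|^p-\tfrac12\int_0^1\!\big(\|\partial_x\dot V\|_{L^2}^2+\alpha\|\dot V\|_{L^2}^2\big)ds\Big],
\]
with $A_V=\{M(\ph(1)+V(1))\le\NN L\}$. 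Using $|\ph(1)+V(1)|^p\le2|V(1)|^p+C_p|\ph(1)|^p$ pointwise, the Jensen inequality $\frac12\int_0^1(\|\partial_x\dot V\|_{L^2}^2+\alpha\|\dot V\|_{L^2}^2)ds\ge\frac12\|\partial_xV(1)\|_{L^2}^2+\frac\alpha2\|V(1)\|_{L^2}^2$, and the bound $\frac{C_p\beta}{pL^\gamma}\E\int_{-L^\theta/2}^{L^\theta/2}|\ph(1)|^p\les_{p,\alpha,\beta}L^{\theta-\gamma}=o(1)$ (translation invariance and $\E|\ph(1)(0)|^p=O(1)$ by \eqref{random_fourier}, using $\theta<\gamma$), the upper bound reduces to showing
\[
\sup_V\E\Big[\1_{A_V}\Big(\tfrac{2\beta}{pL^\gamma}\int_{-L^\theta/2}^{L^\theta/2}|V(1)|^p-\tfrac12\|\partial_xV(1)\|_{L^2}^2-\tfrac\alpha2\|V(1)\|_{L^2}^2\Big)\Big]\le o(1).
\]

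The central ingredient is a deterministic Gagliardo--Nirenberg estimate: with $C_0:=2\NN+\tfrac{2}{\sqrt\alpha}$ there is, in the critical case $\gamma=\frac p2-1$, a threshold $\beta_0=\beta_0(p,\NN,\alpha)>0$ such that for all $L$ large and all $v\in H^1(L\T)$ with $\|v\|_{L^2(L\T)}^2\le C_0L$,
\[
\tfrac{2\beta}{pL^\gamma}\int_{-L^\theta/2}^{L^\theta/2}|v|^p\ \le\ \tfrac12\|\partial_xv\|_{L^2(L\T)}^2+\tfrac\alpha2\|v\|_{L^2(L\T)}^2,
\]
this being valid for every $\beta<\beta_0$ when $\gamma=\frac p2-1$ and for every $\beta>0$ when $\gamma>\frac p2-1$. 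To prove it, since $\theta\le1$ the interval $[-L^\theta/2,L^\theta/2]$ is contained in $L\T$, so by \eqref{GNStorus2},
\[
\int_{-L^\theta/2}^{L^\theta/2}|v|^p\le\int_{L\T}|v|^p\les_p L^{1-\frac p2}m^{\frac p2}+m^{\frac{p+2}{4}}K^{\frac{p-2}{4}},\qquad m:=\|v\|_{L^2(L\T)}^2,\ \ K:=\|\partial_xv\|_{L^2(L\T)}^2,
\]
and Young's inequality bounds $\frac{2\beta}{pL^\gamma}m^{\frac{p+2}{4}}K^{\frac{p-2}{4}}$ by $\frac14K+C_p'\,\beta^{\frac{4}{6-p}}L^{-\frac{4\gamma}{6-p}}m^{\frac{p+2}{6-p}}$. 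Using $m\le C_0L$ one rewrites each error contribution as $(\,\text{const}\cdot L^{\kappa})\,m$: for the first term $\kappa=-\gamma<0$, so it is $\le\frac\alpha4m$ once $L$ is large, for any $\beta$; for the second term $\kappa=\frac{2(p-2)-4\gamma}{6-p}$, which \emph{vanishes exactly} at $\gamma=\frac p2-1$ — there the surviving factor is $\propto\beta^{4/(6-p)}$, which fixes the choice of $\beta_0$ — and is strictly negative for $\gamma>\frac p2-1$, where $L$ large suffices for any $\beta$. Summing gives the displayed inequality; note that $C_0$ and $\beta_0$ depend only on $p,\NN,\alpha$, not on $\theta$.

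To conclude, observe that on $A_V$ one has $\|V(1)\|_{L^2(L\T)}^2\le2\NN L+2\|\ph(1)\|_{L^2(L\T)}^2$, so the bracket in the reduced supremum above is at most
\[
G(\ph(1)):=\sup_{\|v\|_{L^2(L\T)}^2\le2\NN L+2\|\ph(1)\|_{L^2(L\T)}^2}\Big(\tfrac{2\beta}{pL^\gamma}\int_{-L^\theta/2}^{L^\theta/2}|v|^p-\tfrac12\|\partial_xv\|_{L^2(L\T)}^2-\tfrac\alpha2\|v\|_{L^2(L\T)}^2\Big),
\]
a nonnegative quantity (take $v=0$) that depends only on $\ph(1)$; hence $\sup_V\E[\1_{A_V}(\,\cdot\,)]\le\E[G(\ph(1))]$. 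By the deterministic estimate, if $\|\ph(1)\|_{L^2(L\T)}^2\le\frac{L}{\sqrt\alpha}$ then $2\NN L+2\|\ph(1)\|_{L^2}^2\le C_0L$ and $G(\ph(1))=0$ for $L$ large; otherwise the same GNS/Young computation bounds $G(\ph(1))$ by a fixed polynomial in $L+\|\ph(1)\|_{L^2(L\T)}^2$ times a negative power of $L$. Since $\P(\|\ph(1)\|_{L^2(L\T)}^2>\frac{L}{\sqrt\alpha})\le e^{-cL}$ by \eqref{LDE} and $\E[(L+\|\ph(1)\|_{L^2(L\T)}^2)^q]\les_q L^q$ for every $q$ (Gaussian hypercontractivity), Cauchy--Schwarz yields $\E[G(\ph(1))]\le e^{-cL/2}L^{O(1)}=o(1)$. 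Therefore $\limsup_{L\to\infty}\log Z_\theta\le0$, which together with the lower bound proves \eqref{eqn:small_scale}.

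The crux, and the only delicate point, is the deterministic estimate: the integration domain has length only $L^\theta$, but the a priori mass bound is $\|v\|_{L^2(L\T)}^2\les L\gg L^\theta$, so even the sharp interpolation inequality does not close by itself; the resolution is to spend \emph{both} the $\dot H^1$ and the $L^2$ components of the Cameron--Martin cost on the $L^p$-term, and the two contributions balance precisely at the critical scaling $\gamma=\frac p2-1$, where what remains is proportional to a power of $\beta$ and can be absorbed only for $\beta$ small (while the surplus power of $L$ for $\gamma>\frac p2-1$ makes it free). The probabilistic removal of the exponentially unlikely event $\{\|\ph(1)\|_{L^2}^2\gg L\}$ is a necessary bookkeeping step, since without the a priori bound $m\les L$ the supremum over drifts would be infinite.
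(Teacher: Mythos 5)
Your proof is correct, and its skeleton (reduce to $\limsup\log\E[\exp(\1_{\{M\le\NN L\}}\tfrac{\beta}{pL^\gamma}\int_{-L^\theta/2}^{L^\theta/2}|\ph|^p)]\le0$, apply Bou\'e--Dupuis, then close with Gagliardo--Nirenberg--Sobolev and Young) is the same as the paper's; the dichotomy in the critical exponent also emerges in exactly the same place, namely the surviving $\beta^{4/(6-p)}$ prefactor when $\gamma=\tfrac p2-1$ versus the residual negative power of $L$ when $\gamma>\tfrac p2-1$. Where you diverge from the paper is in how the drift is decoupled from the Gaussian field. The paper performs a change of variable $V=-\ph_\eps+W$, with $\ph_\eps$ a mollification of $\ph$ at a scale tuned so that $\|\ph-\ph_\eps\|_{L^2}^2\le\tfrac{1+\eps}{4\alpha}L$ (and $\ph_\eps=0$ when $M(\ph)$ is typical), which converts the mass cutoff into a deterministic bound $\|W\|_{L^2}^2\le\sigma L$; the price is an expected Cameron--Martin cost $\E[\|\ph_\eps\|_{H^1}^2]$ that must be shown to be $o(1)$ (the paper's \eqref{A2}). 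You instead keep $V$ as is, expand $|\ph+V|^p\les|V|^p+|\ph|^p$ pointwise, absorb the $|\ph|^p$ contribution directly via $\E\int_{-L^\theta/2}^{L^\theta/2}|\ph|^p\les L^\theta$, and handle the resulting $\ph$-dependent mass bound $\|V(1)\|_{L^2}^2\le2\NN L+2\|\ph(1)\|_{L^2}^2$ by splitting on $\{\|\ph(1)\|_{L^2}^2\le L/\sqrt\alpha\}$, using \eqref{LDE} plus moment bounds to dispose of the complement. This is more elementary --- it avoids the mollification machinery entirely --- at the harmless cost of a slightly worse implicit threshold $\beta_0$ (your GNS step carries a prefactor coming from the crude ``$2|V|^p$'' and from $C_0=2\NN+2/\sqrt\alpha$, whereas the paper's refined splitting gives constants $(1+O(\eps))$); since the lemma does not track $\beta_0$, this makes no difference. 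Two small quibbles, neither of which affects the argument: the convexity constant in ``$2|V|^p+C_p|\ph|^p$'' should be $2^{p-1}$ (or $(1+\dl)$ for the sharp form), and \eqref{LDE} with $M\sim L$ and $|I|=L$ gives $e^{-cL^{1/2}}$ rather than $e^{-cL}$ for the bad event, which is still more than enough to beat the polynomial-in-$L$ bound on $\E[G(\ph(1))^2]^{1/2}$.
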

\begin{proof}
Since $\NN > \frac 1{2\sqrt \alpha}$, we have that $\P(\{ M(\ph) > \NN L  \}) \to 0$ as $L \to \infty$. Therefore, by the elementary identity 
$$ \E[e^F \1_{E}] = \E[e^{F \1_{E}}] - \P(E),   $$ 
it is enough to prove that 
$$ \E\left[ \exp \Big(  \1_{ \{ M(\ph) \leq \NN L  \}}\frac{\beta}{pL^\gamma} \int_{-L^\theta/2}^{L^\theta/2} |\ph|^p \Big) \right] \to 1 $$
as $L \to \infty$. Since the expression inside the exponential is always nonnegative, this expectation is always $\ge 1$. Therefore, it is enough to show that 
$$ \limsup_{L\to \infty} \E\left[ \exp \Big(  \1_{ \{ M(\ph) \leq \NN L  \}}\frac{\beta}{pL^\gamma} \int_{-L^\theta/2}^{L^\theta/2} |\ph|^p \Big) \right] \le 1,$$
or equivalently,
$$ \limsup_{L\to \infty} \log \E\left[ \exp \Big(  \1_{ \{ M(\ph) \leq \NN L  \}}\frac{\beta}{pL^\gamma} \int_{-L^\theta/2}^{L^\theta/2} |\ph|^p \Big) \right] \le 0.$$
By the Bou\'e-Dupuis formula \eqref{BD}, together with Proposition \ref{prop:1a}, we have that 
\begin{align*}
&\log \E\bigg[ \exp \Big(  \1_{ \{ M(\ph) \leq \NN L  \}}\frac{\beta}{pL^\gamma} \int_{-L^\theta/2}^{L^\theta/2} |\ph|^p \Big) \bigg] \\
\le& \sup_{V \in \mathbb H^1} \E\bigg[\1_{ \{ M(\ph(1)+V(1)) \leq \NN L  \}}\frac{\beta}{pL^\gamma} \int_{-L^\theta/2}^{L^\theta/2} |\ph(1) + V(1)|^p \\
& \phantom{\sup_{V \in \mathbb H^1_a} \E\bigg[]}
- \frac 12 \int_0^1 \norm{\dot V(s)}_{\dot H^1}^2 ds - \frac \alpha 2 \int_0^1 \norm{\dot V(s)}_{L^2}^2 ds\bigg] \\ 
= &\ \E\bigg[ \sup_{V \in H^1} \1_{ \{ M(\ph(1)+V) \leq \NN L  \}}\Big(\frac{\beta}{pL^\gamma} \int_{-L^\theta/2}^{L^\theta/2} |\ph(1) + V|^p - \frac 12  \norm{V}_{\dot H^1}^2 - \frac \alpha 2 \norm{ V}_{L^2}^2\Big)\bigg]
\end{align*}

Let $\eps \ll 1$ to be determined later, and let $\ph_\eps$ be defined as 
\begin{equation} \label{phi_eps}
\ph_\eps:= 
\begin{cases}
0 & \text{ if } M(\ph) \leq \frac{1+\eps}{4\alpha} L,\\
\ph \ast \rho_{\min(C\eps^4 L^\frac12\norm{\ph}_{H^\frac14}^{-4}, 1)} & \text{ otherwise },
\end{cases}
\end{equation}
where $\rho$ is a smooth, compactly supported, mollifier, $\rho_\dl(x) = \dl^{-1} \rho(\dl^{-1} x)$, and $C$ is a small enough constant. We make the change of variable $V = - \ph_\eps + W$. From the inequality 
$\norm{\ph - \ph \ast \rho_\delta}_{L^2} \les \delta^{\frac 14} \norm{\ph}_{H^\frac14}$, 
we get that 
\begin{align*}
\norm{\ph - \ph_\eps}_{L^2}^2 &\le
\begin{cases}
\norm{\ph}_{L^2}^2 \le \frac{1+\eps}{4\alpha} L & \text{ if } M(\ph) \leq \frac{1+\eps}{4\alpha}L, \\
(C\eps^4 L^\frac12\norm{\ph}_{H^\frac14}^{-4})^\frac14 \norm{\ph}_{H^\frac14} \le C \eps L^\frac18 & \text{ otherwise },
\end{cases}
\end{align*}
and so we always have that $\norm{\ph - \ph_\eps}_{L^2}^2 \le \frac{1+\eps}{4\alpha} L$ (for $L \ge 1$ and $\eps$ small enough).
Moreover, since $\norm{\ph - \ph_\eps + W}_{L^2}^2 \ge -C_\eps\norm{\ph - \ph_\eps}_{L^2}^2 + (1- \eps) \norm{W}_{L^2}^2$, by taking $\sigma \gg \NN$, we have that 
$$\{\norm{W}_{L^2}^2 \le L\sigma\} \supset \{\norm{\ph - \ph_\eps + W}_{L^2}^2 \le L \NN\}.$$ 
Therefore,
\begin{align*}
&\log \E\bigg[ \exp \Big(  \1_{ \{ M(\ph) \leq \NN L  \}}\frac{\beta}{pL^\gamma} \int_{-L^\theta/2}^{L^\theta/2} |\ph|^p \Big) \bigg] \\
\le &\ \E\bigg[ \sup_{V \in H^1}  \1_{ \{ M(\ph(1)+V) \leq \NN L  \}}\Big(\frac{\beta}{pL^\gamma} \int_{-L^\theta/2}^{L^\theta/2} |\ph(1) + V|^p \\
& \phantom{\ \E\bigg[ \sup_{V \in H^1}] }
- \frac 12  \norm{V}_{\dot H^1}^2 - \frac \alpha 2 \norm{ V}_{L^2}^2\Big)\bigg] \\
\le &\  \E\bigg[\sup_{W \in H^1} \1_{ \{ \norm{W}_{L^2}^2 \le L\sigma \}}\Big(\frac{\beta}{pL^\gamma} \int_{-L^\theta/2}^{L^\theta/2} |(\ph-\ph_\eps)(1) + W|^p \\
& \phantom{\ \E\bigg[ \sup_{V \in H^1}] }
- \frac 12  \norm{- \ph_\eps + W}_{\dot H^1}^2 - \frac \alpha 2 \norm{ - \ph_\eps + W}_{L^2}^2\Big)\bigg] \\
\le & \ \E\bigg[\sup_{W \in H^1} C_\eps \frac{\beta}{pL^\gamma} \int_{-L^\theta/2}^{L^\theta/2} |(\ph-\ph_\eps)(1)|^p + \1_{ \{ \norm{W}_{L^2}^2 \le L\sigma \}} \Big((1+\eps) \frac{\beta}{pL^\gamma} \int_{-L^\theta/2}^{L^\theta/2} |W|^p \bigg. \\
&\bigg. \phantom{\inf_{W \in H^1} }  - (1-\eps)\big(\frac 12  \norm{W}_{\dot H^1}^2 + \frac \alpha 2 \norm{ W}_{L^2}^2\big)\Big) + C_\eps \bigg(\frac 12  \norm{\ph_\eps}_{\dot H^1}^2 + \frac \alpha 2 \norm{ \ph_\eps}_{L^2}^2\bigg) \bigg]\\
= &\ \E\bigg[C_\eps \frac{\beta}{pL^\gamma} \int_{-L^\theta/2}^{L^\theta/2} |(\ph-\ph_\eps)(1)|^p + C_\eps \bigg(\frac 12  \norm{\ph_\eps}_{\dot H^1}^2 + \frac \alpha 2 \norm{ \ph_\eps}_{L^2}^2\bigg) \bigg] \\
& 
\begin{multlined}
-(1-\eps) \inf_{W\in H^1} \1_{ \{ \norm{W}_{L^2}^2 \le L\sigma \}} \Big( - \frac{(1+3\eps) \beta}{pL^\gamma} \int_{-L^\theta/2}^{L^\theta/2} |W|^p \\
\phantom{XXXXXXXXXXXXXXX} 
+ \big(\frac 12  \norm{W}_{\dot H^1}^2 + \frac \alpha 2 \norm{ W}_{L^2}^2\big)\Big). 
\end{multlined}
\numberthis \label{A1.1}
\end{align*}
Since for $\eps$ small enough, $\norm{\ph_\eps}_{L^p(-L^\theta/2, L^\theta/2)} \le \norm{\ph}_{L^p(-L^\theta/2 -1, L^\theta/2+1)}$ and $\norm{\ph \ast \rho_\delta}_{H^1} \les \delta^{-\frac34} \norm{\ph}_{H^\frac14}$, 
\begin{equation}
\begin{aligned}
&C_\eps\E\left[ \frac{\beta}{pL^\gamma} \int_{-L^\theta/2}^{L^\theta/2} |(\ph-\ph_\eps)(1)|^p +  \left(\frac 12  \norm{\ph_\eps}_{\dot H^1}^2 + \frac \alpha 2 \norm{ \ph_\eps}_{L^2}^2\right) \right] \\
\les &\ \E\left[\frac 1 {L^\gamma}  \int_{-L^\theta/2-1}^{L^\theta/2 +1 } |\ph|^p + \1_{\{\ph_\eps \neq 0\}} \max(\eps^{-6} L^{-\frac 34} \norm{\ph}_{H^\frac 14}^8,1)\right]\\
\les &\ L^{\theta - \gamma} + \P\left(\left\{\norm{\ph}_{L^2}^2 > \frac{1+\eps}{4\alpha} L\right\}\right) \E\left[(1 +\eps^{-24} L^{-3} \norm{\ph}_{H^\frac 14}^{32}) \right]\\
\les &\ L^{\theta - \gamma} + \exp(-c \eps L^\frac 12) \eps^{-24} L^{13} \to 0
\end{aligned}\label{A2}
\end{equation}
as $L \to \infty$. Here we used the large deviation estimate 
\begin{equation} \label{LDE2}
\P\left(\left\{\left(\int|\ph|^2 - \frac L {4 \alpha}\right) > M\right\}\right) \les \exp\Big(-c\frac{M}{L^\frac12}\Big),
\end{equation}
which follows from \eqref{LDE}, 
and the estimate $\E \norm{\ph}_{H^\frac 14}^{32} \les L^{16}$, which follows from \eqref{ph_Hsp}.

For the term \eqref{A1.1}, by the Gagliardo-Nirenberg-Sobolev inequality \eqref{GNStorus2}, the fact that $\| W \|_{L^2} \les_{\sigma} L^\frac 12$, and Young's inequality, for a constant $C_\eps = C_\eps(\sigma)$ that can change line to line, we obtain 
\begin{equation} \label{A3}
\begin{aligned}
& (1+3\eps) \frac{\beta}{pL^\gamma} \int_{-L^\theta/2}^{L^\theta/2} |W|^p\\
 \le &\ \frac{(1+4\eps)\beta \CGN^p}{pL^\gamma} \big(\norm{W}_{\dot H^1}^2\big)^{\frac {p-2}4} \big(\norm{W}_{L^2}^2\big)^{\frac {p+2}4} + C_\eps\beta L^{-\frac p2 + 1 - \gamma}\| W \|_{L^2}^{\frac{p+2}{2}} \\
\le &\ \frac{1-2\eps}2 \norm{W}_{\dot H^1}^2 + C_\eps \big(\frac{\beta}{L^\gamma}\big)^{\frac 4{6-p}} \big(\norm{W}_{L^2}^2\big)^{\frac {p+2}{6-p}} +  C_\eps \beta L^{-\frac p2 + 1 - \gamma}L^{\frac{p-2}{4}}\| W \|_{L^2}^{2} \\
\le&  \frac{1-2\eps}2 \norm{W}_{\dot H^1}^2 + C_\eps\Big[ \big(\frac{\beta}{L^\gamma}\big)^{\frac 4{6-p}}L^{\frac {p+2}{6-p}-1} 
+ \beta L^{-\frac{4\gamma + p -2}{4}}\Big]
\norm{W}_{L^2}^2.
\end{aligned}
\end{equation}
Therefore, 
\begin{equation} \label{A4}
\begin{aligned}
&\inf_{W\in H^1}  \1_{ \{ \norm{W}_{L^2}^2 \le L\sigma \}} \Big(-(1+\eps) \frac{(1+3\eps)\beta}{pL^\gamma} \int_{-L^\theta/2}^{L^\theta/2} |W|^p + \frac 12  \norm{W}_{\dot H^1}^2 + \frac \alpha 2 \norm{ W}_{L^2}^2\Big) \\
&\ge \eps \norm{W}_{\dot H_1}^2 + \delta(\eps) \norm{W}_{L^2}^2
\end{aligned}
\end{equation}
as long as $\eps$ is small enough, and
\begin{equation}\label{A5}
\delta(\eps) = \alpha - C_\eps\Big[ \big(\frac{\beta}{L^\gamma}\big)^{\frac 4{6-p}}L^{\frac {p+2}{6-p}-1} 
+ \beta L^{-\frac{4\gamma + p -2}{4}}\Big] > 0
\end{equation}
for $L$ big enough. It easy to check that this holds true for $\gamma = \frac p2 -1 $ and $\beta$ small enough, and for $\gamma < \frac p2 - 1$ and every $\beta$.
\end{proof}

The following Lemma (using essentially the same argument), gives an upper bound on the full partition function, which doesn't vanish as $L \to \infty $ when $\gamma \leq 1$.

\begin{lemma} \label{lemma: theta_removal}
Let $\gamma \ge \frac p2-1$, let $\NN > \frac 1 {2\sqrt\alpha}$, and when $\gamma = \frac p2-1$, let $\beta < \beta_0$, as defined in Lemma \ref{lemma:small_scale1}. 
\begin{equation}
\log{\E\Big[ \exp\Big( \frac \beta {p L^\gamma} \int |\ph|^p \Big) \1_{\{M(\ph) \le \NN L\}}\Big]} \les L^{1-\gamma}.
\end{equation}
\end{lemma}

\begin{proof}
By the the Bou\'e-Dupuis formula \eqref{BD}, we just need to prove that 
\begin{equation} \label{C1}
\begin{aligned}
&\sup_{V \in \mathbb H^1} \E\Big[ \1_{ \{ M(\ph(1)+V(1)) \leq \NN L  \}}\frac{\beta}{pL^\gamma} \int |\ph(1) + V(1)|^p  \\
&\phantom{\inf_{V \in \mathbb H^1_a} \E\Big[]}- \frac 12 \norm{V(1)}_{\dot H^1}^2  - \frac \alpha 2 \norm{ V(1)}_{L^2}^2 \Big] \les L^{1-\gamma}.
\end{aligned}
\end{equation}
Proceeding exactly as in the proof of Lemma \ref{lemma:small_scale1}, we decompose $V = -\ph_\eps + W$, and we obtain the analogous of \eqref{A1.1} with $\theta =1$.
Then we proceed as in \eqref{A2} and \eqref{A3}--\eqref{A5} with the only difference that in \eqref{A2} $L^{\theta-\gamma}$ becomes $L^{1-\gamma}$.


\end{proof}

The following three  Lemmas make precise  point (2) in the discussion on page \pageref{RandomLabel}, namely  
the statement that for $K > 0$, the random variables $\ph|_{[-K,K]}$ and $\exp\Big(\frac\beta {pL^\gamma} \int_{[-L^\ta/2,L^\ta/2]^c} |\ph|^p\Big)$ are ``almost independent"  as $L \to \infty$.

The following Lemma establishes that up to an error which is $o(1)$ when $L \to \infty$, the control which arises in the variational representation for expectation of $e^{F_\theta}$, defined below,  can be chosen to be supported away from $[-L^\theta/4, L^\theta/4]$.
\begin{lemma} \label{lemma: large_scale}
Let $\gamma \geq \frac p2-1$, and let $0<\theta<\gamma\wedge 1$. Call 
\begin{equation} \label{Ntheta}
M_\theta(u):=\int |u|(x)^2 \1_{[-L^\theta/2,L^\theta/2]^c}(x) dx 
\end{equation}
and 
\begin{equation*}
F_\theta(u) = \1_{ \{ M_\theta(u) \leq \NN L  \}}\frac{\beta}{pL^\gamma} \int \left(|u|^p \1_{[-L^\theta/2,L^\theta/2]^c} \right). 
\end{equation*}
We point out, that $F_\theta(u)$ only depends on the values of $u$ on the complement of the interval $[-L^\theta/2, L^\theta/2]$.

Then, as $L\to \infty$, we have the following equalities: 
\begin{align}
&\sup_{V \in \mathbb H^1_a} \E\Big[ F_\theta (\ph(1) + V(1) ) 
 -  \frac12 \int \norm{\dot V(s)}_{\dot H^1}^2  + \alpha \norm{\dot V(s)}_{L^2}^2 ds \Big]\label{eqn: scale1} \\
 \notag
=&\ \sup_{V \in \mathbb H^1_a}\E\Big[  F_\theta (\ph(1) + V(1) ) \\
&\phantom{ \sup_{V \in \mathbb H^1_a}}
   - \frac12 \iint \left(| \partial_x \dot V(s)|^2 + \alpha|\dot V(s)|^2\right)\1_{(-L^\theta/4,L^\theta/4)^c} ds dx\Big] + o(1) \label{eqn: scale2}\\
=&\sup_{V \in \mathbb H^1_a,\atop \supp(V) \subseteq(-L^\theta/4,L^\theta/4)^c}\E\Big[ F_\theta (\ph(1) + V(1) )  \notag \\
&\phantom{\inf_{V \in \mathbb H^1_a,\atop \supp(V) \subseteq(-L^\theta/2,L^\theta/2)^c} -\frac{\beta}{p} \int}- \frac12 \int \norm{\dot V(s)}_{\dot H^1}^2 + \alpha \norm{\dot V(s)}_{L^2}^2 ds\Big] + o(1) \label{eqn: scale3}
\end{align}
\end{lemma}

\begin{proof}
We clearly have that $\eqref{eqn: scale1}\le\eqref{eqn: scale2}$, because the expression in the sup is  bigger, and that $\eqref{eqn: scale3} \le \eqref{eqn: scale1}$, since the supremum is taken over a smaller set. Therefore, it is enough to prove that $\eqref{eqn: scale3} \ge \eqref{eqn: scale2}$.

Let $V \in \mathbb H^1_a$, and given $y_0 \ge 0$, define the operator $T_{y_0}$ as 
\begin{equation}
\begin{aligned}
&(T_{y_0} V)(x)\\
&:=\ \begin{cases}
V(x) & \text{ if } |x| \ge y_0 \\
\frac{V(y_0) - e^{-2\sqrt{\alpha} y_0}V(-y_0)}{1-e^{-4\sqrt{\alpha} y_0}} e^{-\sqrt{\alpha}(y_0-x)} + 
\frac{V(-y_0) - e^{-2\sqrt{\alpha} y_0}V(y_0)}{1-e^{-4\sqrt{\alpha} y_0}} e^{-\sqrt{\alpha}(y_0+x)}  & \text{ if } |x| \le y_0.
\end{cases}
\end{aligned}
\end{equation}
It is easy to see that $\partial_s(T_{y_0}V(s)) = T_{y_0}\dot V(s)$ is the only solution to the equation 
\begin{equation}
\begin{cases}
\partial_x^2 Y + \alpha Y = 0 & \text{ in the interval } [-y_0,y_0] \\
Y \equiv \dot V & \text{ on } [-y_0,y_0]^c\\
Y(-y_0) = \dot V(-y_0)\\
Y(y_0) = \dot V(y_0).
\end{cases}
\end{equation}
Therefore, by a simple variational argument, $T_{y_0}V$ is the function in $\mathbb H^1$ which is equal to $V$ outside of the interval $[-y_0,y_0]$ and minimises the expression 
$$\frac12 \int \norm{\dot V(s)}_{\dot H^1}^2  +\alpha \norm{\dot V(s)}_{L^2}^2 ds.$$
Let $\rho$ be a smooth even function with $\rho \equiv 1$ on the interval $[-1,1]$ and $\rho \equiv 0$ out of the interval $[-1-\frac1{100}, 1 +\frac1{100}]$. Let $\rho_M(x):= \rho(\frac x M)$. Given $V \in \mathbb H^1_a$, let 
$$W = W(V):= (1-\rho_{L^\theta/4})T_{\frac38L^\theta}V.$$
We have that $\supp(W) \subseteq (-L^\theta/4, L^\theta/4)^c$, $W \equiv V$ on $[-L^\theta/2, L^\theta/2]^c$, for a constant $C$ that can change line to line, 
\begin{equation}
\begin{aligned}
&
\E\Big[ F_\theta(\ph(1) + W(1) )) 
- \frac12 \int \norm{\dot W(s)}_{\dot H^1}^2 + \alpha \norm{\dot W(s)}_{L^2}^2 ds\Big]
\\
=&\ \E\Big[ F_\theta(\ph(1) + V(1) ) \\
& \hspace{8mm}- \frac12 \int \norm{(1-\rho_{L^\theta/4})T_{\frac38L^\theta}\dot V(s)}_{\dot H^1}^2 + \alpha \norm{(1-\rho_{L^\theta/4})T_{\frac38L^\theta}\dot V(s)}_{L^2}^2 ds\Big]\\
\ge &\ \E\Big[F_\theta(\ph(1) + V(1) ) \\
&\hspace{8mm}- \frac12 (1 + C e^{-\frac 19 \sqrt{\alpha} L^\theta})\left( \int \norm{T_{\frac38L^\theta}\dot V(s)}_{\dot H^1}^2 + \alpha \norm{T_{\frac38L^\theta}\dot V(s)}_{L^2}^2 ds\right)\Big]\\
\ge &\ \E\Big[ F_\theta(\ph(1) + V(1) ) \\
&\hspace{8mm}- \frac12 (1 + C e^{-\frac 19 \sqrt{2 \alpha} L^\theta})\left( \int \norm{ (\partial_x \dot V(s) )\1_{(-L^\theta/4,L^\theta/4)^c}}_{L^2}^2 + \alpha \norm{\dot V(s)\1_{(-L^\theta/4,L^\theta/4)^c}}_{L^2}^2 ds\right)\Big]\\
\end{aligned} \label{eqn: scale4}
\end{equation}
Let now $V_n$ be a maximising sequence for the expression in \eqref{eqn: scale2}, so that 
\begin{equation} \label{eqn: scale5}
\begin{aligned}
\eqref{eqn: scale2} =  \lim_n \E\Big[ & F_\theta(\ph(1) + V_n(1) ) \\
&- \frac12 \int \norm{\dot V_n(s)\1_{(-L^\theta/4,L^\theta/4)^c}}_{\dot H^1}^2  +\alpha \norm{\dot V_n(s)\1_{(-L^\theta/4,L^\theta/4)^c}}_{L^2}^2 ds \Big].
\end{aligned}
\end{equation}
Note that we can assume without loss of generality that $V_n = T_{L^\theta/2} V_n$.
Let $W_n = W(V_n)$. By definition of \eqref{eqn: scale3}, since $\supp W_n \subseteq (-L^\theta/4, L^\theta/4)^c$, we have that 
\begin{equation} \label{eqn: scale6}
\begin{aligned}
&~\eqref{eqn: scale3} \\
& \ge \liminf_n \E\Big[ F_\theta(\ph(1) + W_n(1) ) 
- \frac12 \int \norm{\dot W_n(s)}_{\dot H^1}^2 + \alpha \norm{\dot W_n(s)}_{L^2}^2 ds\Big]\\
& \ge \liminf_n \E\Big[ F_\theta(\ph(1) + V_n(1) ) \\
&\phantom{\inf_{W_n \in \mathbb H^1_a} -\frac{\beta}{p} \int}
- \frac12 (1 + C e^{-\frac 19 \sqrt{2 \alpha} L^\theta})\Big(\int \norm{\dot V_n(s)\1_{(-L^\theta/4,L^\theta/4)^c}}_{\dot H^1}^2  \\
&\phantom{\inf_{W_n \in \mathbb H^1_a} -\frac{\beta}{p} \int
+ \frac12 (1 + C e^{-\frac 14 \sqrt \alpha L^\theta})\Big()}+\alpha \norm{\dot V_n(s)\1_{(-L^\theta/4,L^\theta/4)^c}}_{L^2}^2 ds\Big) \Big]\\
& = \eqref{eqn: scale2} + \liminf_n \frac {C e^{-\frac 19 \sqrt{2 \alpha} L^\theta}}2\E\Big[ \int \norm{\dot V_n(s)\1_{(-L^\theta/4,L^\theta/4)^c}}_{\dot H^1}^2  +\alpha \norm{\dot V_n(s)\1_{(-L^\theta/4,L^\theta/4)^c}}_{L^2}^2 ds \Big].
\end{aligned}
\end{equation}
Since $V_n$ is a minimising sequence, by testing the expression \eqref{eqn: scale2} with $V = 0$, we obtain that 
\begin{align*}
 &\E\Big[ \int \norm{\dot V_n(s)\1_{(-L^\theta/4,L^\theta/4)^c}}_{\dot H^1}^2  +\alpha \norm{\dot V_n(s)\1_{(-L^\theta/4,L^\theta/4)^c}}_{L^2}^2 ds \Big] \\
 &\le2\E\Big[F_\theta(\ph(1) + V_n(1) ) \Big] + C L^{1- \gamma}\\
  &\le\E\Big[ 2 (1+\eps) F_\theta( V_n(1) )\Big] + C_\eps L^{1- \gamma}\\
  &\le \E\Big[-A\big(\frac{2\beta(1+\eps)}{L^\gamma}, (\NN^\frac12L^\frac12 + \|\ph\|_{L^2})^2) + \frac 12 \int \|T_{L^\theta/2} \dot V_n(s)\|_{\dot H^1}^2 ds  \Big]+ C_\eps L^{1- \gamma},
\end{align*}
where we recall that $A$ is the quantity defined in \eqref{min:eqn}. Therefore, recalling that by Lemma \ref{gns2}, $A\le 0$, and by \eqref{Ascaling}, and that by our assumption $V_n = T_{L^\theta/2} V_n$, we obtain that
\begin{align*}
 &\E\Big[ \int \norm{\dot V_n(s)\1_{(-L^\theta/4,L^\theta/4)^c}}_{\dot H^1}^2  +\alpha \norm{\dot V_n(s)\1_{(-L^\theta/4,L^\theta/4)^c}}_{L^2}^2 ds \Big] \\
 &\lesssim C_\eps L^{1-\gamma} + \E\Big[\Big(1 + \frac{\|\ph\|_{L^2}}{L^\frac12}\Big)^\frac{4 + 2p}{6-p}\Big] L^{\frac {p+2-4\gamma} {6-p} }\\
  &\lesssim L^{1-\gamma} + L^{\frac {p+2-4\gamma} {6-p} }. 
 \end{align*} 
 Therefore, from \eqref{eqn: scale6}, we obtain
 $$\eqref{eqn: scale3} \le \eqref{eqn: scale2} + o(1). $$
\end{proof}

The following Lemma complements the previous Lemma~\ref{lemma: large_scale}. This time we are  interested in an observable $F$ that only depends on the 
values of $\varphi$ in the compact interval $[-K,K]$. The Lemma states that up to an error which is $o(1)$ as $L \to \infty$ the control $V$ in the variational problem for the expectation  can be chosen to be compactly 
supported in $[-L^\theta/4, L^\theta/4]$.

\begin{lemma} \label{lemma: small_scale2}
Let 
 $F:\D'(K)\to \R$ be a bounded Borel functional, and let $0 < \theta < 1$. Then, as $L \to \infty$, the following holds.
\begin{align}
& \sup_{V \in \mathbb H^1_a} \E\Big[ F(\ph(1) + V(1)) - \frac12 \int \norm{\dot V(s)}_{\dot H^1}^2  + \alpha \norm{\dot V(s)}_{L^2}^2 ds\Big] \label{eqn: Fscale1} \\
=& \sup_{V \in \mathbb H^1_a} \E\Big[ F(\ph(1) + V(1)) - \frac12  \iint   \left( |\partial_x \dot V(s)|^2  +\alpha |\dot V(s)|^2\right) \1_{[-L^\theta/4, L^\theta/4]} ds\Big] +o(1)\label{eqn: Fscale2} \\
=& \sup_{V \in \mathbb H^1_a, \atop \supp(V) \subseteq [-L^\theta/4, L^\theta/4]} \E\Big[ F(\ph(1) + V(1)) -  \frac12 \int  \norm{\dot V(s)}_{\dot H^1}^2  +\alpha \norm{\dot V(s)}_{L^2}^2 ds\Big] +o(1) \label{eqn: Fscale3}
\end{align}
\end{lemma}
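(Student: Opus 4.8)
The plan is to reproduce, in a simplified form, the three–step comparison used in the proof of Lemma~\ref{lemma: large_scale}, now localising the drift to the \emph{small} window $[-L^\theta/4,L^\theta/4]$. Two of the three relations are automatic and hold with no error term: one has $\eqref{eqn: Fscale3}\le\eqref{eqn: Fscale1}$, since the supremum in \eqref{eqn: Fscale3} runs over the strictly smaller set of $V\in\mathbb H^1_a$ with $\supp(V)\subseteq[-L^\theta/4,L^\theta/4]$, for which the two cost functionals coincide; and $\eqref{eqn: Fscale1}\le\eqref{eqn: Fscale2}$, since the quantity subtracted in \eqref{eqn: Fscale2} is the restriction to $[-L^\theta/4,L^\theta/4]$ of $\frac12\int_0^1\norm{\dot V(s)}_{\dot H^1}^2+\alpha\norm{\dot V(s)}_{L^2}^2\,ds$, hence no larger. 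Thus everything reduces to showing that $\eqref{eqn: Fscale3}\ge\eqref{eqn: Fscale2}-o(1)$ as $L\to\infty$.

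The first ingredient — and the only place where boundedness of $F$ is used — is an a priori control on the cost of near–optimisers of \eqref{eqn: Fscale2}. Testing the supremum with $V\equiv0$ gives $\eqref{eqn: Fscale2}\ge\E[F(\ph(1))]\ge-\norm{F}_{L^\infty}$, while trivially $\eqref{eqn: Fscale2}\le\norm{F}_{L^\infty}$ since the subtracted term is nonnegative. Hence, for fixed $\eps>0$, any $V\in\mathbb H^1_a$ achieving the supremum in \eqref{eqn: Fscale2} up to $\eps$ satisfies
\begin{equation*}
C_V:=\E\Big[\frac12\int_0^1\int_{-L^\theta/4}^{L^\theta/4}\big(|\dot V'(s)|^2+\alpha|\dot V(s)|^2\big)\,dx\,ds\Big]\le 2\norm{F}_{L^\infty}+\eps=:C_0 ,
\end{equation*}
and in particular $\frac12\E\int_0^1\int_{-L^\theta/4}^{L^\theta/4}|\dot V(s)|^2\,dx\,ds\le\alpha^{-1}C_0$.

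Next I would localise such a $V$. Fix an even $\chi\in C^\infty_c(\R)$ with $0\le\chi\le1$, $\chi\equiv1$ on $[-1/2,1/2]$ and $\supp\chi\subseteq[-1,1]$, set $\chi_L(x):=\chi(4x/L^\theta)$, and put $W:=\chi_L V$. Then $W\in\mathbb H^1_a$, $\supp(W)\subseteq[-L^\theta/4,L^\theta/4]$, and since $\chi_L\equiv1$ on $[-L^\theta/8,L^\theta/8]\supseteq(-K,K)$ for $L$ large, one has $(\ph(1)+W(1))|_{(-K,K)}=(\ph(1)+V(1))|_{(-K,K)}$, so $F(\ph(1)+W(1))=F(\ph(1)+V(1))$. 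For the cost, writing $\dot W(s)=\chi_L\dot V(s)$, applying Young's inequality to $|\chi_L'\dot V(s)+\chi_L\dot V'(s)|^2$, and using $0\le\chi_L^2\le\1_{[-L^\theta/4,L^\theta/4]}$ together with $|\chi_L'|^2\les L^{-2\theta}$ on $L^\theta/8\le|x|\le L^\theta/4$, one obtains
\begin{equation*}
\E\Big[\frac12\int_0^1\norm{\dot W(s)}_{\dot H^1}^2+\alpha\norm{\dot W(s)}_{L^2}^2\,ds\Big]\le(1+\eps)\,C_V+C_\eps\,L^{-2\theta},
\end{equation*}
where the error term absorbs the contribution of $\chi_L'$ via the a priori $L^2$-bound above, with $C_\eps$ depending on $\eps,\alpha,\norm{F}_{L^\infty}$ but not on $L$. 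Feeding $W$ into \eqref{eqn: Fscale3} and using the near–optimality of $V$,
\begin{equation*}
\eqref{eqn: Fscale3}\ge\E\big[F(\ph(1)+V(1))-C_V\big]-\eps\,C_V-C_\eps L^{-2\theta}\ge\eqref{eqn: Fscale2}-\eps(1+C_0)-C_\eps L^{-2\theta}.
\end{equation*}
Letting $L\to\infty$ for fixed $\eps$, then $\eps\downarrow0$, gives $\eqref{eqn: Fscale3}\ge\eqref{eqn: Fscale2}-o(1)$; together with the two automatic inequalities this yields $\eqref{eqn: Fscale2}-o(1)\le\eqref{eqn: Fscale3}\le\eqref{eqn: Fscale1}\le\eqref{eqn: Fscale2}$, which is the assertion of the lemma.

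I expect the only delicate point to be the a priori bound $C_V\le C_0$: it is precisely the boundedness of $F$ that keeps the cost of near–optimisers $O(1)$, and this is exactly what renders the $O(L^{-2\theta})$ cutoff error negligible. Note that, unlike the corresponding step in Lemma~\ref{lemma: large_scale}, no harmonic–extension operator $T_{y_0}$ is needed here: since $K$ is fixed, no constraint forces $V$ to be preserved on a large–scale annulus, so a plain smooth cutoff at scale $L^\theta$ is enough and its gradient contributes only a polynomially small correction.
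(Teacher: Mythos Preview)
Your argument is correct and proves the lemma; moreover, it is genuinely simpler than the paper's. The paper first replaces a near–optimiser $V_n$ by its harmonic extension $T_K V_n$ (equal to $V_n$ on $[-K,K]$, satisfying $(\alpha-\partial_x^2)u=0$ outside), exploits the exponential decay of $T_K V_n$ away from $[-K,K]$, and only then multiplies by a cutoff $\rho$; the resulting error is exponentially small, $Ce^{-\sqrt{\alpha}(L^\theta/8-K)}\cdot 4\|F\|_{L^\infty}$. You bypass the harmonic extension entirely and multiply $V$ directly by the cutoff $\chi_L$: because the cost restricted to $[-L^\theta/4,L^\theta/4]$ is already $O(\|F\|_{L^\infty})$ for near–optimisers, the cross term $\chi_L'\dot V$ contributes only $O_\eps(L^{-2\theta})$, and the $(1+\eps)$ multiplicative loss from Young's inequality is disposed of by the double limit $L\to\infty$ then $\eps\downarrow 0$. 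What the paper's route buys is a cleaner one–shot exponential error (no $\eps$ needed), consistent with the exponential–decay philosophy used throughout Section~5; what your route buys is that one does not need to introduce or analyse $T_K$ at all, and the argument makes transparent that the only ingredient required is the $O(1)$ a priori bound on the restricted cost coming from boundedness of $F$.
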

\begin{proof}
Similarly to the proof of Lemma \ref{lemma: large_scale}, we have that $\eqref{eqn: Fscale2} \ge \eqref{eqn: Fscale1} \ge \eqref{eqn: Fscale3}$, 
so we just need to prove that $\eqref{eqn: Fscale3} \ge \eqref{eqn: Fscale2} + o(1)$. 
Let $h$ be the function
\begin{align*}
h_V(x) := & \frac{ V(-K) - e^{-\sqrt{\alpha} (L-2K)} V(K)}{1- e^{-2\sqrt{\alpha} (L-2K)}} e^{-\sqrt{\alpha} (L-K-x)} \\
&+  \frac{ V(K) - e^{-\sqrt{\alpha} (L-2K)} V(-K)}{1- e^{-2\sqrt{\alpha} (L-2K)}} e^{-\sqrt{\alpha} (x-K)},
\end{align*}
and define $T_KV$ as 
\begin{equation}
T_K V (x) :=
\begin{cases}
V(x) & \text{ if } |x| \le K,\\
h_V(x) & \text{ if } x \ge K, \\
h_V(x+L)  & \text{ if } x \ge -K.
\end{cases}
\end{equation}
It is easy to see that such $\partial_s (T_K V(s)) = T_K \dot V(s)$ is the only solution the differential equation 
\begin{equation}
\begin{cases}
\partial_x^2 Y(x) + \alpha Y(x) = 0 & \text{ if }|x|>K, \\
Y(-K) = \dot V(-K)\\
Y(K) = \dot V(K)
\end{cases}
\end{equation}
with periodic boundary conditions. Therefore, by a simple variational argument, $\underline V = T_KV$ minimises the expression 
$$ \int  \norm{\dot {\underline V}(s)}_{\dot H^1}^2  +\alpha \norm{\dot {\underline V}(s)}_{L^2}^2 ds $$
in the family of functions $\underline V$ which are equal to $V$ on $[-K,K]$. Let $\rho \in C^\infty_c$ be a function such that $\rho \equiv 1$ on $[-L^\theta/8,L^\theta/8]$, and $\supp(\rho) \subseteq [-L^\theta/4,L^\theta/4]$. Let $V_n$ be a minimising sequence for \eqref{eqn: Fscale2}, and let 
$W_n:= \rho T_KV_n.$ We have that 
\begin{align*}
& \eqref{eqn: Fscale3}\\
\ge &\ \E\Big[ F(\ph(1) + W_n(1)) - \frac12 \int \norm{\dot W_n(s)}_{\dot H^1}^2  +\alpha \norm{\dot W_n(s)}_{L^2}^2 ds\Big] \\
= &\ \E\Big[ F(\ph(1) + V_n(1)) - \frac12 \int   \norm{\rho T_K\dot V_n(s)}_{\dot H^1}^2  +\alpha \norm{\rho T_K\dot V_n(s)}_{L^2}^2 ds\Big] \\
\ge &\ \E\Big[ F(\ph(1) + V_n(1)) - \frac{1+ 2Ce^{-\sqrt{\alpha} (\frac{L^\theta}8-K)}}2 \\
&\ \phantom{\E\Big[ ]}\times \left( \int  \norm{ T_K\partial_x \dot V_n(s)\1_{[-L^\theta/4, L^\theta/4]} }_{L^2}^2  +\alpha \norm{ T_K\dot V_n(s)\1_{[-L^\theta/4, L^\theta/4]} }_{L^2}^2 ds\right)\Big] \\
\ge &\ \E\Big[ F(\ph(1) + V_n(1)) - \frac{1+ 2Ce^{-\sqrt{\alpha} (\frac{L^\theta}8-K)}}2 \\
&\phantom{\E\Big[]} \times  \iint \left( |\partial_x \dot V_n(s)|^2  +\alpha |\dot V_n(s)|^2\right) \1_{[-L^\theta/4, L^\theta/4]} ds\Big]\\
\ge &\ \E\Big[ F(\ph(1) + V_n(1)) -    \frac12\iint \left(|\partial_x \dot V_n(s)|^2  +\alpha |\dot V_n(s)|^2\right) \1_{[-L^\theta/4, L^\theta/4]} ds\Big]\\
&- Ce^{-\sqrt{\alpha} (\frac{L^\theta}8-K)} \E\Big[\iint \left(\frac 12|\partial_x \dot V_n(s)|^2  +\alpha |\dot V_n(s)|^2\right) \1_{[-L^\theta/4, L^\theta/4]} ds\Big].
\end{align*}
Therefore, by taking limits in $n$, we obtain 
\begin{equation} \label{eqn: Fscale4}
\eqref{eqn: Fscale3} \ge \eqref{eqn: Fscale2} - Ce^{-\sqrt{\alpha} (\frac{L^\theta}8-K)}  \liminf_n   \E\Big[\iint \left(|\partial_x \dot V_n(s)|^2  +\alpha |\dot V_n(s)|^2\right) \1_{[-L^\theta/4, L^\theta/4]} ds\Big].\end{equation}
Moreover, from the trivial estimate $\eqref{eqn: Fscale2} \ge -\norm{F}_{L^\infty}$ (which we obtain by evaluating in $V=0$), we have that eventually
\begin{align*}
\norm{F}_{L^\infty}& \ge  \E\Big[- F(\ph(1) + V_n(1))\Big] +\E \Big[ \frac12 \iint \left(|\partial_x \dot V_n(s)|^2  +\alpha |\dot V_n(s)|^2\right) \1_{[-L^\theta/4, L^\theta/4]} ds\Big] \\
&\ge - \norm{F}_{L^\infty} +\frac12 \E \Big[  \iint \left(|\partial_x \dot V_n(s)|^2  +\alpha |\dot V_n(s)|^2\right) \1_{[-L^\theta/4, L^\theta/4]} ds\Big],
\end{align*}
so 
$$\E \Big[ \int  \norm{\dot V_n(s)}_{\dot H^1}^2  +\alpha \norm{\dot V_n(s)}_{L^2}^2 ds\Big] \le 4 \norm{F}_{L^\infty}. $$
Plugging this into \eqref{eqn: Fscale4}, we obtain 
$$\eqref{eqn: Fscale3} \le \eqref{eqn: Fscale2} + o(1).
 $$
\end{proof}
The following Lemma combines the previous two Lemmas to obtain the "almost independence" of functionals $F: \D'(K) \to \R$  from $F_\theta(u)$.
\begin{lemma} \label{lemma: small_scale2b}
Let $\gamma \ge \frac p2-1$, let $0 < \theta < \gamma \wedge 1$, and let $\NN > \frac{1}{2\sqrt\alpha}$. Let $M_\theta$ be as in \eqref{Ntheta}. 
Then, for every $F: \D'(K) \to \R$ Borel and bounded, we have  
\begin{equation} \label{eqn: truncated_limit}
\begin{aligned}
&\lim_{L \to \infty} \frac{ \E\Big[ \exp\Big(F(\ph) + \frac \beta {p L^\gamma} \int |\ph|^p \1_{[-L^\theta/2,L^\theta/2]^c}\Big) \1_{\{M_\theta(\ph) \le \NN L\}}\Big]}
{\E\Big[ \exp\Big( \frac \beta {p L^\gamma} \int |\ph|^p \1_{[-L^\theta/2,L^\theta/2]^c}\Big) \1_{\{M_\theta(\ph) \le \NN L\}}\Big]}\\
=& \lim_{L\to \infty} \E\Big[ \exp(F(\ph))\Big].
\end{aligned}
\end{equation}
\end{lemma}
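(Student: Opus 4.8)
The plan is to evaluate $\log N_L$ and $\log D_L$, the numerator and denominator in \eqref{eqn: truncated_limit}, via the Bou\'e--Dupuis formula \eqref{BD}, and to show that their difference converges to $\log\E[\exp(F(\ph))]$; dividing and exponentiating then gives \eqref{eqn: truncated_limit}. Throughout write $\Psi(u):=\frac{\beta}{pL^\gamma}\int|u|^p\1_{[-L^\theta/2,L^\theta/2]^c}$, $G(u):=\1_{\{M_\theta(u)\le\NN L\}}\Psi(u)$, and $\mathcal C(V):=\frac12\int_0^1\big(\norm{\partial_x\dot V(s)}_{L^2}^2+\alpha\norm{\dot V(s)}_{L^2}^2\big)\,ds$ for the control cost, and take $L$ so large that $K<L^\theta/16$. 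Since $\NN>\frac1{2\sqrt\alpha}$ and $M_\theta$ differs from $M$ by a contribution of order $L^\theta=o(L)$, the large deviation bound \eqref{LDE} gives $\P(M_\theta(\ph)>\NN L)\to0$. Using the identity $\E[e^{F+\Psi}\1_E]=\E[e^{F}e^{\Psi\1_E}]-\E[e^{F}\1_{E^c}]$ with $E=\{M_\theta(\ph)\le\NN L\}$, the boundedness of $F$, and $N_L\ge\tfrac12 e^{-\norm{F}_\infty}$ for $L$ large, one gets $\log N_L=\log\widehat N_L+o(1)$ and $\log D_L=\log\widehat D_L+o(1)$, where $\widehat N_L:=\E[\exp(F(\ph)+G(\ph))]$ and $\widehat D_L:=\E[\exp(G(\ph))]$; the advantage is that in $\widehat N_L$ the bounded functional $F$ now enters \emph{additively}, not multiplied by the mass indicator. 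As $G$ satisfies the hypotheses of \eqref{BD} (as in Proposition~\ref{prop:1a}), one obtains
\begin{equation*}
\log\widehat N_L=\sup_{V\in\mathbb H^1_a}\E\big[F(\ph(1)+V(1))+G(\ph(1)+V(1))-\mathcal C(V)\big],
\end{equation*}
and $\log\widehat D_L$ equals the same quantity with the $F$-term deleted --- which is precisely \eqref{eqn: scale1}.

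For the \emph{lower bound}, test this functional with $V=V_1+V_2$, where $V_1,V_2$ are progressively measurable with $\supp V_1\subseteq[-L^\theta/4,L^\theta/4]$ and $\supp V_2\subseteq(-L^\theta/4,L^\theta/4)^c$. Since $K<L^\theta/4$, $F(\ph(1)+V(1))=F(\ph(1)+V_1(1))$; since $V_1$ vanishes on $[-L^\theta/2,L^\theta/2]^c$, $G(\ph(1)+V(1))=G(\ph(1)+V_2(1))$; and since $\dot V_1,\dot V_2$ have supports meeting in a Lebesgue-null set, $\mathcal C(V)=\mathcal C(V_1)+\mathcal C(V_2)$. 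Thus the functional splits, and optimising independently over $V_1$ and over $V_2$ gives $\log\widehat N_L\ge\eqref{eqn: Fscale3}+\eqref{eqn: scale3}$, the two quantities of Lemmas~\ref{lemma: small_scale2} and~\ref{lemma: large_scale} (for the present $\theta$ and, in the first, the functional $F$). By Lemma~\ref{lemma: small_scale2}, $\eqref{eqn: Fscale3}=\eqref{eqn: Fscale1}+o(1)=\log\E[\exp(F(\ph))]+o(1)$, the last step by \eqref{BD}; by Lemma~\ref{lemma: large_scale}, $\eqref{eqn: scale3}=\eqref{eqn: scale1}+o(1)=\log\widehat D_L+o(1)$. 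Hence $\log\widehat N_L\ge\log\E[\exp(F(\ph))]+\log\widehat D_L+o(1)$.

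For the \emph{upper bound}, I follow the scheme of the proofs of Lemmas~\ref{lemma: large_scale} and~\ref{lemma: small_scale2}. Fix a near-maximiser $V$. Exactly as in the proof of Lemma~\ref{lemma: large_scale} --- using the Gagliardo--Nirenberg inequality \eqref{GNStorus2} and the constraint $M_\theta(\ph(1)+V(1))\le\NN L$ on the support of $G$ --- one gets an a priori bound $\E\,\mathcal C(V)\les L^{C}$ for some $C=C(p,\alpha,\NN,\gamma,\theta)$. Let $\widetilde V$ be obtained from $V$ by replacing it, on each of $[K,L^\theta/2]$ and $[-L^\theta/2,-K]$, by the unique minimal-cost function with the same boundary values, i.e.\ the composition of the extension operators $T_K$ of Lemma~\ref{lemma: small_scale2} and $T_{y_0}$ of Lemma~\ref{lemma: large_scale}. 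This operation is deterministic and filtration-preserving, does not increase $\mathcal C$, leaves $F(\ph(1)+\cdot)$ and $G(\ph(1)+\cdot)$ unchanged (they see only $[-K,K]$ and $[-L^\theta/2,L^\theta/2]^c$, where $\widetilde V=V$), and makes $\dot{\widetilde V}(s)$ decay like $e^{-\sqrt\alpha\,\dist(\cdot,\{\pm K,\pm L^\theta/2\})}$ on those two intervals. Now pick disjointly supported smooth cut-offs $\chi_1$ (with $\chi_1\equiv1$ on $[-L^\theta/8,L^\theta/8]$, $\supp\chi_1\subseteq[-L^\theta/4,L^\theta/4]$) and $\chi_2$ (with $\chi_2\equiv1$ on $[-3L^\theta/8,3L^\theta/8]^c$, $\supp\chi_2\subseteq(-L^\theta/4,L^\theta/4)^c$), with $\norm{\chi_i'}_\infty\les L^{-\theta}$, and set $V_1:=\chi_1\widetilde V$, $V_2:=\chi_2\widetilde V$. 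Then $F(\ph(1)+\widetilde V(1))=F(\ph(1)+V_1(1))$, $G(\ph(1)+\widetilde V(1))=G(\ph(1)+V_2(1))$, and upon expanding $\norm{(\chi_i\dot{\widetilde V}(s))'}_{L^2}^2$ every term other than $\norm{\chi_i\,\partial_x\dot{\widetilde V}(s)}_{L^2}^2$ is supported in $\{L^\theta/8\le|x|\le3L^\theta/8\}$, where $\widetilde V(s)$ is at most $e^{-\sqrt\alpha L^\theta/16}$ times its boundary values; by the a priori bound (and Sobolev embedding, to bound those boundary values) these contribute $\les L^{-\theta}e^{-cL^\theta}L^{C}=o(1)$ in expectation. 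Hence $\E\mathcal C(V_1)+\E\mathcal C(V_2)\le\E\mathcal C(\widetilde V)+o(1)\le\E\mathcal C(V)+o(1)$, so for the near-maximiser $V$,
\begin{equation*}
\log\widehat N_L\le\E\big[F(\ph(1)+V_1(1))-\mathcal C(V_1)\big]+\E\big[G(\ph(1)+V_2(1))-\mathcal C(V_2)\big]+o(1),
\end{equation*}
and bounding the first bracket by $\sup_{V'\in\mathbb H^1_a}\E[F(\ph(1)+V'(1))-\mathcal C(V')]=\log\E[\exp(F(\ph))]$ (by \eqref{BD}) and the second by $\eqref{eqn: scale3}\le\eqref{eqn: scale1}=\log\widehat D_L$ gives $\log\widehat N_L\le\log\E[\exp(F(\ph))]+\log\widehat D_L+o(1)$.

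Combining the two bounds, $\log N_L=\log D_L+\log\E[\exp(F(\ph))]+o(1)$, which is \eqref{eqn: truncated_limit}. The step I expect to be the main obstacle is the upper bound --- more precisely, excising the ``middle scales'' $[-L^\theta/2,L^\theta/2]\setminus[-K,K]$ of a near-maximiser at cost $o(1)$: this requires performing the two cost-minimising surgeries of Lemmas~\ref{lemma: large_scale} and~\ref{lemma: small_scale2} simultaneously, exploiting the exponential decay of their output, and controlling the cut-off errors by a polynomial-in-$L$ a priori bound on the control cost of near-maximisers, which itself rests on the mass constraint and Gagliardo--Nirenberg exactly as in Lemma~\ref{lemma: large_scale}.
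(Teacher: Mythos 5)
Your proof is correct and follows the paper's overall strategy: you reduce to the ``indicator inside the exponential'' form $\widehat N_L = \E[\exp(F+G)]$, $\widehat D_L = \E[\exp(G)]$ using the boundedness of $F$ and $\P(M_\theta(\ph)>\NN L)\to 0$, apply the Bou\'e--Dupuis representation to both, and handle the lower bound exactly as the paper does, by testing with disjointly supported $V=V_1+V_2$ and invoking Lemmas~\ref{lemma: large_scale} and~\ref{lemma: small_scale2}. (You also correctly noticed that the $-F$ on the right-hand side of \eqref{eqn: truncated_limit} should read $F$ to match the numerator; the paper's own proof works with the corrected version \eqref{eqn: limit1}.) Where you diverge is the upper bound. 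The paper takes a shortcut: it simply discards the cost on the annulus $\{L^\theta/4<|x|<L^\theta/2\}$, which only enlarges the supremum, then uses sup-additivity to split the single supremum over $V$ into the two ``partial-cost'' quantities \eqref{eqn: Fscale2} and \eqref{eqn: scale2}, and finally invokes the equalities $\eqref{eqn: Fscale2}=\eqref{eqn: Fscale1}+o(1)$ and $\eqref{eqn: scale2}=\eqref{eqn: scale1}+o(1)$ from Lemmas~\ref{lemma: small_scale2} and~\ref{lemma: large_scale} --- no construction of $V_1,V_2$ is needed at all. You instead perform the surgery ($T_K$--$T_{y_0}$ extension, cut-offs $\chi_1,\chi_2$, exponential decay plus an a priori polynomial bound on the cost of near-maximisers) directly inside this proof; this is valid but reproduces precisely the hard directions of Lemmas~\ref{lemma: large_scale} and~\ref{lemma: small_scale2}, which the paper already established and uses here as black boxes. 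The two routes buy the same thing; the paper's is shorter given the lemmas in hand, while yours is more self-contained but redundant.
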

\begin{proof}
Recall that $\P(\{M_\theta(\ph) \le \NN L\}) \to 1$ as $L \to \infty$, since $\NN \ge \frac 1 {2\sqrt\alpha}$. Moreover, 
$$\exp\Big(\frac \beta {p L^\gamma} \int |\ph|^p \1_{[-L^\theta/2,L^\theta/2]^c}\Big)  \ge 1,$$
and similarly
$$\exp\Big(F(\ph) + \frac \beta {p L^\gamma} \int |\ph|^p \1_{[-L^\theta/2,L^\theta/2]^c}\Big)  \ge \exp(-\norm{F}_{L^\infty}).$$
Therefore,
\begin{align*}
&\E\Big[ \exp\Big(\frac \beta {p L^\gamma} \int |\ph|^p \1_{[-L^\theta/2,L^\theta/2]^c}\Big) \1_{\{M_\theta(\ph) \le \NN L\}}\Big] \\
=&\ \E\Big[ \exp\Big(\1_{\{M_\theta(\ph) \le \NN L\}} \frac \beta {p L^\gamma} \int |\ph|^p \1_{[-L^\theta/2,L^\theta/2]^c}\Big) \Big](1+o(1))
\end{align*}
and similarly
\begin{align*}
&\E\Big[ \exp\Big(F(\ph) + \frac \beta {p L^\gamma} \int |\ph|^p \1_{[-L^\theta/2,L^\theta/2]^c}\Big) \1_{\{M_\theta(\ph) \le \NN L\}}\Big] \\
=&\ \E\Big[ \exp\Big(F(\ph) + \1_{\{M_\theta(\ph) \le \NN L\}} \frac \beta {p L^\gamma} \int |\ph|^p \1_{[-L^\theta/2,L^\theta/2]^c}\Big) \Big](1+o(1))
\end{align*}
Therefore, \eqref{eqn: truncated_limit} is equivalent to 
\begin{equation} \label{eqn: limit1}
\begin{aligned}
&\lim_{L \to \infty} \frac{ \E\Big[ \exp\Big(F(\ph) + \1_{\{M_\theta(\ph) \le \NN L\}} \frac \beta {p L^\gamma} \int |\ph|^p \1_{[-L^\theta/2,L^\theta/2]^c}\Big)\Big]}
{\E\Big[ \exp\Big( \1_{\{M_\theta(\ph) \le \NN L\}} \frac \beta {p L^\gamma} \int |\ph|^p \1_{[-L^\theta/2,L^\theta/2]^c}\Big)\Big]}\\
=& \lim_{L\to \infty} \E\Big[ \exp(F(\ph))\Big].
\end{aligned}
\end{equation}
By the Bou\'e-Dupuis formula \eqref{BD}, this is equivalent to 
\begin{align}
&\sup_{V \in \mathbb H^1_a} \E\Big[F(\ph(1) + V(1)) \notag\\
&\phantom{\inf_{V \in \mathbb H^1_a} \E\Big[]}+\1_{ \{ M_\theta(\ph(1)+V(1)) \leq \NN L  \}}\frac{\beta}{pL^\gamma} \int \left(|\ph(1)+V(1)|^p \1_{[-L^\theta/2,L^\theta/2]^c} \right) \notag \\
&\phantom{\inf_{V \in \mathbb H^1_a} \E\Big[]} -  \int \frac12 \norm{\dot V(s)}_{\dot H^1}^2  +\alpha \norm{\dot V(s)}_{L^2}^2 ds \Big] \label{eqn: sum} \\
=&\ \sup_{V \in \mathbb H^1_a} \E\Big[ F(\ph(1) + V(1)) - \frac12 \int  \norm{\dot V(s)}_{\dot H^1}^2  +\alpha \norm{\dot V(s)}_{L^2}^2 ds\Big] \label{eqn: sum1}\\
&+ \sup_{V \in \mathbb H^1_a} \E\Big[\1_{ \{ M_\theta(\ph(1)+V(1)) \leq \NN L  \}}\frac{\beta}{pL^\gamma} \int \left(|\ph(1)+V(1)|^p \1_{[-L^\theta/2,L^\theta/2]^c} \right) \notag \\
&\phantom{+\inf_{V \in \mathbb H^1_a} \E\Big[]} -  \frac12\int  \norm{\dot V(s)}_{\dot H^1}^2  +\alpha \norm{\dot V(s)}_{L^2}^2 ds \Big] +o(1)\label{eqn: sum2}
\end{align}
By estimating the $\sup$ in \eqref{eqn: sum} using functions $V$ in the form $V = V_1 + V_2$, where $\supp(V_1) \subseteq [-L^\theta/4,L^\theta/4]$ and $\supp(V_2) \subseteq (-L^\theta/4,L^\theta/4)^c$, we obtain
\begin{align*}
&\eqref{eqn: sum} \\
\ge&\ \sup_{V_1 \in \mathbb H^1_a, \atop \supp(V_1) \subseteq [-L^\theta/4,L^\theta/4]} \sup_{V_2 \in \mathbb H^1_a, \atop \supp(V_2) \subseteq (-L^\theta/4,L^\theta/4)^c} \\
&\Bigg\{\E\Big[ F(\ph(1) + V_1(1)) - \frac12\int  \norm{\dot V_1(s)}_{\dot H^1}^2  +\alpha \norm{\dot V_1(s)}_{L^2}^2 ds\Big] \label{eqn: sum1}\\
&+  \E\Big[\1_{ \{ M_\theta(\ph(1)+V_2(1)) \leq \NN L  \}}\frac{\beta}{pL^\gamma} \int \left(|\ph(1)+V_2(1)|^p \1_{[-L^\theta/2,L^\theta/2]^c} \right) \notag \\
&\phantom{+\E\Big[]} -  \frac12\int \norm{\dot V_2(s)}_{\dot H^1}^2  +\alpha \norm{\dot V_2(s)}_{L^2}^2 ds \Big] \Bigg\}\\
=& \eqref{eqn: Fscale3} + \eqref{eqn: scale3} \\
=& \eqref{eqn: sum1} + \eqref{eqn: sum2} + o(1),
\end{align*}
where we used Lemma \ref{lemma: large_scale} and Lemma \ref{lemma: small_scale2} for the last equality.
Moreover,
\begin{align*}
&\eqref{eqn: sum} \\
= &\ \sup_{V \in \mathbb H^1_a} \E\Big[F(\ph(1) + V(1)) \\
&\phantom{\inf_{V \in \mathbb H^1_a} \E\Big[]}- \frac12\iint \left(|\partial_x \dot V(s)|^2 + \alpha|\dot V(s)|^2\right)\1_{[-L^\theta/4,L^\theta/4]} ds dx\\
&\phantom{\inf_{V \in \mathbb H^1_a} \E\Big[]}+\1_{ \{ M_\theta(\ph(1)+V(1)) \leq \NN L  \}}\frac{\beta}{pL^\gamma} \int \left(|\ph(1)+V(1)|^p \1_{[-L^\theta/2,L^\theta/2]^c} \right)  \\
&\phantom{\inf_{V \in \mathbb H^1_a} \E\Big[]}- \frac12 \iint \left(|\partial_x \dot V(s)|^2 + \alpha|\dot V(s)|^2\right)\1_{(-L^\theta/2,L^\theta/2)^c} ds dx\Big] \\
\le &\ \sup_{V \in \mathbb H^1_a} \E\Big[F(\ph(1) + V(1)) \\
&\phantom{\inf_{V \in \mathbb H^1_a} \E\Big[]}- \frac12 \iint \left( |\partial_x \dot V(s)|^2 + \alpha|\dot V(s)|^2\right)\1_{[-L^\theta/4,L^\theta/4]} ds dx\Big]\\
&\ + \sup_{V \in \mathbb H^1_a} \E\Big[\1_{ \{ M_\theta(\ph(1)+V(1)) \leq \NN L  \}}\frac{\beta}{pL^\gamma} \int \left(|\ph(1)+V(1)|^p \1_{[-L^\theta/2,L^\theta/2]^c} \right)  \\
&\phantom{\inf_{V \in \mathbb H^1_a} \E\Big[]}- \frac12 \iint \left(|\partial_x \dot V(s)|^2 + \alpha|\dot V(s)|^2\right)\1_{(-L^\theta/2,L^\theta/2)^c} ds dx\Big] \\ 
=& \eqref{eqn: Fscale2} + \eqref{eqn: scale2} \\
=& \eqref{eqn: sum1} + \eqref{eqn: sum2} + o(1),
\end{align*}
where we used again Lemmas \ref{lemma: large_scale} and \ref{lemma: small_scale2} for the last equality.
\end{proof}

The following crucial Lemma states that removing an interval of size $L^\theta \ll L$ from the domain does not significantly alter the value of the partition function. Its proof relies on translation invariance. 

\begin{lemma} \label{lemma: cutoff_removal}
Let $\gamma \ge \frac p2-1$, let $0<\theta<\gamma\wedge 1$, and let $\NN > \frac 1 {2\sqrt \alpha}$. Let $\beta < \beta_0$ (defined in Lemma \ref{lemma:small_scale1}).
Then for any bounded Borel functional $F$ 
\begin{equation} \label{eqn: cutoff_removal}
\lim_{L \to \infty} \frac{\E\Big[ \exp\Big( F(\ph) + \frac \beta {p L^\gamma} \int |\ph|^p \1_{[-L^\theta/2,L^\theta/2]^c}\Big) \1_{\{M(\ph) \le \NN L\}} \Big]}
{\E\Big[ \exp\Big( F( \ph) + \frac \beta {p L^\gamma} \int |\ph|^p \Big) \1_{\{M(\ph) \le \NN L\}} \Big]} = 1.
\end{equation}
\end{lemma}
\begin{proof}
We claim that it is enough to prove the statement for $F=0$. Indeed, desired statement can be rewritten as 
\begin{align*}
\Big| \frac{\int f_L(\ph) \exp(F(\ph)) \nu_L (d\ph) }{\int \exp(F(\ph)) \nu_L (d\ph)} - 1 \Big| \to 0,
\end{align*}
for $f_L(\ph) = \exp ( - \frac \beta {p L^\gamma} \int |\ph|^p \1_{[-L^\theta/2,L^\theta/2]})$ and $\nu_L$ is the probability measure proportional to $  \exp(  \frac \beta {p L^\gamma} \int |\ph|^p ) \1_{\{M(\ph) \le \NN L\}} \mu_L$.
We note that $f_L \leq 1$ and therefore 
\begin{align*}
&\Big| \frac{\int f_L(\ph) \exp(F(\ph)) \nu_L (d\ph) }{\int \exp(F(\ph)) \nu_L (d\ph)} - 1 \Big|  \\
&=  \frac{\int  (1 - f_L(\ph)) \exp(F(\ph)) \nu_L (d\ph) }{\int \exp(F(\ph)) \nu_L (d\ph)}  \\
& \leq  \| (1 - f_L(\ph)) \|_{L^1(\nu_L)} \| \exp (2 \| F \|_{L^\infty} )\\
&= \Big|  \int f_L(\ph) \nu_L(d\ph) - 1 \Big| \| \exp (2 \| F \|_{L^\infty} ),
\end{align*}
which goes to zero if \eqref{eqn: cutoff_removal} holds for $F = 0$.

We clearly have that 
\begin{align*}
&~\E\Big[ \exp\Big(\frac \beta {p L^\gamma} \int |\ph|^p \1_{[-L^\theta/2,L^\theta/2]^c}\Big) \1_{\{M(\ph) \le \NN L\}} \Big] \\
\le &~{\E\Big[ \exp\Big(\frac \beta {p L^\gamma} \int |\ph|^p \Big) \1_{\{M(\ph) \le \NN L\}} \Big]},
\end{align*}
so it is enough to show that 
\begin{equation}\label{cr0}
\liminf_{L \to \infty} \frac{\E\Big[ \exp\Big(\frac \beta {p L^\gamma} \int |\ph|^p \1_{[-L^\theta/2,L^\theta/2]^c}\Big) \1_{\{M(\ph) \le \NN L\}} \Big]}
{\E\Big[ \exp\Big(\frac \beta {p L^\gamma} \int |\ph|^p \Big) \1_{\{M(\ph) \le \NN L\}} \Big]} \ge 1.
\end{equation}
To show this, we exploit translation invariance of the law $\mu$ of $\ph$. In particular, we have that for every $y \in [-L/2, L/2]$, 
\begin{align*}
&\E\Big[ \exp\Big(\frac \beta {p L^\gamma} \int |\ph|^p \1_{[-L^\theta/2,L^\theta/2]^c}\Big) \1_{\{M(\ph) \le \NN L\}}\Big] \\
&= \E\Big[ \exp\Big(\frac \beta {p L^\gamma} \int |\ph|^p \1_{[y-L^\theta/2,y+L^\theta/2]^c}\Big) \1_{\{M(\ph) \le \NN L\}}\Big], 
\end{align*}
where the interval $[y-L^\theta/2,y+L^\theta/2]$ and its complementary are understood modulo $L$. Therefore, by Jensen's inequality applied to the exponential, 
\begin{align*}
&\E\Big[ \exp\Big(\frac \beta {p L^\gamma} \int |\ph|^p \1_{[-L^\theta/2,L^\theta/2]^c}\Big) \1_{\{M(\ph) \le \NN L\}}\Big] \\
=&\ \frac1L \int_{-L/2}^{L/2} \E\Big[ \exp\Big(\frac \beta {p L^\gamma} \int |\ph|^p \1_{[y-L^\theta/2,y+L^\theta/2]^c}\Big) \1_{\{M(\ph) \le \NN L\}}\Big] dy \\ 
\ge&\ \E\Big[ \exp\Big(\frac \beta {p L^\gamma} \int |\ph|^p \frac 1 L \int_{-L/2}^{L/2} \1_{[y-L^\theta/2,y+L^\theta/2]^c}dy \Big) \1_{\{M(\ph) \le \NN L\}}\Big] \\
=&\ \E\Big[ \exp\Big(\frac {(1-L^{\theta - 1})\beta} {p L^\gamma} \int |\ph|^p\Big) \1_{\{M(\ph) \le \NN L\}}\Big]. \label{cr1}\numberthis
\end{align*}
By H\"older, we have that 
\begin{align*}
& \E\Big[ \exp\Big(\frac {\beta} {p L^\gamma} \int |\ph|^p\Big) \1_{\{M(\ph) \le \NN L\}}\Big] \\
\le&\ \eqref{cr1}^{\frac{1}{1-L^{\theta-1}}} \E\Big[ \exp\Big(\frac {\beta} {p L^\gamma} \int |\ph|^p\Big) \1_{\{M(\ph) \le \NN L\}}\Big]^{L^{\theta-1}}.
\end{align*}
Therefore, by Lemma \ref{lemma: theta_removal}, we obtain that for some constant $C >0$,
\begin{align*}
\eqref{cr1} &\ge \E\Big[ \exp\Big(\frac {\beta} {p L^\gamma} \int |\ph|^p\Big) \1_{\{M(\ph) \le \NN L\}}\Big]^{(1-L^{\ta-1})^2} \\
&\ge \E\Big[ \exp\Big(\frac {\beta} {p L^\gamma} \int |\ph|^p\Big) \1_{\{M(\ph) \le \NN L\}}\Big] \big( \exp(CL^{1-\gamma}) \big)^{-2L^{\ta-1}} \\
&\ge \E\Big[ \exp\Big(\frac {\beta} {p L^\gamma} \int |\ph|^p\Big) \1_{\{M(\ph) \le \NN L\}}\Big]  \exp(-2CL^{\ta-\gamma}) \\
&= \E\Big[ \exp\Big(\frac {\beta} {p L^\gamma} \int |\ph|^p\Big) \1_{\{M(\ph) \le \NN L\}}\Big]  (1 + o(1)) 
\end{align*}
as $L\to \infty$. From this, \eqref{cr0} follows immediately.
\end{proof}

The following two technical Lemmas deal with stability with respect to changing the mass constraint. The first of the two 
 states that changing the mass constraint by a small multiple of $L$ only changes the partition function by $o(1)$.
\begin{lemma} \label{lemma: eps_removal}
Let $\gamma \ge \frac p2-1$, let $0<\theta<\gamma$, and let $\NN > \frac 1 {2\sqrt \alpha}$. Let $\eta < \NN - \frac 1 {2\sqrt\alpha}$, and if $\gamma = \frac p2 -1$, for $\beta_0$ as in Lemma \ref{lemma:small_scale1}, let $\beta < \beta_0$. Then, as $L \to \infty$, 
\begin{equation} \label{eqn: eps_removal}
\begin{aligned}
&~{\E\Big[ \exp\Big( \frac \beta {p L^\gamma} \int |\ph|^p\Big) \1_{\{M(\ph) \le \NN L\}}\Big]} \\
=&~{\E\Big[ \exp\Big( \frac \beta {p L^\gamma} \int |\ph|^p\Big) \1_{\{M(\ph) \le (\NN-\eta) L\}}\Big]} +o(1)
\end{aligned}
\end{equation}
\end{lemma}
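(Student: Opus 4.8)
The plan is to prove the equivalent statement that
\[
D_L := \E\Big[ \exp\big(\tfrac{\beta}{pL^\gamma}\int|\ph|^p\big)\1_{\{(\NN-\eta)L < M(\ph) \le \NN L\}}\Big] \longrightarrow 0
\]
as $L\to\infty$, since $D_L$ is exactly the difference of the two sides of \eqref{eqn: eps_removal}. Set $E := \{(\NN-\eta)L < M(\ph) \le \NN L\}$ and $F := \tfrac{\beta}{pL^\gamma}\int|\ph|^p \ge 0$. By the elementary identity $\E[e^F\1_E] = \E[e^{F\1_E}] - \P(E^c)$ and the fact that $\P(M(\ph) \le cL) \to 1$ for every $c > \tfrac1{2\sqrt\alpha}$ (from the large deviation estimate \eqref{LDE2}, applicable since both $\NN$ and $\NN-\eta$ exceed $\tfrac1{2\sqrt\alpha}$), we get $\P(E) \to 0$, so $D_L = \E[e^{F\1_E}] - 1 + \P(E)$; since $F\1_E \ge 0$ forces $\E[e^{F\1_E}] \ge 1$, the claim reduces to $\limsup_L \log \E[e^{F\1_E}] \le 0$.

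By Proposition \ref{prop:1a} the Bou\'e--Dupuis formula \eqref{BD} applies to $F\1_E$ (which satisfies $0\le F\1_E \le F\1_{\{M(\ph)\le\NN L\}}$); combining it with Jensen's inequality $\tfrac12\int_0^1\norm{\partial_x\dot V}_{L^2}^2 + \alpha\norm{\dot V}_{L^2}^2\,ds \ge \tfrac12\norm{\partial_x V(1)}_{L^2}^2 + \tfrac\alpha2\norm{V(1)}_{L^2}^2$ and the standard step of replacing the random $V(1)$ by the worst deterministic $H^1$-function gives $\log \E[e^{F\1_E}] \le \E[\Phi(\ph(1))]$, where
\[
\Phi(\psi):=\sup_{w\in H^1}\Big(\tfrac{\beta}{pL^\gamma}\int|\psi+w|^p\,\1_{\{(\NN-\eta)L<M(\psi+w)\le\NN L\}}-\tfrac12\norm{\partial_x w}_{L^2}^2-\tfrac\alpha2\norm{w}_{L^2}^2\Big)\ge 0.
\]
Fix $\eps'\in(0,\NN-\eta-\tfrac1{2\sqrt\alpha})$ and $\eps''\in(0,\gamma)$ and split $\E[\Phi(\ph(1))] = \E[\Phi(\ph(1))\1_G] + \E[\Phi(\ph(1))\1_{G^c}]$ along the good event $G := \{M(\ph(1))\le(\tfrac1{2\sqrt\alpha}+\eps')L\}\cap\{\int|\ph(1)|^p\le L^{1+\eps''}\}$, whose complement has probability decaying faster than any power of $L$ (by \eqref{LDE2} and a Gaussian moment estimate for $\int|\ph(1)|^p$). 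On $G^c$ we use the crude bound $\Phi(\psi)\le\tilde\Phi(\psi):=\sup_{w\in H^1}\big(\tfrac{\beta}{pL^\gamma}\int|\psi+w|^p\1_{\{M(\psi+w)\le\NN L\}}-\tfrac12\norm{\partial_x w}_{L^2}^2-\tfrac\alpha2\norm{w}_{L^2}^2\big)$: after the elementary splitting $\int|\psi+w|^p\le(1+\dl)\int|w|^p+C_\dl\int|\psi|^p$, the Gagliardo--Nirenberg inequality \eqref{GNStorus2} applied to $w$, the bound $\norm{w}_{L^2}^2\le 2\NN L + 2\norm{\psi}_{L^2}^2$ on the set where the indicator is nonzero, and optimisation over $\norm{\partial_x w}_{L^2}$, one bounds $\tilde\Phi(\psi)$ by a fixed polynomial in $L$, $\norm{\psi}_{L^2}^2$ and $\int|\psi|^p$; hence $\E[\Phi(\ph(1))\1_{G^c}] \le \E[\tilde\Phi(\ph(1))^2]^{1/2}\P(G^c)^{1/2}\to 0$ since $\E[\tilde\Phi(\ph(1))^2]$ grows only polynomially in $L$.

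The heart of the argument is the good event. For $\psi\in G$ and any $w$ realising the indicator in $\Phi(\psi)$, the triangle inequality in $L^2$ gives $\norm{w}_{L^2}^2\le C_\NN L$ with $C_\NN:=(\sqrt\NN+\sqrt{\tfrac1{2\sqrt\alpha}+\eps'})^2$, while $M(\psi+w)>(\NN-\eta)L$ forces $\norm{w}_{L^2}^2>c_1 L$ with $c_1:=(\sqrt{\NN-\eta}-\sqrt{\tfrac1{2\sqrt\alpha}+\eps'})^2>0$ (this positivity is exactly the hypothesis $\eta<\NN-\tfrac1{2\sqrt\alpha}$). Writing $m:=\norm{w}_{L^2}^2\in[c_1 L, C_\NN L]$ and applying \eqref{GNStorus2} to $w$ together with the splitting above (using $\int|\psi|^p\le L^{1+\eps''}$ on $G$) and optimising over $\norm{\partial_x w}_{L^2}$, a direct computation using $\gamma\ge\tfrac p2-1$ shows
\[
\tfrac{\beta}{pL^\gamma}\int|\psi+w|^p\1_{\{\cdots\}}-\tfrac12\norm{\partial_x w}_{L^2}^2-\tfrac\alpha2\norm{w}_{L^2}^2 \ \le\ o(L) + L\,g\big(\tfrac mL\big),
\]
uniformly in such $w$ and in $\psi\in G$, where $g(\tau):=c_p\big(\tfrac{\beta(1+\dl)^2 C_{GNS}^p}{p}\big)^{\frac4{6-p}}\tau^{\frac{p+2}{6-p}}-\tfrac\alpha2\tau$ if $\gamma=\tfrac p2-1$, while if $\gamma>\tfrac p2-1$ one may take $g(\tau)=-\tfrac\alpha2\tau$ (the Gagliardo--Nirenberg term then carrying a strictly negative power of $L$ and being absorbed into the $o(L)$). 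Since $(p+2)/(6-p)>1$, $g(\tau)/\tau$ is increasing, so $g<0$ on $(0,C_\NN]$ provided $g(C_\NN)<0$: automatic when $\gamma>\tfrac p2-1$, and when $\gamma=\tfrac p2-1$ true for $\beta$ below an explicit threshold of the same type as the one defining $\beta_0$ in Lemma \ref{lemma:small_scale1} (depending only on $p,\alpha,\NN$, hence uniform in $\eta$), which we may assume. Then $\max_{[c_1,C_\NN]}g=:-c_2<0$, so the displayed quantity is $\le o(L)-c_2 L<0$ for $L$ large, while $w=0$ contributes $0$ (its indicator vanishing because $M(\psi)<(\NN-\eta)L$ on $G$); thus $\Phi(\psi)=0$ on $G$ for $L$ large, whence $\E[\Phi(\ph(1))\1_G]=0$ eventually and $\E[\Phi(\ph(1))]\to 0$, proving the claim.

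The main difficulty lies in the last step: one must keep the Gagliardo--Nirenberg estimate sharp enough that the gain $\tfrac{\beta}{pL^\gamma}\int|\psi+w|^p$ is beaten by the mass cost $\tfrac\alpha2\norm{w}_{L^2}^2$. The key device for this is to apply \eqref{GNStorus2} to the $H^1$-function $w=V(1)$ rather than to $\ph(1)+V(1)$, whose irregular part has infinite $H^1$-norm, thereby isolating the rough contribution in the additive term $\int|\ph(1)|^p$ controlled on $G$; in the critical regime the balance is only borderline, which is precisely why $\beta<\beta_0$ is needed, and the fact that the relevant quantity in $g(C_\NN)<0$ is $C_\NN$ (the maximal allowed mass) and not $c_1$ (the excess mass) is what makes the required smallness of $\beta$ uniform over admissible $\eta$.
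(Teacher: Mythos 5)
Your proposal is correct in its essential structure but follows a genuinely different route from the paper. After the common reduction to $\E[e^F\1_E]\to 0$ and the Bou\'e--Dupuis formula, the paper's own proof mirrors Lemma \ref{lemma:small_scale1}: it subtracts the mollified field via the substitution $V=-\ph_\eps+W$ (with $\ph_\eps$ as in \eqref{phi_eps}), which reduces matters to an essentially deterministic minimisation in $W$ plus vanishing error terms, and it encodes the constraint $M(\ph)>(\NN-\eta)L$ by the trick $e^{F\1_B}\1_A\le e^{F\1_B-L\1_{A^c}}$; the extra $+L$ term then forces the deterministic minimum to diverge, giving $\log(\cdots)\to-\infty$, a slightly stronger conclusion than the $o(1)$ required. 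You instead push the supremum inside the expectation (as the paper does in its upper-bound arguments, Proposition \ref{prop: ub} and Lemma \ref{lemma: ub2}) and then condition on a good event $G$ where $\ph(1)$ is typical; on $G$ you deduce that the only $w$ making the indicator nonzero must carry mass $\gtrsim L$, and the $L^2$ cost then defeats the Gagliardo--Nirenberg gain so that the pathwise sup is exactly zero, while $G^c$ is disposed of by a crude polynomial bound on $\tilde\Phi$ plus Cauchy--Schwarz. Both routes rely on the same mechanism (applying GNS to the $H^1$ shift alone, and getting the mass constraint to produce a coercive $L^2$ penalty); the paper's $\ph_\eps$-trick handles the roughness of $\ph(1)$ analytically, whereas yours handles it probabilistically via the good set, at the price of needing the polynomial moment bound on $\tilde\Phi$ and super-polynomial decay of $\P(G^c)$ (which does go through: \eqref{LDE} for the mass part, and Chebyshev with a $k$th moment bound $\E(\int|\ph|^p)^k\lesssim L^k$ for the $L^p$ part). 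Your approach is conceptually clean and gives a transparent picture (the density is literally $1$ off a negligible set), while the paper's gives the stronger exponential-in-$L$ smallness for free. The one soft spot is that the smallness threshold you derive for $\beta$ from $g(C_\NN)<0$ is not verbatim the $\beta_0$ of Lemma \ref{lemma:small_scale1} (which comes from $\dl(\eps)>0$ in \eqref{A5} with the auxiliary constant $\sigma$); you acknowledge this and it is harmless here, since $\beta_0$ is only defined implicitly and in both arguments the threshold depends solely on $p,\alpha,\NN$ and is uniform in $\eta$, but for a fully self-contained write-up you should either exhibit that your threshold dominates the paper's or replace $\beta_0$ throughout by the minimum of the two.
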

\begin{proof}
We have that 
\begin{align*}
&~\E\Big[ \exp\Big( \frac \beta {p L^\gamma} \int |\ph|^p) \1_{\{M(\ph) \le \NN L\}}\Big] \\
= &~ \E\Big[ \exp\Big( \frac \beta {p L^\gamma} \int |\ph|^p\Big) \1_{\{M(\ph) \le (\NN-\eta) L\}}\Big] \\
& + \E\Big[ \exp\Big( \frac \beta {p L^\gamma} \int |\ph|^p \Big) \1_{\{(\NN-\eta) L < M(\ph) \le \NN L \}}\Big],
\end{align*}
so we just need to prove that 
\begin{equation*}
\E\Big[ \exp\Big( \frac \beta {p L^\gamma} \int |\ph|^p \Big) \1_{\{(\NN-\eta) L < M(\ph) \le \NN L \}}\Big] = o(1)
\end{equation*}
as $L \to \infty$. We have that 
\begin{align*}
&~\E\Big[ \exp\Big( \frac \beta {p L^\gamma} \int |\ph|^p \Big) \1_{\{(\NN-\eta) L < M(\ph) \le \NN L \}}\Big] \\
= &~\E\Big[ \exp\Big(\1_{\{M(\ph) \le \NN L \}} \frac \beta {p L^\gamma} \int |\ph|^p \Big) \1_{\{M(\ph) > (\NN-\eta) L \}}\Big] +o(1) \\
\le &~ \E\Big[ \exp\Big(\1_{\{M(\ph) \le \NN L \}} \frac \beta {p L^\gamma} \int |\ph|^p - L\1_{\{M(\ph) \le (\NN-\eta) L \}}\Big)\Big] +o(1).
\end{align*}
So, from the Bou\'e-Dupuis formula \eqref{BD}, it is enough to prove that 
\begin{equation} \label{B1}
\begin{aligned}
&\sup_{V \in \mathbb H^1} \E\Big[ \1_{ \{ M(\ph(1)+V(1)) \leq \NN L  \}}\frac{\beta}{pL^\gamma} \int_{-L^\theta/2}^{L^\theta/2} |\ph(1) + V(1)|^p \\
&\phantom{\inf_{V \in \mathbb H^1_a} \E\Big[]}- \frac 12 \norm{V(s)}_{\dot H^1}^2 ds - \frac\alpha2  \norm{ V(s)}_{L^2}^2 ds\\
&\phantom{\inf_{V \in \mathbb H^1_a} \E\Big[]}+L\1_{\{M(\ph(1)+V(1)) \le (\NN-\eta) L\}} \Big] \to +\infty
\end{aligned}
\end{equation}
as $L \to \infty$.
We follow closely the proof of Lemma \ref{lemma:small_scale1}, define $\ph_\eps$ exactly as in \eqref{phi_eps}, and perform the change of variable $V = -\ph_\eps + W$. Going through the same computations as in Lemma \ref{lemma:small_scale1}, we obtain the analogous of \eqref{A1.1} 
\begin{align*}
 \eqref{B1}
\le &~ \E\left[C_\eps \frac{\beta}{pL^\gamma} \int |(\ph-\ph_\eps)(1)|^p + C_\eps \left(\frac 12  \norm{\ph_\eps}_{\dot H^1}^2 + \frac \alpha 2 \norm{ \ph_\eps}_{L^2}^2\right) \right] \\
& -(1-\eps) \inf_{W\in H^1} -  \1_{ \{ \norm{W}_{L^2}^2 \le L\sigma \}}\bigg(  \frac{(1+3\eps)\beta}{pL^\gamma} \int |W|^p \\
& \phantom{+ \inf_{W\in H^1}}+ \big(\frac 12  \norm{W}_{\dot H^1}^2 + \frac \alpha 2 \norm{ W}_{L^2}^2\big)\bigg)
 + L\1_{\left\{\|W\|_{L^2}^2 \le \left(\sqrt{\NN-\eta} - \sqrt{\frac{1+\eps}{2\sqrt\alpha}}\right)^2 L\right\}}
\end{align*}
Proceeding as in \eqref{A2}, we obtain 
\begin{equation}\label{B2}
\begin{aligned}
&~ \E\left[C_\eps \frac{\beta}{pL^\gamma} \int |(\ph-\ph_\eps)(1)|^p  + C_\eps \left(\frac 12  \norm{\ph_\eps}_{\dot H^1}^2 + \frac \alpha 2 \norm{ \ph_\eps}_{L^2}^2\right) \right] \\
\les &~L^{1 - \gamma} + \exp(-c \eps L^\frac 12) \eps^{-24} L^{13} \les L^{1-\gamma},
\end{aligned}
\end{equation}
and from \eqref{A3}, we have that 
\begin{equation} \label{B3}
\begin{aligned}
& \inf_{W\in H^1} - \1_{ \{ \norm{W}_{L^2}^2 \le L\sigma \}} (1+3\eps) \frac{\beta}{pL^\gamma} \int |W|^p \\
& \phantom{ \inf_{W\in H^1}}+ \big(\frac 12  \norm{W}_{\dot H^1}^2 + \frac \alpha 2 \norm{ W}_{L^2}^2\big)
 + L\1_{\left\{\|W\|_{L^2}^2 \le \left(\sqrt{\NN-\eta} - \sqrt{\frac{1+\eps}{2\sqrt\alpha}}\right)^2 L\right\}}\\
 \ge& \inf_{W\in H^1} \eps\norm{W}_{\dot H^1}^2 + \delta(\eps)  \norm{ W}_{L^2}^2 + L\1_{\left\{\|W\|_{L^2}^2 \le \left(\sqrt{\NN-\eta} - \sqrt{\frac{1+\eps}{2\sqrt\alpha}}\right)^2 L\right\}}
\gtrsim L,
\end{aligned}
\end{equation}
where 
$$\delta(\eps)  > 0$$
for $\eps$ small enough and $L$ big enough. Therefore, there exist two constants $C_1, C_2$ such that 
\begin{align*}
\eqref{B1} \le C_1 L^{1- \gamma} - C_2 L \to -\infty
\end{align*}
as $L \to \infty$.
\end{proof}

The following Lemma relies on the previous Lemma~\ref{lemma: theta_removal}. Here we show a one-sided inequality when in the mass cut-off we can replace $M$ by $M_\theta$, at the cost of changing the constraint by a small multiple of $L$.

\begin{lemma} \label{lemma: theta_removal2}
Let $\gamma \ge \frac p2-1$, let $0<\theta<\gamma$, and let $\NN > \frac 1 {2\sqrt\alpha}$. Let $M_\theta$ be as in \eqref{Ntheta}, let $\beta < \beta_0$ (defined in Lemma \ref{lemma:small_scale1}), and let $\eta < \NN - \frac 1 {2\sqrt\alpha}$. Then, for every $F:\D'(K) \to \R$ Borel and bounded, we have  
\begin{equation}
\begin{aligned}
&~\E\Big[ \exp\Big(F(\ph) + \frac \beta {p L^\gamma} \int |\ph|^p \1_{[-L^\theta/2,L^\theta/2]^c}\Big) \1_{\{M(\ph) \le \NN L\}} \Big] \\
\ge&~\E\Big[ \exp\Big(F(\ph) +  \frac \beta {p L^\gamma} \int |\ph|^p \1_{[-L^\theta/2,L^\theta/2]^c}\Big)\1_{\{M_\theta(\ph) \le (\NN-\eta) L\}}\Big] + o(1)
\end{aligned}
\end{equation}
\end{lemma}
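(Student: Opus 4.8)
We may assume $\theta<1$, for if $\theta\ge1$ then $L^\theta\ge L$ and $[-L^\theta/2,L^\theta/2]^c\cap[-L/2,L/2]=\emptyset$ on $L\T$, so that both $M_\theta(\ph)$ and $\int|\ph|^p\1_{[-L^\theta/2,L^\theta/2]^c}$ vanish identically and the claimed inequality reduces to $\E[e^{-F(\ph)}\1_{\{M(\ph)\le\NN L\}}]\ge\E[e^{-F(\ph)}]+o(1)$, which follows from boundedness of $F$ together with $\P(\{M(\ph)>\NN L\})\to0$ (valid since $\NN>\frac1{2\sqrt\alpha}$). The plan for $\theta<1$ is to split the mass cutoff on the right-hand side according to the $L^2$-mass of $\ph$ on the window $[-L^\theta/2,L^\theta/2]$. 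Using $M(\ph)=M_\theta(\ph)+\int_{-L^\theta/2}^{L^\theta/2}|\ph|^2$, I would write $\1_{\{M_\theta(\ph)\le(\NN-\eta)L\}}$ as the sum of $\1_{\{M_\theta(\ph)\le(\NN-\eta)L\}}\1_{\{\int_{-L^\theta/2}^{L^\theta/2}|\ph|^2\le\eta L\}}$ and $\1_{\{M_\theta(\ph)\le(\NN-\eta)L\}}\1_{\{\int_{-L^\theta/2}^{L^\theta/2}|\ph|^2>\eta L\}}$, so that the right-hand side of the lemma becomes a \emph{main term} (carrying the first factor) plus an \emph{error term} (carrying the second).

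For the main term, on the event $\{M_\theta(\ph)\le(\NN-\eta)L\}\cap\{\int_{-L^\theta/2}^{L^\theta/2}|\ph|^2\le\eta L\}$ one has $M(\ph)\le(\NN-\eta)L+\eta L=\NN L$, so the product of the two indicators is pointwise at most $\1_{\{M(\ph)\le\NN L\}}$; since the prefactor $\exp(-F(\ph)+\frac\beta{pL^\gamma}\int|\ph|^p\1_{[-L^\theta/2,L^\theta/2]^c})$ is nonnegative, the main term is bounded above by $\E[\exp(-F(\ph)+\frac\beta{pL^\gamma}\int|\ph|^p\1_{[-L^\theta/2,L^\theta/2]^c})\1_{\{M(\ph)\le\NN L\}}]$, which is precisely the left-hand side of the lemma. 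Thus it only remains to show that the error term is $o(1)$.

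For the error term, I would first bound $\exp(-F(\ph))\le e^{\norm{F}_{L^\infty}}$ and then apply H\"older's inequality with exponents $r$ and $r'=\frac r{r-1}$, where $r>1$ is chosen close enough to $1$ that $r\beta<\beta_0$ in the critical case $\gamma=\frac p2-1$ (and $r>1$ arbitrary otherwise), obtaining the bound
\begin{equation*}
e^{\norm{F}_{L^\infty}}\,\E\Big[\exp\Big(\frac{r\beta}{pL^\gamma}\int|\ph|^p\1_{[-L^\theta/2,L^\theta/2]^c}\Big)\1_{\{M_\theta(\ph)\le(\NN-\eta)L\}}\Big]^{1/r}\;\P\Big(\int_{-L^\theta/2}^{L^\theta/2}|\ph|^2>\eta L\Big)^{1/r'}.
\end{equation*}
The first expectation is $\exp(O(L^{1-\gamma}))$ by the argument of Lemma \ref{lemma: theta_removal}, whose proof uses the mass cutoff only in order to apply the Gagliardo--Nirenberg--Sobolev inequality \eqref{GNStorus2} on the complement $[-L^\theta/2,L^\theta/2]^c$ after the Bou\'e--Dupuis change of variables, and hence is unaffected by replacing $\1_{\{M(\ph)\le\NN L\}}$ with $\1_{\{M_\theta(\ph)\le(\NN-\eta)L\}}$ and $\beta$ with $r\beta<\beta_0$. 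For the probability, I would realize $\int_{-L^\theta/2}^{L^\theta/2}|\ph|^2$ via \eqref{random_fourier} as a quadratic form in the i.i.d.\ Gaussians $(g_k)$ whose matrix is nonnegative, of spectral radius $\le\alpha^{-1}$, and of trace $\E[\int_{-L^\theta/2}^{L^\theta/2}|\ph|^2]=\frac{L^\theta}{2\sqrt\alpha}(1+o(1))$; hence $\log\E[\exp(\lambda\int_{-L^\theta/2}^{L^\theta/2}|\ph|^2)]\les_\lambda L^\theta$ for $\lambda<\frac\alpha2$, and Chernoff's inequality yields $\P(\int_{-L^\theta/2}^{L^\theta/2}|\ph|^2>\eta L)\le\exp(-\lambda\eta L+O(L^\theta))$. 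Combining, the error term is $\le\exp(O(L^{1-\gamma})-\frac{\lambda\eta}{r'}L+O(L^\theta))$, which tends to $0$ because $\gamma>0$ and $\theta<1$.

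The main obstacle is the error-term estimate: the factor $\exp(\frac\beta{pL^\gamma}\int|\ph|^p\1_{[-L^\theta/2,L^\theta/2]^c})$ is only controlled by $\exp(O(L^{1-\gamma}))$, so one needs the full large-deviation gain $e^{-c\eta L}$ from the atypical event $\{\int_{-L^\theta/2}^{L^\theta/2}|\ph|^2>\eta L\}$ to beat it. This is what forces $\theta<1$ (so that both $L^{1-\gamma}$ and $L^\theta$ are $o(L)$) and, in the critical case, a quantitative use of $\beta<\beta_0$ so that the H\"older exponent $r$ can be taken strictly above $1$ while keeping $r\beta<\beta_0$ in the Lemma \ref{lemma: theta_removal}-type bound.
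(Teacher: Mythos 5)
Your proposal is correct and follows essentially the same route as the paper: both split off the atypical event $\{\int_{-L^\theta/2}^{L^\theta/2}|\ph|^2>\eta L\}$, observe that on its complement $\1_{\{M_\theta(\ph)\le(\NN-\eta)L\}}\le\1_{\{M(\ph)\le\NN L\}}$ (so the main term is absorbed by the left-hand side), and control the error term via H\"older with an exponent $q>1$ chosen so that $q\beta<\beta_0$, applying the Lemma~\ref{lemma: theta_removal}-type bound to the exponential factor and a Gaussian large-deviation estimate to the probability factor. The only differences are cosmetic: you treat $\theta\ge1$ explicitly and supply a slightly sharper Chernoff bound $\exp(-c\eta L)$ for $\P(\int_{-L^\theta/2}^{L^\theta/2}|\ph|^2>\eta L)$, while the paper cites the cruder bound $\exp(-c\eta L^{1-\theta})$ and uses $\theta<\gamma$ to close the argument; either suffices.
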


\begin{proof}
We have that 
$$\1_{\{M(\ph) \le \NN L\}} \ge \1_{\{M_\theta(\ph) \le (\NN-\eta) L\}} - \1_{\left\{\int_{-L^\theta/2}^{L^\theta/2} |\ph|^2 > \eta L\right\}} \1_{\{M_\theta(\ph) \le (\NN-\eta) L\}}.$$
Therefore, we just need to prove that 
$$\E\Big[ \exp\Big( F(\ph) +  \frac \beta {p L^\gamma} \int |\ph|^p \1_{[-L^\theta/2,L^\theta/2]^c}\Big) \1_{\left\{\int_{-L^\theta/2}^{L^\theta/2} |\ph|^2 > \eta L\right\}} \1_{\{M_\theta(\ph) \le (\NN-\eta) L\}}\Big] = o(1).$$
By H\"older, for every $1<q<+\infty$, we have that 
\begin{align*}
&~\E\Big[ \exp\Big(F(\ph) +  \frac \beta {p L^\gamma} \int |\ph|^p \1_{[-L^\theta/2,L^\theta/2]^c}\Big) \1_{\left\{\int_{-L^\theta/2}^{L^\theta/2} |\ph|^2 > \eta L\right\}} \1_{\{M_\theta(\ph) \le (\NN-\eta) L\}}\Big] \\
\les&~\E\Big[ \exp\Big( \frac {q\beta} {p L^\gamma} \int |\ph|^p \1_{[-L^\theta/2,L^\theta/2]^c}\Big) \1_{\{M_\theta(\ph) \le (\NN-\eta) L\}}\Big]^{\frac 1 q}\\
&\times \P\left({\left\{\int_{-L^\theta/2}^{L^\theta/2} |\ph|^2 > \eta L\right\}}\right)^{1-\frac1q}.  
\end{align*}
By Lemma \ref{lemma: theta_removal}, we have that, as long as $q\beta < \beta_0$, 
$$\E\Big[ \frac {q\beta} {p L^\gamma} \int |\ph|^p \1_{[-L^\theta/2,L^\theta/2]^c}\Big) \1_{\{M_\theta(\ph) \le (\NN-\eta) L\}}\Big]^{\frac 1 q} \le \exp\Big(\frac Cq L^{1-\gamma}\Big).$$
Moreover, we have the large deviation estimate
$$
\P\left({\left\{\int_{-L^\theta/2}^{L^\theta/2} |\ph|^2 > \eta L\right\}}\right)\le \exp\Big(-c \frac{\eta  L}{L^\theta}\Big). 
 $$
Hence we have 
\begin{align*}
&~\E\Big[ \exp\Big(F(\ph) +  \frac \beta {p L^\gamma} \int |\ph|^p \1_{[-L^\theta/2,L^\theta/2]^c}\Big) \1_{\left\{\int_{-L^\theta/2}^{L^\theta/2} |\ph|^2 > \eta L\right\}} \1_{\{M_\theta(\ph) \le (\NN-\eta) L\}}\Big] \\
\les&~ \exp\Big(\frac Cq L^{1-\gamma}\Big) \exp\Big(-c\left(1-\frac 1 q\right)  \frac{\eta  L}{L^\theta}\Big) = o(1)
\end{align*}
as $L \to \infty$.
\end{proof}

\begin{proof}[Proof of Theorem \ref{thm:3}]

We first show that for every $F \in \D'(K)$, Borel and bounded, we have 
\begin{equation} \label{eqn: truncated_limit2}
\begin{aligned}
&\lim_{L \to \infty} \frac{ \E\Big[ \exp\Big(-F(\ph) + \frac \beta {p L^\gamma} \int |\ph|^p \Big) \1_{\{M(\ph) \le \NN L\}}\Big]}
{\E\Big[ \exp\Big( \frac \beta {p L^\gamma} \int |\ph|^p \Big) \1_{\{M(\ph) \le \NN L\}}\Big]}\\
=& \lim_{L\to \infty} \E\Big[ \exp(-F(\ph))\Big].
\end{aligned}
\end{equation}
The case $\gamma > 1$ follows immediately from \eqref{eqn:small_scale} with $\P(\{M(\ph) \le \NN L \}) \to 1$, by choosing $\ta = 1$. Therefore, we can assume $\gamma \le 1$.
By Lemma \ref{lemma: cutoff_removal}, we have that 
\begin{align}
&\frac{ \E\Big[ \exp\Big(F(\ph) + \frac \beta {p L^\gamma} \int |\ph|^p \Big) \1_{\{M(\ph) \le \NN L\}}\Big]}
{\E\Big[ \exp\Big( \frac \beta {p L^\gamma} \int |\ph|^p \Big) \1_{\{M(\ph) \le \NN L\}}\Big]} \notag \\
= &~\frac{ \E\Big[ \exp\Big(F(\ph) + \frac \beta {p L^\gamma} \int |\ph|^p \1_{[-L^\theta/2,L^\theta/2]^c}\Big) \1_{\{M(\ph) \le \NN L\}}\Big]}
{\E\Big[ \exp\Big( \frac \beta {p L^\gamma} \int |\ph|^p \1_{[-L^\theta/2,L^\theta/2]^c}\Big) \1_{\{M(\ph) \le \NN L\}}\Big]} (1+o(1)) \label{eqn: theta_added}
\end{align}
By Lemmas \ref{lemma: eps_removal} and  \ref{lemma: theta_removal2}, we have 
\begin{align*}
&~\frac{ \E\Big[ \exp\Big(F(\ph) + \frac \beta {p L^\gamma} \int |\ph|^p \1_{[-L^\theta/2,L^\theta/2]^c}\Big) \1_{\{M_\theta(\ph) \le \NN L\}}\Big]}
{\E\Big[ \exp\Big( \frac \beta {p L^\gamma} \int |\ph|^p \1_{[-L^\theta/2,L^\theta/2]^c}\Big) \1_{\{M_\theta(\ph) \le \NN L\}}\Big]} \\
\ge&~\frac{ \E\Big[ \exp\Big(F(\ph) + \frac \beta {p L^\gamma} \int |\ph|^p \1_{[-L^\theta/2,L^\theta/2]^c}\Big) \1_{\{M_\theta(\ph) \le \NN L\}}\Big]}
{\E\Big[ \exp\Big( \frac \beta {p L^\gamma} \int |\ph|^p \1_{[-L^\theta/2,L^\theta/2]^c}\Big) \1_{\{M(\ph) \le (\NN+\eta) L\}}\Big]}(1+o(1))\\
\ge &~\frac{ \E\Big[ \exp\Big(F(\ph) + \frac \beta {p L^\gamma} \int |\ph|^p \1_{[-L^\theta/2,L^\theta/2]^c}\Big) \1_{\{M_\theta(\ph) \le \NN L\}}\Big]}
{\E\Big[ \exp\Big( \frac \beta {p L^\gamma} \int |\ph|^p \1_{[-L^\theta/2,L^\theta/2]^c}\Big) \1_{\{M(\ph) \le \NN L\}}\Big]}(1+o(1))\\
\ge ~&\eqref{eqn: theta_added}(1+o(1)) \\
\ge &~\frac{ \E\Big[ \exp\Big(F(\ph) + \frac \beta {p L^\gamma} \int |\ph|^p \1_{[-L^\theta/2,L^\theta/2]^c}\Big) \1_{\{M(\ph) \le (\NN+\eta) L\}}\Big]}
{\E\Big[ \exp\Big( \frac \beta {p L^\gamma} \int |\ph|^p \1_{[-L^\theta/2,L^\theta/2]^c}\Big) \1_{\{M(\ph) \le \NN L\}}\Big]} \\
\ge &~\frac{ \E\Big[ \exp\Big(F(\ph) + \frac \beta {p L^\gamma} \int |\ph|^p \1_{[-L^\theta/2,L^\theta/2]^c}\Big) \1_{\{M_\theta(\ph) \le \NN L\}}\Big]}
{\E\Big[ \exp\Big( \frac \beta {p L^\gamma} \int |\ph|^p \1_{[-L^\theta/2,L^\theta/2]^c}\Big) \1_{\{M_\theta(\ph) \le \NN L\}}\Big]} (1+o(1)).
\end{align*}
Therefore, \eqref{eqn: truncated_limit2} is equivalent to
\begin{equation*}
\begin{aligned}
&~\lim_{L\to \infty}\frac{ \E\Big[ \exp\Big(-F(\ph) + \frac \beta {p L^\gamma} \int |\ph|^p \1_{[-L^\theta/2,L^\theta/2]^c}\Big) \1_{\{M_\theta(\ph) \le (\NN L\}}\Big]}
{\E\Big[ \exp\Big( \frac \beta {p L^\gamma} \int |\ph|^p \1_{[-L^\theta/2,L^\theta/2]^c}\Big) \1_{\{M_\theta(\ph) \le \NN L\}}\Big]}\\
=& \lim_{L\to \infty} \E\Big[ \exp(-F(\ph))\Big],
\end{aligned}
\end{equation*}
which is \eqref{eqn: truncated_limit}.

We now move to showing the weak convergence of $\rho_L \to \mu_{OU}$ with respect to the compact-open topology of $C(\R)$. 
Recall that the sets $\{ u \in C(\R) : \norm{u}_{C^\frac14([-n,n])} \le C_n\, \forall n \in \N\}$ are compact in $C(\R)$, for every choice of the sequence $C_n$. Let $n\in \N$, $M>0$. let $F_n^M(\ph) = \min(\norm{\ph}_{C^\frac14([-n,n])},M)$, and let $F_n(\ph) = \norm{\ph}_{C^\frac14([-n,n])}$. By \eqref{eqn: truncated_limit}, we have that 
\begin{align*}
 \lim_{L \to \infty} \int \exp(F_n^M(\ph)) d\rho_L(\ph) &= \E_{OU}[\exp(F_n^M(\ph))] \\
 & \le \E_{OU}[\exp(F_n(\ph))] =: K_n < +\infty,
\end{align*}
where $\E_{OU}$ is the expectation of a functional with respect to the Ornstein-Uhlenbeck measure. Therefore, for every $N>0$, 
\begin{align*}
\limsup_{L \to \infty} \rho_L(\{\norm{\ph}_{C^\frac14([-n,n])} > N\}) &\le \limsup \frac{1}{\exp(N)} \int \exp(F_n^N(\ph)) d\rho_L(\ph) \\
&\le K_n \exp(-N) \to 0
\end{align*}
as $N \to \infty$. Therefore, for every $\eps > 0$ and for every $n\in\N$, there exists $C_n = C_n(\eps)$ big enough such that 
$$\rho_L(\{\norm{\ph}_{C^\frac14([-n,n])} > C_n\}) \le 2^{-n-1}\eps. $$
Therefore, 
$$\rho_L(\{ u \in C(\R) : \norm{u}_{C^\frac14([-n,n])} \le C_n\, \forall n \in \N\}) \ge 1 - \eps,$$
so the family $\rho_L$ is tight in $C(\R)$. Therefore, by Skorohod's theorem, up to subsequences, $\rho_L$ has a weak limit as $L \to \infty$. Hence, let $\rho$ be any weak limit of $\rho_L$ as $L \to \infty$. To conclude the proof, we just need to prove that $\rho$ corresponds to the Ornstein-Uhlenbeck measure.
By \eqref{eqn: truncated_limit}, $\rho$ must satisfy 
$$\int \exp(F) d\rho = \E_{OU}[\exp(F)] $$
for every $F:\D'(K) \to \R$ Borel and bounded. 
For $G:\D'(K) \to \R,$ we can write 
$$G = \exp( \log( G + \norm{G}_{L^\infty} + 1)) - \exp( \log(\norm{G}_{L^\infty} + 1)).$$
Therefore, we have that 
\begin{equation} \label{rho=OU}
\int G d\rho = \E_{OU}[G] 
\end{equation}
for every $G \in \D'(K)$. 
Given $x_1,\dotsc,x_d \in \R$, and $F \subseteq \R^d$ Borel, denote 
$$E_{(x_1,\dots,x_d),F} = \{ \ph |( \ph(x_1),\dotsc,\ph(x_d)) \in F\}. $$
By \eqref{rho=OU}, we have that $\rho(E_{(x_1,\dots,x_d),F}) = \E_{OU}[\1_{E_{(x_1,\dots,x_d),F}}]$. Since the family of sets $\{E_{(x_1,\dots,x_d),F}\}$
is a Dynkin system for the Borel sets of $C(\R)$, we deduce that $\rho$ must be equal to the Ornstein-Uhlenbeck measure. 
\end{proof}

\appendix

\section{The Buo\'e-Dupuis formula in infinite dimension}

We now recall the Bou\'e-Dupuis formula from~\cite{BD, Ust}. The particular version we are going to use is an immediate corollary of \cite[Theorem 3.2]{Zhang}. 

\begin{proposition}[Bou\'e-Dupuis variational formula]\label{LEM:BD}
	Let $(W,H,\mu)$ be an abstract Wiener space, and let $(\ph(t), t \in [0,1])$ be a cylindrical Brownian motion on $H$ with $\ph \in C([0,1], W)$. Let $F:W \to \R$
	be a bounded measurable function. Then 
	\begin{align}
	\log \E\Big[e^{F( \ph(1))}\Big]
	= \sup_{\theta \in \mathbb H_a}
	\E\bigg[ F(\ph(1) + I(\theta)(1)) - \frac{1}{2} \int_0^1 \| \theta(t) \|_{H}^2 dt \bigg], 
	\label{P3}
	\end{align}	
\noindent
where $\mathbb H_a$ denotes the  set of $H$ valued stochastic processes $\theta$ for which 
$$\E\bigg[\int_0^1 \| \theta(t) \|_{H}^2\bigg] < \infty$$ 
and which are  progressively measurable w.r.t.\ the filtration induced by $\ph$. Furthermore, 
$I(\theta)$ is  defined by 
\begin{align*}
I(\theta)(t) = \int_0^t \theta(t') dt'.
\end{align*}
\end{proposition}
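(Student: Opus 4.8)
The plan is to obtain \eqref{P3} as a direct transcription of \cite[Theorem 3.2]{Zhang}, which establishes the Bou\'e--Dupuis variational representation for bounded measurable functionals on an abstract Wiener space; the only work is to line up the notation and to check that the (mild) hypotheses hold. Concretely, I would realise the cylindrical Brownian motion $\ph$ on $H$ as the coordinate process on the classical path space $\big(C([0,1];W),\,\mathbb P\big)$, whose Cameron--Martin space is $L^2([0,1];H)$ embedded via $\theta\mapsto I(\theta)=\int_0^\cdot\theta(t)\,dt$, with squared norm $\int_0^1\|\theta(t)\|_H^2\,dt$. Under this identification $\Law(\ph(1))=\mu$, the admissible drifts in Zhang's theorem are exactly the elements of $\mathbb H_a$ (progressively measurable with $\E\int_0^1\|\theta(t)\|_H^2\,dt<\infty$), the drift term is $I(\theta)(1)$, and the entropic cost is $\tfrac12\int_0^1\|\theta(t)\|_H^2\,dt$. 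Thus \cite[Theorem 3.2]{Zhang}, read for the functional $F$ (or, depending on the sign convention there, for $-F$, which turns an infimum representation into the supremum in \eqref{P3}), is literally the assertion \eqref{P3}.

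It then remains to check the hypotheses. Since $F$ is bounded and measurable, $e^{F}$ is bounded above and below by strictly positive constants, so $\E\big[e^{F(\ph(1))}\big]\in(0,\infty)$ and the left-hand side of \eqref{P3} is a well-defined real number lying in $[\inf F,\sup F]$. On the right-hand side, for any $\theta\in\mathbb H_a$ the random variable $\omega\mapsto F\big(\ph(1)(\omega)+I(\theta)(1)(\omega)\big)$ is measurable and bounded, while $\E\int_0^1\|\theta(t)\|_H^2\,dt<\infty$ by assumption, so the expectation inside the supremum is a finite real number; moreover $\theta\equiv 0$ is admissible, so the supremum is over a nonempty set. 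Hence both sides are finite and the equality established in \cite{Zhang} transfers with no integrability caveats.

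I do not expect a genuine obstacle here: the mathematical content is contained in \cite{BD, Ust, Zhang}, and the task is purely one of translation. The one point requiring a word of care is that the cylindrical Brownian motion $\ph(t)$ is not $H$-valued — it lives only in the larger space $W$ — so one must work throughout on $W$ (equivalently on $C([0,1];W)$) and use only that the \emph{drifts} $I(\theta)$ take values in $H$ almost surely; this is exactly the framework in which \cite[Theorem 3.2]{Zhang} is stated, so no additional argument is needed. Accordingly, in Appendix~A I would simply record Proposition~\ref{LEM:BD} together with the remark that it is the stated corollary of \cite{Zhang}, and refer the reader there for the proof.
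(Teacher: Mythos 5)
Your proposal matches the paper exactly: the paper does not prove Proposition~\ref{LEM:BD} but simply records it as an immediate corollary of \cite[Theorem 3.2]{Zhang}, citing \cite{BD, Ust} for the original formula, and your notational translation and sanity checks (boundedness of $F$, finiteness of both sides, admissibility of $\theta\equiv 0$, and the observation that $\ph$ lives in $W$ rather than $H$) are consistent with what one would check in invoking Zhang's theorem. Nothing further is required.
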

Throughout the paper, we will make use of the following. 
\begin{proposition} \label{prop: BDinf}
	Let $(W,H,\mu)$ be an abstract Wiener space, and let $(\ph(t), t \in [0,1])$ be a cylindrical Brownian motion on $H$ with $\ph \in C([0,1], W)$.  Let $F:W \to \R$
	be a measurable function bounded from below. Suppose that 
	$$\sup_{V \in \mathbb H^1} \E\bigg[ F(\ph(1) + V(1)) - \frac{1}{2} \int_0^1 \| \dot V(t) \|_{H}^2 dt \bigg] < \infty,$$
where $\mathbb H^1$ denotes the space of $H$-valued stochastic processes $V$ such that $V(0) = 0$ and 
$$\E\bigg[ \int_0^1 \| \dot V(t) \|_{H}^2 dt \bigg] < \infty. $$
Then 
\begin{equation}
\begin {aligned}
	\log \E\Big[e^{F( \ph(1))}\Big]
	&= \sup_{V \in \H}
	\E\bigg[ F(\ph(1) + V(1)) - \frac{1}{2} \int_0^1 \| \dot V(t) \|_{H}^2 dt \bigg]  \\
	&\le \sup_{V \in \mathbb H^1}
	\E\bigg[ F(\ph(1) + V(1)) - \frac{1}{2} \int_0^1 \| \dot V(t) \|_{H}^2 dt \bigg], 
	\label{BD}
\end{aligned}
\end{equation}
where $\H$ denotes the space of processes in $\mathbb H^1$ which are progressively measurable.
\end{proposition}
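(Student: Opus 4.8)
The plan is to reduce the assertion to the bounded case of Proposition~\ref{LEM:BD} by truncating $F$ from above, and then to remove the truncation using the monotone convergence theorem on both sides of the resulting identity simultaneously.

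For $M>0$ I would set $F_M := \min(F,M)$. Since $F$ is measurable and bounded below, $\inf F > -\infty$ and $F_M$ is a bounded measurable function with $\inf F \le F_M \le M$; Proposition~\ref{LEM:BD}, rewritten in the position variable $V = I(\theta)$ (so that $\dot V = \theta$ and $\|\dot V(t)\|_H = \|\theta(t)\|_H$), therefore gives, for every $M$,
\begin{equation*}
\log \E\big[e^{F_M(\ph(1))}\big] = \sup_{V \in \H} \E\bigg[ F_M(\ph(1) + V(1)) - \frac12 \int_0^1 \|\dot V(t)\|_H^2\, dt \bigg] =: J_M .
\end{equation*}
On the left, $F_M \uparrow F$ pointwise, so $e^{F_M(\ph(1))} \uparrow e^{F(\ph(1))}$, and monotone convergence gives $\E[e^{F_M(\ph(1))}] \uparrow \E[e^{F(\ph(1))}]$, hence $\log\E[e^{F_M(\ph(1))}] \uparrow \log\E[e^{F(\ph(1))}] \in (-\infty,+\infty]$.

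On the right I would show $J_M \to J := \sup_{V \in \H} \E[ F(\ph(1)+V(1)) - \frac12\int_0^1\|\dot V(t)\|_H^2\, dt ]$. The sequence $(J_M)$ is nondecreasing because $F_M$ is, and $F_M \le F$ gives $J_M \le J$ for all $M$; moreover $J$ is finite, being $\ge \inf F$ (take $V=0$) and $\le \sup_{V\in\mathbb H^1}\E[\cdots]<\infty$ by hypothesis. For the reverse inequality, fix $V\in\H$: since $\E\int_0^1\|\dot V(t)\|_H^2\, dt<\infty$ and $F_M(\ph(1)+V(1))\uparrow F(\ph(1)+V(1))$ with the integrable lower bound $\inf F$, monotone convergence yields $\E[F_M(\ph(1)+V(1)) - \frac12\int_0^1\|\dot V(t)\|_H^2\, dt] \to \E[F(\ph(1)+V(1)) - \frac12\int_0^1\|\dot V(t)\|_H^2\, dt]$, so $\lim_M J_M \ge J$. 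Thus $J_M \to J$.

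Matching the two limits gives $\log\E[e^{F(\ph(1))}] = J = \sup_{V\in\H}\E[F(\ph(1)+V(1)) - \frac12\int_0^1\|\dot V(t)\|_H^2\, dt]$, and in particular $\E[e^{F(\ph(1))}]<\infty$; the final inequality in \eqref{BD} is then immediate from $\H\subseteq\mathbb H^1$. I do not expect a real obstacle: the only delicate point is the interchange of the supremum over $V$ with the monotone limit in $M$ on the right-hand side, which is handled above by the trivial monotonicity $J_M\le J$ together with the monotone convergence theorem applied for each fixed $V$ — and it is precisely here that the hypothesis that $F$ is bounded below is used, to make all the expectations in question well defined.
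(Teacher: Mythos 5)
Your proof is correct and takes essentially the same route as the paper's: truncate $F$ from above to $F_M = F\wedge M$, apply the bounded Bou\'e--Dupuis formula of Proposition~\ref{LEM:BD}, and remove the truncation by monotone convergence using that $F$ is bounded below. The only cosmetic difference is in removing the truncation on the variational side: the paper fixes an $\eps$-almost optimal $V_\eps\in\H$ and then chooses $M$ large depending on $V_\eps$, whereas you show $J_M\uparrow J$ directly by applying monotone convergence for each fixed $V$ and then taking the supremum; both arguments are valid.
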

\begin{proof}
Notice that the last $\le$ in \eqref{BD} follows directly from the fact that $\mathbb H^1 \supseteq \H$, so we just need to show the first equality.
For $M >0$, let $F_M = F \wedge M$. Then, by \eqref{P3}, by the change of variables $V = I(\theta)$, we have 
\begin{align*}
\log \E\Big[e^{F_M( \ph(1))}\Big] &= 
\sup_{\theta \in \mathbb H_a} \E\bigg[ F_M(\ph(1) + I(\theta)(1)) - \frac{1}{2} \int_0^1 \| \theta(t) \|_{H}^2 dt \bigg] \\
&= \sup_{V \in \H} \E\bigg[ F_M(\ph(1) + V(1)) - \frac{1}{2} \int_0^1 \| \dot V(t) \|_{H}^2 dt \bigg] \\
&\le \sup_{V \in \H} \E\bigg[ F(\ph(1) + V(1)) - \frac{1}{2} \int_0^1 \| \dot V(t) \|_{H}^2 dt \bigg] \\
&\le \sup_{V \in \mathbb H^1} \E\bigg[ F(\ph(1) + V(1)) - \frac{1}{2} \int_0^1 \| \dot V(t) \|_{H}^2 dt \bigg].
\end{align*}
Therefore, taking the limit as $M\uparrow \infty$, by monotone convergence we obtain that
\begin {align} \label{BDpf1}
	\log \E\Big[e^{F( \ph(1))}\Big]
	&\le \sup_{V \in \H}
	\E\bigg[ F(\ph(1) + V(1)) - \frac{1}{2} \int_0^1 \| \dot V(t) \|_{H}^2 dt \bigg] < \infty.
\end{align}
In order to show the reverse inequality, fix $\eps > 0$, and let $V_\eps \in \H$ be such that 
\begin{align*}
&\E\bigg[ F(\ph(1) + V_\eps(1)) - \frac{1}{2} \int_0^1 \| \dot V_\eps(t) \|_{H}^2 dt \bigg]\\
\ge &\ - \eps + \sup_{V \in \H}
	\E\bigg[ F(\ph(1) + V(1)) - \frac{1}{2} \int_0^1 \| \dot V(t) \|_{H}^2 dt \bigg] 
\end{align*}
By \eqref{BDpf1}, $\E[ F(\ph(1) + V_\eps(1))] < \infty$. Therefore, by monotone convergence, there exists $M > 0$ such that 
$$\E[ F_M(\ph(1) + V_\eps(1))] \ge -\eps + \E[ F(\ph(1) + V_\eps(1))].$$ 
With this choice of $M$, by \eqref{P3} we have
\begin{align*}
&\sup_{V \in \H}
	\E\bigg[ F(\ph(1) + V(1)) - \frac{1}{2} \int_0^1 \| \dot V(t) \|_{H}^2 dt \bigg] \\
\le&\ \eps + \E\bigg[ F(\ph(1) + V_\eps(1)) - \frac{1}{2} \int_0^1 \| \dot V_\eps(t) \|_{H}^2 dt \bigg] \\
\le&\ 2\eps +  \E\bigg[ F_M(\ph(1) + V_\eps(1)) - \frac{1}{2} \int_0^1 \| \dot V_\eps(t) \|_{H}^2 dt \bigg] \\
\le&\ 2\eps +  \sup_{\theta \in \mathbb H_a}\E\bigg[ F_M(\ph(1) + I(\theta)(1)) - \frac{1}{2} \int_0^1 \| \theta(t) \|_{H}^2 dt \bigg] \\
=&\ 2\eps + \log \E[ e^{F_M(\ph(1))}]\\
\le&\ 2\eps + \log \E[ e^{F(\ph(1))}].
\end{align*}
By taking limits as $\eps \downarrow 0$, we obtain 
$$ \log \E[ e^{F(\ph(1))}] \ge \sup_{V \in \H}
	\E\bigg[ F(\ph(1) + V(1)) - \frac{1}{2} \int_0^1 \| \dot V(t) \|_{H}^2 dt \bigg]. $$
Together with \eqref{BDpf1}, this completes the proof.
\end{proof}
\section{Gaussian free field and Ornstein-Uhlenbeck measure}
First, we recall the following hypercontractivity property of Gaussian measure, which is an immediate corollary of \cite[Theorem I.22]{Simon74}. 
\begin{proposition}
Let $n\in \N$, and let $\nu$ be a Gaussian measure on $\R^n$. Let $p:\R^n \to \R$ be a polynomial of degree $d$. 
Let 
$$\sigma_p^2 := \int |p(u)|^2 d\nu(u).$$
Then there exists a constant $c_d$ depending only on $d$ such that 
\begin{equation}\label{hypercontractivity}
\int \exp\Big(c_d \big|\sigma_p^{-1}p(u)\big|^\frac 2 d\Big) d \nu(u) \le 2.
\end{equation}
\end{proposition}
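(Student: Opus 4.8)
The plan is to deduce \eqref{hypercontractivity} directly from Nelson's hypercontractive inequality. In the dimension-free form I would use it (see \cite[Theorem I.22]{Simon74}), it states that for any polynomial $p$ of degree at most $d$ on a Gaussian space $(\R^n,\nu)$ and any $q\ge 2$,
\begin{equation} \label{nelson}
\norm{p}_{L^q(\nu)} \le (q-1)^{\frac d2}\,\norm{p}_{L^2(\nu)},
\end{equation}
with a constant independent of $n$ — which is essential, since the $c_d$ in \eqref{hypercontractivity} must not depend on $n$. After rescaling $p$ by $\sigma_p^{-1}$ it suffices to bound $\int \exp\big(c_d|p|^{2/d}\big)\,d\nu$ under the normalisation $\norm{p}_{L^2(\nu)} = 1$.

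Next, since the integrand is nonnegative, I would expand the exponential and apply Tonelli to get $\int \exp\big(c_d|p|^{2/d}\big)\,d\nu = \sum_{k\ge 0}\frac{c_d^k}{k!}\int |p|^{2k/d}\,d\nu$, and estimate the $k$-th moment in two regimes. For $k\ge d$ the exponent $q:=2k/d$ is $\ge 2$, so \eqref{nelson} gives $\int |p|^{2k/d}\,d\nu = \norm{p}_{L^q(\nu)}^q \le (q-1)^k \le (2k/d)^k$. For $1\le k<d$ the exponent is below $2$ and \eqref{nelson} is vacuous, so instead I would use monotonicity of $L^r$ norms against the probability measure $\nu$ (Jensen) to get $\int |p|^{2k/d}\,d\nu \le 1$. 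Combining, $\int |p|^{2k/d}\,d\nu \le (2k/d+2)^k$ for every $k\ge1$, and since $k!\ge (k/e)^k$ the $k$-th term of the series is at most $(4e\,c_d)^k$.

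Finally, choosing $c_d$ small enough — any $c_d \le (8e)^{-1}$ works, though one is free to let it depend on $d$ — makes $\sum_{k\ge1}(4e\,c_d)^k \le 1$, and together with the $k=0$ term this gives $\int \exp\big(c_d|p|^{2/d}\big)\,d\nu \le 2$. I do not expect a real obstacle here: the only things requiring care are the low-degree range $k<d$, where hypercontractivity gives nothing and one falls back on Jensen, and the bookkeeping of the constant so that the exponential tail leaves room for the leading term — both routine.
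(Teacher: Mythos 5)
Your proof is correct and fills in exactly the "immediate corollary of [Simon, Theorem I.22]" step that the paper leaves implicit: you deduce the exponential integrability from the dimension-free hypercontractive moment bound $\norm{p}_{L^q}\le (q-1)^{d/2}\norm{p}_{L^2}$ by expanding the exponential, using Nelson for $q\ge 2$ and Jensen for the low-order terms, and Stirling to control the factorial. The bookkeeping giving $c_d\le (8e)^{-1}$ checks out, so this is the same route the paper intends, just written out.
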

Let $\mu_L$ the Gaussian free field defined in Section \ref{Sec:GaussianMeasure}.
\begin{proposition}
Let $\ph$ be distributed according to $\mu_L$. Then for every $x \in [-L/2,L/2]$, 
\begin{equation} \label{ph_pointwise}
\E[|\ph(x)|^2 ] = \frac 1 L \sum_{k \in \Z} \frac{1}{\alpha + 4\pi^2\big(\frac k L\big)^2} \to \int_{-\infty}^\infty \frac{dx}{\alpha + 4\pi^2 x^2} = \frac{1}{2\sqrt{\alpha}}.
\end{equation}
as $L \to \infty$. Moreover, for any $s < \frac 12$, we have that  
\begin{equation} \label{ph_Hs}
\E[\|\ph\|_{H^s}^2] = \sum_{k \in \Z} \frac{1}{\Big(\alpha + 4\pi^2\big(\frac k L\big)^2\Big)^{1-s}} \les L.
\end{equation}
\end{proposition}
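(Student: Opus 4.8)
The plan is to compute both quantities \emph{exactly} from the Fourier series representation \eqref{random_fourier} of $\mu_L$, and then extract the large-$L$ asymptotics by an elementary Riemann-sum comparison. Since $\alpha>0$, the series $\sum_{k\in\Z}(\alpha+4\pi^2(k/L)^2)^{-1}$ converges, so the random series in \eqref{random_fourier} converges in $L^2(\Omega)$ for each fixed $x$ (in fact $\mu_L$-a.s.\ to a continuous function), and pointwise evaluation of $\ph$ is meaningful. Using that the $g_k$ are i.i.d.\ standard complex Gaussians, so $\E[g_k\overline{g_j}]=\delta_{kj}$, and that $\bigl|e^{2\pi i kx/L}/\sqrt L\bigr|^2 = 1/L$, a term-by-term computation gives
\[
\E[|\ph(x)|^2] \;=\; \frac1L\sum_{k\in\Z}\frac{1}{\alpha + 4\pi^2(k/L)^2},
\]
which is the first identity in \eqref{ph_pointwise}, valid for every $L$ and every $x\in[-L/2,L/2]$.

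Next I would observe that the right-hand side is precisely the Riemann sum of $g(\xi):=(\alpha+4\pi^2\xi^2)^{-1}$ with uniform mesh $1/L$ and sample points $k/L$. As $g$ is even, positive, integrable and decreasing on $[0,\infty)$, comparing each term with the integral of $g$ over the adjacent interval of length $1/L$ gives
\[
\int_{0}^\infty g(\xi)\,d\xi \;\le\; \frac1L\sum_{k\ge 0} g(k/L) \;\le\; \frac{g(0)}{L} + \int_0^\infty g(\xi)\,d\xi ,
\]
together with the analogous bracketing on $(-\infty,0]$; letting $L\to\infty$ and using $g(0)/L\to0$ yields $\frac1L\sum_{k\in\Z}g(k/L)\to\int_{\R}g$. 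The substitution $\xi=\tfrac{\sqrt\alpha}{2\pi}t$ then gives $\int_{\R}g = \frac{1}{2\pi\sqrt\alpha}\int_{\R}\frac{dt}{1+t^2}=\frac{1}{2\sqrt\alpha}$, which completes \eqref{ph_pointwise}.

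For \eqref{ph_Hs} I would argue similarly, using Parseval's identity in the $H^s$ norm associated with $\alpha-\Delta$ (the one weighted by $\alpha+4\pi^2(k/L)^2$), together with the fact that, by \eqref{random_fourier} and the Fourier convention \eqref{ft}, $\hat\ph(k)=g_k\,(\alpha+4\pi^2(k/L)^2)^{-1/2}$, so $\E|\hat\ph(k)|^2 = (\alpha+4\pi^2(k/L)^2)^{-1}$ and
\[
\E[\norm{\ph}_{H^s}^2] \;=\; \sum_{k\in\Z}\bigl(\alpha+4\pi^2(k/L)^2\bigr)^{s}\,\E|\hat\ph(k)|^2 \;=\; \sum_{k\in\Z}\frac{1}{\bigl(\alpha+4\pi^2(k/L)^2\bigr)^{1-s}},
\]
which is the asserted identity. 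Setting $f(\xi):=(\alpha+4\pi^2\xi^2)^{s-1}$, again even and decreasing on $[0,\infty)$, the same monotone comparison gives $\sum_{k\in\Z}f(k/L)\le f(0) + L\int_{\R}f(\xi)\,d\xi$; since $f(\xi)\sim (2\pi|\xi|)^{2(s-1)}$ as $|\xi|\to\infty$ and $2(1-s)>1$ precisely because $s<\tfrac12$, the integral $\int_{\R}f$ is finite, whence $\E[\norm{\ph}_{H^s}^2]\les L$.

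The argument is entirely elementary and there is no genuine obstacle; the only points needing a little care are making the Riemann-sum limit rigorous via a uniform upper bound (rather than mere pointwise convergence), which is handled by monotonicity of the integrand on each half-line, and keeping the normalisation of the $H^s$ norm consistent with the inverse covariance $\alpha-\Delta$ of $\mu_L$ — this is exactly where the hypothesis $s<\tfrac12$ is used, through integrability of $f$ at infinity.
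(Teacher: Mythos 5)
Your proof is correct and matches the paper's argument, which also obtains the two equalities by a direct term-by-term computation from the Fourier expansion \eqref{random_fourier} and then appeals to a Riemann-sum comparison for the limit in \eqref{ph_pointwise} and the bound in \eqref{ph_Hs}. You have simply filled in the monotone bracketing that the paper leaves implicit, and correctly identified the $H^s$ normalisation (weighted by $\alpha+4\pi^2(k/L)^2$) that makes the stated formula hold and makes $s<\tfrac12$ exactly the integrability threshold.
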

\begin{proof}
Recall that according to \eqref{random_fourier} one has
$$\ph(x) = \sum_{k\in\Z} \frac{g_k}{\sqrt{\alpha + 4\pi^2\big(\frac{k}{L}\big)^2}} \frac{e^{i\frac{2\pi kx}{L}}}{\sqrt L}, $$
where $g_k$ are i.i.d.\ complex valued standard Gaussian random variables. The equalities in \eqref{ph_pointwise} and \eqref{ph_Hs} follow by direct computation. Similarly, by Riemann integral approximation, it is easy to see that for $s<\frac12$, 
$$ \lim_{L\to\infty} \frac 1 L\sum_{k \in \Z} \frac{1}{\Big(\alpha + 4\pi^2\big(\frac k L\big)^2\Big)^{1-s}} = \int_{-\infty}^\infty \frac{dx}{\Big(\alpha + 4\pi^2x^2\Big)^{1-s}} < \infty, $$
from which we immediately obtain both the convergence in \eqref{ph_pointwise} and the estimate in \eqref{ph_Hs}.
\end{proof}
\begin{corollary}
Let $s < \frac 12$, and let $0 < p < \infty$. Then  
\begin{equation}
\E[\|\ph\|_{H^s}^p] \les_{s,p} L^\frac p2. \label{ph_Hsp}
\end{equation}
\end{corollary}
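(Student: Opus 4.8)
The statement for $0<p\le 2$ is immediate from \eqref{ph_Hs}: by Jensen's inequality (equivalently, monotonicity of $L^q$-norms) $\E[\|\ph\|_{H^s}^p]\le \big(\E[\|\ph\|_{H^s}^2]\big)^{p/2}\les_s L^{p/2}$. So the plan is to treat $p>2$; write $q=p/2>1$, so it suffices to show $\E[\|\ph\|_{H^s}^{2q}]\les_{s,q} L^{q}$. The idea is to realise $\|\ph\|_{H^s}^2$ as (the increasing limit of) a degree-$2$ polynomial in i.i.d.\ Gaussians with variance $O(L)$, and then apply the hypercontractivity estimate \eqref{hypercontractivity} with $d=2$.

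Concretely, using \eqref{random_fourier} and the Fourier convention \eqref{ft} one has $\widehat\ph(k)=g_k\big(\alpha+4\pi^2(k/L)^2\big)^{-1/2}$, hence
$$\|\ph\|_{H^s}^2=\sum_{k\in\Z} c_k|g_k|^2,\qquad c_k:=\big(\alpha+4\pi^2(k/L)^2\big)^{s-1}.$$
For $N\ge 1$ let $\ph_N$ be the truncation keeping only $|k|\le N$, so that $P_N:=\|\ph_N\|_{H^s}^2-\E[\|\ph_N\|_{H^s}^2]=\sum_{|k|\le N}c_k(|g_k|^2-\E|g_k|^2)$ is a mean-zero polynomial of degree $2$ in the Gaussians $\{g_k\}_{|k|\le N}$. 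Since $|g_k|^2$ has variance bounded by an absolute constant, its variance is
$$\sigma_{P_N}^2=\operatorname{Var}\big(\|\ph_N\|_{H^s}^2\big)\les\sum_{|k|\le N}c_k^2\le\Big(\sup_{k}c_k\Big)\sum_{k\in\Z}c_k\les_s L,$$
where $\sup_k c_k\les_s 1$ uses $s<1$ and $\sum_k c_k=\E[\|\ph\|_{H^s}^2]\les_s L$ is exactly \eqref{ph_Hs} (which is where $s<\tfrac12$ is used). Applying \eqref{hypercontractivity} with $d=2$ to $\sigma_{P_N}^{-1}P_N$ gives $\E\big[\exp\big(c_2\sigma_{P_N}^{-1}|P_N|\big)\big]\le 2$, and since $t^q\le C_q e^{c_2 t}$ for $t\ge 0$ we deduce $\E[|P_N|^{q}]\les_q \sigma_{P_N}^{q}\les_{s,q} L^{q/2}$.

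Finally, since $q\ge 1$, the triangle inequality in $L^q$ yields
$$\big\|\,\|\ph_N\|_{H^s}^2\,\big\|_{L^q}\le \E[\|\ph_N\|_{H^s}^2]+\|P_N\|_{L^q}\les_s L+L^{1/2}\les_s L,$$
uniformly in $N$, i.e.\ $\E[\|\ph_N\|_{H^s}^{2q}]\les_{s,q}L^{q}$. Because $\|\ph_N\|_{H^s}^2=\sum_{|k|\le N}c_k|g_k|^2$ increases to $\|\ph\|_{H^s}^2$, the monotone convergence theorem gives $\E[\|\ph\|_{H^s}^{2q}]=\lim_N\E[\|\ph_N\|_{H^s}^{2q}]\les_{s,q}L^{q}=L^{p/2}$, as desired. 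I do not expect a genuine obstacle here; the only point requiring care is that the variance bound $\sigma_{P_N}^2\les_s L$ holds \emph{uniformly in the truncation} $N$, which is what makes the limiting argument work, and that this uses \eqref{ph_Hs} precisely in the regime $s<\tfrac12$. The reduction to finite dimensions is needed only because \eqref{hypercontractivity} is stated on $\R^n$.
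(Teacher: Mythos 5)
Your proof is correct and is essentially the paper's argument: both truncate to $\ph_N$, apply the hypercontractivity estimate \eqref{hypercontractivity} with $d=2$ to the degree-two polynomial built from $\|\ph_N\|_{H^s}^2$, and then pass to the limit in $N$ (the paper via Fatou, you via monotone convergence). The only variation is cosmetic: the paper applies \eqref{hypercontractivity} directly to the uncentered polynomial $\|\ph_N\|_{H^s}^2$ with $\sigma^2=\E[\|\ph_N\|_{H^s}^4]\les L^2$, while you first subtract the mean, getting the sharper $\sigma_{P_N}^2=\operatorname{Var}(\|\ph_N\|_{H^s}^2)\les L$, and then add the mean $\E[\|\ph_N\|_{H^s}^2]\les L$ back by the triangle inequality in $L^q$; since the mean dominates, both routes land on the same $L^{p/2}$ bound, so your extra precision in the variance is not exploited. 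Also note your separate treatment of $p\le 2$ via Jensen is unnecessary: the elementary inequality $u^p\les_p e^{c_2 u}$ used at the end holds for all $p>0$, so the hypercontractivity argument already covers the full range (as the paper's version does).
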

\begin{proof}
Using Minkowski's inequality first and then \eqref{ph_Hs} we get
\begin{align*}
\Big( \E[\|\ph\|_{H^s}^p]  \Big)^{\frac{2}{p}}&=\E\Big[ \Big(\sum_{k \in \Z} \frac{|g_k|^2}{\Big(\alpha + 4\pi^2\big(\frac k L\big)^2\Big)^{1-s}}\Big)^\frac{p}{2}\Big]^{\frac{2}{p}} \\
&\leq  \sum_{k \in \Z} \frac{ \E\big[|g_k|^p]^{\frac{2}{p}}}{\Big(\alpha + 4\pi^2\big(\frac k L\big)^2\Big)^{1-s}} \\
&\lesssim   \sum_{k \in \Z} \frac{ 1}{\Big(\alpha + 4\pi^2\big(\frac k L\big)^2\Big)^{1-s}}  \lesssim L.
\end{align*}
\end{proof}

\begin{proposition}\label{BD0}
Let $I \subseteq [-L/2,L/2]$ be an interval, and let $M > 0$. Then there exists a constant $c = c(\alpha) > 0$ such that 
\begin{equation}\label{LDE}
\P\Big(\Big\{ \Big|\int_I |\ph(x)|^2 dx - \frac{|I|}{2\sqrt{\alpha}}\Big| > M\Big\}\Big) \les \exp\Big( -c \frac{M}{|I|^\frac12}\Big).
\end{equation}
\end{proposition}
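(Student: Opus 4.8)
The plan is to write $X:=\int_I|\ph(x)|^2\,dx$ as a quadratic functional of the Gaussian field $\ph$, compute its mean and variance, and then deduce the tail bound from the hypercontractivity estimate \eqref{hypercontractivity} with $d=2$, in exactly the manner used in the proof of \eqref{ph_Hsp}. Representing $\ph=(\alpha-\Delta)^{-1/2}\xi$ for a circularly symmetric complex white noise $\xi$ on $L\T$, we have $X=\langle\xi,K\xi\rangle$ with $K:=(\alpha-\Delta)^{-1/2}\1_I(\alpha-\Delta)^{-1/2}$ a positive trace-class operator; equivalently $X$ is, in law, $\sum_j\kappa_j|g_j|^2$ with $\{\kappa_j\}$ the eigenvalues of $K$ and the $g_j$ i.i.d.\ standard complex Gaussians. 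In particular $X-\E X$ is a centred polynomial of degree $2$ in i.i.d.\ real Gaussians with $\sigma^2:=\mathrm{Var}(X)$, so \eqref{hypercontractivity} (after a truncation/Fatou passage) will give $\E\big[\exp(c\,|X-\E X|/\sigma)\big]\le 2$, hence $\P(|X-\E X|>M)\lesssim\exp(-cM/\sigma)$; it then remains to show $\sigma^2\lesssim_\alpha|I|$ and to replace $\E X$ by $\tfrac{|I|}{2\sqrt\alpha}$.

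For the moments, let $G_L$ be the Green's function of $\alpha-\Delta$ on $L\T$, $G_L(z)=\tfrac1L\sum_{k\in\Z}(\alpha+4\pi^2(k/L)^2)^{-1}e^{2\pi ikz/L}$, which by Poisson summation equals $\sum_{m\in\Z}\tfrac1{2\sqrt\alpha}e^{-\sqrt\alpha|z-mL|}$. Then $\E X=|I|\,G_L(0)$, and Wick's theorem for circularly symmetric complex Gaussians (for which $\E[\ph(x)\ph(y)]=0$ and $\E[\ph(x)\overline{\ph(y)}]=G_L(x-y)$) gives $\E[X^2]=|I|^2G_L(0)^2+\int_I\int_I G_L(x-y)^2\,dx\,dy$, so that
\[ \sigma^2=\int_I\int_I G_L(x-y)^2\,dx\,dy\;\le\;|I|\,\|G_L\|_{L^2(L\T)}^2\;=\;|I|\cdot\tfrac1L\sum_{k\in\Z}\big(\alpha+4\pi^2(k/L)^2\big)^{-2}\;\lesssim_\alpha\;|I|, \]
where the final bound is a Riemann-sum estimate, uniform in $L$, for $\int_{\R}(\alpha+4\pi^2u^2)^{-2}\,du<\infty$ (the summand being decreasing in $|k|$). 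This is the one estimate that must be done carefully so as to be uniform in both $L$ and $I$; the rest of the argument is soft.

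To run the hypercontractivity step rigorously one works with the truncations $\ph_N$ of \eqref{phN}, sets $X_N:=\int_I|\ph_N|^2$, and observes that $\mathrm{Var}(X_N)=\|P_NKP_N\|_{HS}^2\le\|K\|_{HS}^2=\sigma^2$, since restriction by the spectral projection $P_N$ does not increase the Hilbert--Schmidt norm. As $\Law(\ph_N)$ lives on a finite-dimensional space, \eqref{hypercontractivity} with $d=2$ applied to $X_N-\E X_N$ gives $\E[\exp(c_\alpha|X_N-\E X_N|/\sigma)]\le 2$ uniformly in $N$; since $X_N\to X$ a.s.\ and $\E X_N\to\E X$, Fatou yields $\E[\exp(c_\alpha|X-\E X|/\sigma)]\le 2$, and with $\sigma\lesssim_\alpha|I|^{1/2}$ and Markov's inequality we obtain $\P(|X-\E X|>M)\lesssim\exp(-c_\alpha M/|I|^{1/2})$. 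Passing through the variance is needed because $X_N$ is \emph{not} monotone in $N$: the cutoff $\1_I$ destroys the orthogonality of the Fourier modes.

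Finally, $\E X-\tfrac{|I|}{2\sqrt\alpha}=|I|\big(G_L(0)-\tfrac1{2\sqrt\alpha}\big)=\tfrac{|I|}{2\sqrt\alpha}\big(\coth(\sqrt\alpha L/2)-1\big)\ge 0$ is, since $|I|\le L$, at most $C(\alpha)\,Le^{-\sqrt\alpha L}$, hence bounded by an absolute constant $C_0=C_0(\alpha)$ (and in fact tends to $0$). A short case distinction transfers the bound: for $M>2C_0$ one has $\{|X-\tfrac{|I|}{2\sqrt\alpha}|>M\}\subseteq\{|X-\E X|>M/2\}$, while for $M\le 2C_0$ one checks that $M/|I|^{1/2}$ is bounded by an $\alpha$-dependent constant (so the claim holds trivially) outside the regime already handled by $\{|X-\E X|>M'\}$ with $M'\gtrsim M$. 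The main obstacle is thus the uniform-in-$(L,I)$ variance bound $\sigma^2\lesssim_\alpha|I|$ together with the non-monotone truncation; Wick's theorem, the Fatou passage, and the $\E X$ versus $\tfrac{|I|}{2\sqrt\alpha}$ bookkeeping are routine.
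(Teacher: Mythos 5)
Your proposal is correct and follows essentially the same route as the paper: use the hypercontractivity estimate \eqref{hypercontractivity} with $d=2$ to reduce the tail bound to a second-moment estimate, establish that estimate via the covariance kernel $G_L(x-y)=\E[\ph(x)\overline{\ph(y)}]$ together with $\|G_L\|_{L^2(L\T)}^2\lesssim_\alpha 1$ uniformly in $L$, and pass through the finite-rank truncations $\ph_N$ with a Fatou argument. Two remarks on where you diverge from the paper, both cosmetic but worth noting. First, your detour through $\E X$ and the subsequent transfer to $\tfrac{|I|}{2\sqrt\alpha}$ is avoidable: since the quantity $\sigma_p^2$ in \eqref{hypercontractivity} is the \emph{second moment} $\int|p|^2\,d\nu$ rather than the variance, the lemma applies directly to the non-centred polynomial $p=\int_I|\ph|^2-\tfrac{|I|}{2\sqrt\alpha}$. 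The paper exploits this, so that the bias term $\bigl|\int_I\bigl(\E|\ph(x)|^2-\tfrac1{2\sqrt\alpha}\bigr)dx\bigr|^2$ is simply one of the two summands in $\sigma_p^2$ and no case distinction is needed. Second, as stated, your final case distinction has a small gap: the assertion that $M/|I|^{1/2}$ is bounded whenever $M\le 2C_0$ fails when $|I|$ is small. The ingredient that repairs it is one you already have, namely $|\E X-\tfrac{|I|}{2\sqrt\alpha}|=|I|\bigl(G_L(0)-\tfrac1{2\sqrt\alpha}\bigr)\lesssim_\alpha |I|/L\lesssim_\alpha |I|^{1/2}$ (using $|I|\le L$, so the bias is genuinely $O(|I|^{1/2})$ rather than merely $O(1)$), after which the two cases $M\ge 2\beta$ and $M<2\beta$ with $\beta:=|\E X-\tfrac{|I|}{2\sqrt\alpha}|$ close cleanly; in the second case $M/|I|^{1/2}<2\beta/|I|^{1/2}\lesssim_\alpha 1$, so the claimed bound holds trivially.
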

\begin{proof}

By \eqref{hypercontractivity} and a simple approximation argument 
it is enough to show that 
\begin{equation*}
\E\Big|\int_I |\ph(x)|^2 dx - \frac{|I|}{2\sqrt{\alpha}}\Big|^2 \les |I|. 
\end{equation*}
We write 
\begin{align}
&\E\Big|\int_I |\ph(x)|^2 dx - \frac{|I|}{2\sqrt{\alpha}}\Big|^2  \notag \\
=&\ \E\Big|\int_I |\ph(x)|^2  - \E[|\ph(x)|^2 dx \Big|^2 \label{LDEmain} \\
&\ + \Big|\int_I \E |\ph(x)|^2  - \frac{1}{2\sqrt{\alpha}} dx \Big|^2 \label{LDEerror}.
\end{align}
From \eqref{ph_pointwise} it follows easily that 
$$\eqref{LDEerror} \les \frac{|I|^2}{L^2} \les |I|, $$
so we focus on \eqref{LDEmain}. Define the function 
\begin{equation}\label{covariance}
K(z) = \E[\ph(x+z) \overline{\ph(x)}] = \frac{1}{\sqrt{L}} \sum_{k\in\Z} \frac{1}{\alpha +  4\pi^2\big(\frac{k}{L}\big)^2} \frac{e^{i \frac{2\pi kx}{L}}}{\sqrt L}.
\end{equation}
Then, exploiting the fact that $\ph$ is a Gaussian random variable, it is easy to check that 
$$\eqref{LDEmain} = \int_{I\times I} 2|K(x-y)|^2 dx dy. $$
Therefore, by Plancherel, we obtain that 
\begin{align*}
\eqref{LDEmain} \le 2 |I| \|K\|_{L^2}^2 = 2 |I| \cdot  \frac{1}{L} \sum_{k\in\Z} \frac{1}{\Big(\alpha +  4\pi^2\big(\frac{k}{L}\big)^2\Big)^2} \les |I|.
\end{align*}

\end{proof}
\begin{proposition}\label{BDmain}
The measure $\mu_L$ is supported on $C([-L/2,L/2])$. Moreover, as $L \to \infty$, we have that $\mu_L \to \mu_{OU}$ weakly as probability measures over $C(\R)$ equipped the compact-open topology.
\end{proposition}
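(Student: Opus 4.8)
The plan is to derive both assertions from Gaussianity together with a modulus‑of‑continuity bound for the covariance kernel that is uniform in $L$. Writing $K_L(z)=\E[\ph(x+z)\overline{\ph(x)}]=\frac1L\sum_{k\in\Z}\frac{e^{2\pi i kz/L}}{\alpha+4\pi^2(k/L)^2}$ for the covariance of $\mu_L$ (cf.\ \eqref{covariance}), the first step is to show that, uniformly in $L$, $K_L(0)\les 1$ and $K_L(0)-\Re K_L(z)=\frac1L\sum_{k\in\Z}\frac{1-\cos(2\pi kz/L)}{\alpha+4\pi^2(k/L)^2}\les|z|$ for $|z|\le 1$; this I would get by splitting the sum at $|k|\sim L/|z|$, using $1-\cos\theta\le\theta^2/2$ together with $\frac{4\pi^2(k/L)^2}{\alpha+4\pi^2(k/L)^2}\le1$ on the low frequencies, and $1-\cos\theta\le 2$, $\alpha+4\pi^2(k/L)^2\gtrsim(k/L)^2$ on the high frequencies. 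Since $\ph$ is a centered Gaussian field, $\ph(x)-\ph(y)$ is then a centered complex Gaussian of variance $\les|x-y|$, so Wick's theorem gives $\E|\ph(x)-\ph(y)|^{2m}\les_m|x-y|^m$ and $\E|\ph(x)|^{2m}\les_m1$ for every $m\in\N$, with constants independent of $L$ (the second bound also follows from \eqref{ph_pointwise}).

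Granting this, the first claim is immediate: for $m$ large the Kolmogorov continuity criterion on $[-L/2,L/2]$ (endpoints identified) produces a modification of $\ph$ with paths in $C^\theta$ for every $\theta<\frac12$, so $\mu_L$ is supported on $C([-L/2,L/2])$.

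For the limit statement I would argue by tightness plus identification of the limit. For tightness, fix $n$ and restrict to $L\ge 2n$: the uniform moment bounds above feed into Kolmogorov's theorem to give $\E\big[\|\ph\|_{C^{1/4}([-n,n])}^p\big]\le C_{n,p}$ uniformly in $L$, for every $p<\infty$. Given $\eps>0$ one picks $C_n=C_n(\eps)$ so that, by Chebyshev, $\mu_L(\|\ph\|_{C^{1/4}([-n,n])}>C_n)\le 2^{-n}\eps$ for all large $L$; then $\{u:\|u\|_{C^{1/4}([-n,n])}\le C_n\ \forall n\}$ is a compact subset of $C(\R)$ with the compact‑open topology (Arzel\`a--Ascoli plus a diagonal argument) of $\mu_L$‑measure $\ge 1-\eps$ for all large $L$, so $\{\mu_L\}$ is tight and Prokhorov applies — this is exactly the tightness mechanism already used in the proof of Theorem \ref{thm:3}. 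To identify a weak limit point $\nu$, I would note that, evaluation maps $u\mapsto(u(x_1),\dots,u(x_d))$ being continuous on $C(\R)$, $\nu$ is again a centered circularly symmetric complex Gaussian measure, determined by the limits of the covariances $K_L(x_i-x_j)$ and the pseudo‑covariances $\E[\ph(x_i)\ph(x_j)]$. The latter vanish identically since the Fourier coefficients $g_k$ are circularly symmetric, while $K_L(z)=\frac1L\sum_{k\in\Z}f(k/L)$ with $f(\xi)=\frac{e^{2\pi i\xi z}}{\alpha+4\pi^2\xi^2}$ is a mesh‑$\frac1L$ Riemann sum for the absolutely convergent integral $\int_\R f(\xi)\,d\xi=\frac1{2\sqrt\alpha}e^{-\sqrt\alpha|z|}$, so $K_L(z)\to\frac1{2\sqrt\alpha}e^{-\sqrt\alpha|z|}$ for each fixed $z$. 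These are precisely the covariance and pseudo‑covariance of $\mu_{OU}$, so $\nu=\mu_{OU}$; since every limit point equals $\mu_{OU}$ and the family is tight, $\mu_L\to\mu_{OU}$ weakly on $C(\R)$.

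The only genuinely technical ingredient is the uniform‑in‑$L$ modulus estimate for $K_L$ via the frequency splitting; once it is in place, the rest reduces to routine Gaussian process theory — Kolmogorov's criterion for path continuity and for tightness, and convergence of finite‑dimensional distributions for the identification of the limit.
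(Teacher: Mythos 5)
Your proof is correct and follows essentially the same strategy as the paper: establish a uniform-in-$L$ modulus estimate on the covariance kernel, deduce Gaussian moment bounds for increments, obtain tightness in $C(\R)$ via a Kolmogorov/Sobolev-embedding path-regularity argument, and then identify the unique limit point by convergence of the covariance $K_L(z)$ to the Ornstein--Uhlenbeck covariance. The only cosmetic differences are that you invoke Kolmogorov's continuity criterion where the paper uses a Garsia--Rodemich--Rumsey-type Sobolev embedding (these are interchangeable here), and you additionally spell out the vanishing of the pseudo-covariance, which the paper leaves implicit.
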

\begin{proof}
Fix $K > 0$. Recall that by Sobolev embeddings, for any $s>0, p> 1$ with $s > \frac 1p$, we have that  
$$\| \ph \|_{C^{s - \frac{1}{p}}([-K,K])}^p \les |\ph(0)|^p + \int_{-K}^K\int_{-K}^K \frac{|\ph(x)-\ph(y)|^p}{|x-y|^{sp+1}}dxdy.$$
We have that 
$$ \E |\ph(x)-\ph(y)|^2 = \frac{1}{L} \sum_{k\in\Z} \frac{1}{\alpha + 4\pi^2\big(\frac k L\big)^2} |e^{i\frac{2\pi k |x-y|}{L}} - 1|^2 =: H(x-y).$$
It easy to check that $H \ge 0$,   and that for every $0\le \sigma < 1$, 
$$\||z|^{-\sigma} H(z)\|_{L^\infty} \les 1.$$ 
Therefore, by picking $p \gg 1$, $s > \frac 1p $ in such a way that $s + \frac 1p < \frac 12$, we obtain that 
\begin{align*}
\E[\| \ph \|_{C^{s - \frac{1}{p}}([-K,K])}^p] &\les \E |\ph(0)|^p +  \E\Big[\int_{-K}^K\int_{-K}^K \frac{|\ph(x)-\ph(y)|^p}{|x-y|^{sp+1}}dxdy\Big] \\
&=   \E |\ph(0)|^p + \int_{-K}^K\int_{-K}^K \frac{\E|\ph(x)-\ph(y)|^p}{|x-y|^{sp+1}}dxdy \\
&\les 1 +  \int_{-K}^K\int_{-K}^K \frac{|H(z)|^\frac p2}{|x-y|^{sp+1}}dxdy \\
&\les 1 + K^2.
\end{align*}
This shows that the sequence $\{\mu_L\}$ is tight in $C(\R)$, so by Skorohod's theorem, up to subsequences it has a weak limit $\mu_\infty$. 
In order to conclude the proof, it is enough to show that $\mu_\infty = \mu_{OU}$ (irrespective of the subsequence chosen).
Since $\mu_\infty$ is a limit of centered Gaussian measure, it is also a centered Gaussian measure, so it is enough to check its covariance structure. But by \eqref{covariance}, we have that 
$$ \E[\ph(x+z) \overline{\ph(x)}] = K(z) \to \int_{-\infty}^{\infty} \frac{e^{i\xi x}}{\alpha + 4\pi^2|\xi|^2}d\xi, $$
which is exactly the covariance of $\mu_{OU}$.
\end{proof}

\subsection*{Conflict of interest statement:} The authors have no conflict of interest.

\subsection*{Data availability statement:} No datasets were generated or analysed.

\bibliographystyle{abbrv}
\bibliography{phip}

\end{document}